\declaretheoremstyle[
	spaceabove=6pt, spacebelow=6pt,
	headfont=\normalfont\bfseries,
	notefont=\mdseries, notebraces={\kern-0.7ex \textbf{:} }{},
	bodyfont=\normalfont,
	% postheadspace=\newline
]{theoremstyle}
\declaretheoremstyle[
	spaceabove=6pt, spacebelow=6pt,
	headfont=\normalfont\bfseries,
	notefont=\mdseries, notebraces={\kern-0.7ex \textbf{:} }{},
	bodyfont=\normalfont,
	% postheadspace=\newline,
	qed=$\blacksquare$
]{definitionstyle}
\declaretheorem[style=theoremstyle]{theorem}
\declaretheorem{definition}[
style=definitionstyle,
sibling=theorem
]
\declaretheorem{lemma, proposition, corollary}[
style=theoremstyle,
sibling=theorem
]
\author{Borja Sierra Miranda \and Thomas Studer}
\date{}
\title{Cut elimination for a non-wellfounded system for the master modality\footnote{Research supported by the Swiss National Science Foundation project 200021\_214820.}}
\newcommand\nec{\ensuremath\Box}
\newcommand\necm{\ensuremath\Box^+}
\newcommand\dnecm{\ensuremath\boxdot^+}
\newcommand\NN{\mathbb{N}}
\begin{document}

\maketitle
% \tableofcontents

\begin{abstract} 
In~\cite{previous}, we provided a method for eliminating cuts in
non-wellfounded proofs with a local-progress condition, these being the simplest
kind of non-wellfounded proofs.
The method consisted of splitting the proof into nicely behaved fragments.
This paper extends our method to proofs based on simple trace conditions. 
The main idea is to split the system with  the trace condition into infinitely
many local-progress calculi that together are equivalent to the original
trace-based system. 
This provides a cut elimination method using only basic tools of structural
proof theory and corecursion, which is needed due to the non-wellfounded
character of proofs.
We will employ the method to obtain syntactic cut elimination for \(K^+\), a system
of modal logic with the master modality.
\end{abstract} 
\section*{Introduction}

Cut elimination in non-wellfounded/cyclic proof theory is currently an active topic of research and has been previously addressed by other researchers with different tools.
For example 
via finite approximations in \cite{acclavio2024infinitarycuteliminationfiniteapproximations},
via cyclic proofs in \cite{cut-cyclic},
via multicuts in \cite{afshari2024demystifyingmu},
via runs in \cite{das_et_al:LIPIcs.CSL.2018.19},
via infinitary rewriting in \cite{10.1007/978-3-031-43513-3_12},
via ultrametric spaces in \cite{ShamkanovwGrz}, \cite{ShamkanovGrz} and \cite{shamkanov2023structuralprooftheorymodal},
among others.
First proof-theoretic results on establishing weakening admissibility in cyclic calculi for linear temporal logic can be found in~\cite{KokkinisStuder+2016+171+192}.
The richness of methods is a witness of the hardness of the problem.
We believe this hardness arises, among other things, from two fundamental facts:
\begin{enumerate}
  \item In principle non-wellfounded proofs do not have a notion of height associated to them.
  This means that we cannot do recursion over them.
  \item Verifying that a tree is a proof in the finitary setting just requires to check a ``local'' condition globally, in particular that every node is the conclusion of a rule instance.
  In the non-wellfounded setting a global condition on the branches is added, proving that this condition is preserved after the process of cut elimination is the principal headache. 
\end{enumerate}

In \cite{previous} we defined a method to establish cut elimination and,  more generally,  to make proof translations between local progress sequent calculi.
These are sequent calculi allowing non-wellfounded proofs such that any infinite branch goes through some rules infinitely many times.
The method is based in splitting the proof in two parts, one of them amenable to tools of finitary proof theory such as recursion.
The other part needs to be treated corecursively, but this treatment is generic and does not need to be reimplemented for each sequent calculus.
The combination of these two facts brings really close the study of local progress calculi and finitary calculi, simplifying the proof theory of the first.
%The method is based in splitting the proof in adequate parts, each of which being amenable to the methods of recursion or corecursion.

We left open the question whether this simplication can be adapted to other non-wellfounded systems with more complex branch conditions.
In this paper we answer this question positively providing a novel cut elimination method for a non-wellfounded system for the master modality (\(\text{K}^+\)).
% Our main contribution is a new method to split a non-wellfounded calculus in infinitely many local progress calculi.
Cut elimination for \(\text{K}^+\) was already proved by Shamkanov
in~\cite{shamkanov2023structuralprooftheorymodal} by the method of continuous
cut elimination via ultrametric spaces.
We replace the use of ultrametric spaces by partitions, a simplification which will make it possible to study further levels of the recursive-corecursive iteration scheme occuring naturally in non-wellfounded proof theory.

\setcounter{section}{-1}
\section{Preliminaries}
\subsection*{Master modality}
We use this section to introduce some concepts that are needed for the rest of the paper.

\(\text{K}^+\) is a modal logic, we will work with its formulation in the modal language with connectives \(\bot, \to\) and unary modalities \(\nec, \necm\).
The operator \(\necm\) is what we call the master modality.
We use usual Kripke models for the semantics of this logic.
If \(\mathbf{M} = (W,R, V)\) is a Kripke model, then the semantics for box and master formulas is as follows:
\begin{align*}
  &\mathbf{M}, w \vDash \nec \phi \text{ iff for any \(v\) such that \(wRv\) we
	have that \(\mathbf{M}, v \vDash \phi\)}, \\
  &\mathbf{M}, w \vDash \necm \phi \text{ iff for any \(v\) such that \(wR^+ v\)
	we have that \(\mathbf{M}, v \vDash \phi\)}, 
\end{align*}
where \(R^+\) is the transitive closure of \(R\), i.e.\ \(w R^+ v\) iff there is a (non-empty) sequence \(w_0, \ldots, w_n\) such that \(w = w_0 R \cdots R w_n = v\).

Given a multiset of formulas \(\Gamma\) we will write \(\dnecm \Gamma\) to mean \(\Gamma, \necm\Gamma\).

\begin{definition}
  We define the Hilbert system \(\text{HK}^+\) as the Hilbert system over the language of the master modality with axioms:
  \begin{enumerate}
    \item \(\phi \to (\psi \to \phi)\);
    \item \((\phi \to (\psi \to \chi)) \to ((\phi \to \psi) \to (\phi \to \chi))\);
    \item \(((\phi \to \bot) \to \bot) \to \phi\);
    \item \(\nec(\phi \to \psi) \to (\nec \phi \to \nec \psi)\);
    \item \(\necm(\phi \to \psi) \to (\necm \phi \to \necm \psi)\);
    \item \(\necm \phi \to \nec \phi\);
    \item \(\necm \phi\to \nec\necm \phi \);
    \item \(\nec \phi \to ( \necm (\phi \to \nec \phi) \to \necm \phi)\).
  \end{enumerate}
  And the rules of modus ponens: and necessitation for \(\necm\):
	\[ 
		\AxiomC{\(\phi\)}
		\AxiomC{\(\phi \to \psi\)}
		\RightLabel{MP}
		\BinaryInfC{\(\psi\)}
		\DisplayProof
		\qquad
		\AxiomC{\(\phi\)}
		\RightLabel{Nec}
		\UnaryInfC{\(\necm \phi\)}
		\DisplayProof
	\]	
\end{definition}

As mentioned in~\cite{shamkanov2024realizationtheoremmodallogic}, \(\text{HK}^+\) is sound and weakly complete with respect to the \(\text{K}^+\) semantics.

% TODO: {\color{blue} MENTION REFERENCES FOR THE MASTER MODALITY}

\subsection*{Shamkanov's non-wellfounded sequent calculus for \(\text{K}^+\)}

In~\cite{shamkanov2024realizationtheoremmodallogic} Shamkanov provided a non-wellfounded Gentzen calculus for \(\text{K}^+\), let us denote it as \(\text{G}^\infty\text{K}^+\) and its version with cut as \(\text{G}^\infty\text{K}^+ + \textsf{Cut}\).
In \(\text{G}^\infty \text{K}^+\) sequents are ordered pairs \((\Gamma, \Delta)\) where \(\Gamma, \Delta\) are finite multisets of formulas.
These are usually denoted as \(\Gamma \Rightarrow \Delta\).
Proofs are non-wellfounded trees, i.e.\ trees that are allowed to have infinite length paths from the root (infinite branches).
It is easy to show that \(\text{G}^\infty \text{K}^+\) is sound and that if \(\text{HK}^+ \vdash \phi\) then \(\text{G}^\infty\text{K}^+ + \text{Cut} \vdash {\Rightarrow \phi}\) (by induction on the size of the Hilbert proof and some inversion principles of \(\text{G}^\infty\text{K}^+ + \text{Cut}\)), so \(\text{G}^\infty\text{K}^+ + \text{Cut}\) is also weakly complete.
Then soundness of \(\text{G}^\infty\text{K}^+\) is clear, (weak) completness will be a consequence of the cut elimination procedure we are going to describe in the paper.\footnote{Note that in~\cite{shamkanov2024realizationtheoremmodallogic} the weak completeness of \(\text{G}^\infty \text{K}^+\) (completeness in the set of theorems but not necessarily in the consequence relation given by the logic) is already proven via refutation trees.
Then one could eliminate cuts via a semantical argument, since (weak) completeness was shown for the cut-free version of the calculus.
}

As usual with non-wellfounded proofs a condition limiting the possible shape of the infinite branches needs to be added, for the sake of soundness.
This condition is simplified by turning to another non-wellfounded Gentzen calculus, which we will denote as \(\text{G}^\infty_\ell \text{K}^+\), whose sequents are \emph{annotated}.
Let us define it precisely.

An \emph{annotation} is just a formula or the symbol \(\circ\), meaning that no formula is annotated.
Sequents in \(\text{G}^\infty_\ell \text{K}^+\) are triples \((\Gamma, s, \Delta)\), where \(\Gamma, \Delta\) are finite multisets of formulas and \(s\) is an annotation.
An additional condition is imposed on sequents, if the annotation of the sequent is a formula \(\phi\) then \(\necm \phi\) must occur in \(\Delta\).
A sequent \((\Gamma, s, \Delta)\) is denoted by \(\Gamma \Rightarrow_s \Delta\).
In the case the annotation is a formula \(\phi\), we say that the formula is in
\emph{focus} and if it is \(\circ\), we say that the sequent is \emph{unfocused}.
Then the rules of \(\text{G}^\infty_\ell \text{K}^+\) are

	\begin{align*}
		\AxiomC{}
		\RightLabel{Ax}
		\UnaryInfC{\(\Gamma, p \Rightarrow_s p, \Delta\)}
		\DisplayProof
		 &  &
		\AxiomC{}
		\RightLabel{Ax-\(\bot\)}
		\UnaryInfC{\(\Gamma, \bot \Rightarrow_s \Delta\)}
		\DisplayProof
		\\
		 &  & \\
		\AxiomC{\(\Gamma \Rightarrow_s \Delta, \phi\)}
		\AxiomC{\(\Gamma, \psi \Rightarrow_s \Delta\)}
		\RightLabel{\(\to\)L}
		\BinaryInfC{\(\Gamma, \phi \to \psi \Rightarrow_s \Delta\)}
		\DisplayProof
		 &  &
		\AxiomC{\(\Gamma, \phi \Rightarrow_s \psi, \Delta\)}
		\RightLabel{\(\to\)R}
		\UnaryInfC{\(\Gamma \Rightarrow_s \phi \to \psi, \Delta\)}
		\DisplayProof
	\end{align*}

	\[
		\AxiomC{\(\Gamma, \dnecm \Pi \Rightarrow_\circ \phi\)}
		\RightLabel{\(\nec\)}
		\UnaryInfC{\(\Sigma, \nec \Gamma, \necm \Pi \Rightarrow_s \nec \phi, \Delta\)}
		\DisplayProof
	\]

	\[
		\AxiomC{\(\Gamma, \dnecm \Pi \Rightarrow_\circ \phi\)}
		\AxiomC{\(\Gamma, \dnecm \Pi \Rightarrow_\phi \necm \phi\)}
		\RightLabel{\(\necm\)}
		\BinaryInfC{\(\Sigma, \nec \Gamma, \necm \Pi \Rightarrow_s \necm \phi, \Delta\)}
		\DisplayProof
	\]

  The condition on infinite branches is then that any branch has a suffix that always has the same formula \(\phi\) in focus.\footnote{
	This particular system is not defined by Shamkanov but is easily infered from
	the systems he defines.
	In particular, in~\cite{shamkanov2024realizationtheoremmodallogic} he defines
	this system with extra annotations since the purpose is to prove a
	realization theorem of justification logic.
	In~\cite{shamkanov2023structuralprooftheorymodal} he defines a system of
	\(\text{K}^+\) with the same annotations but the sequents have an extra
	(possibly infinite) set of formulas in order to establish a connection with an
	\(\omega\)-rule system for \(\text{K}^+\).
  In any case, the definition of the system here is to have the exact and
	precise definition for our purpose, but it is not original in any sense.}

We are allowed to move from studying cut elimination in \(\text{G}^\infty \text{K}^+\) to study cut elimination in \(\text{G}^\infty_\ell \text{K}^+\) since it can be shown that
\[ 
  \text{G}^\infty \text{K}^+ \vdash \Gamma \Rightarrow \Delta \text{ iff } \text{G}^\infty_\ell \text{K}^+ \vdash \Gamma \Rightarrow_\circ \Delta.
\]
and the same if we have the cut rule in both.
For the right to left we just need to delete the annotations and notice that the branch condition of \(\text{G}^\infty_\ell \text{K}^+\) implies the branch condition of \(\text{G}^\infty \text{K}^+\).
For the left to right notice that by the shape of the rules there is only one possible way of annotating a proof in \(\text{G}^\infty \text{K}^+\) such that the root is annotated with \(\circ\).
It is straightforward to see that the branch condition of \(\text{G}^\infty \text{K}^+\) will imply that the branch condition of \(\text{G}^\infty_\ell \text{K}^+\) is fulfilled in the annotated version of the proof.

\subsection*{Cut elimination for local-progress systems}

As we mentioned in the introduction, we want to use our previously defined tools of~\cite{previous} to prove cut elimination.
Let us briefly introduce these tools here.

We work with local-progress non-wellfounded sequent calculi, let us fix an arbitrary one and call it \(\mathcal{C}\).
As usual, \(\mathcal{C}\) has a set \(\mathcal{R}\) whose elements are sequent rules, which we call the \emph{rules of\/ \(\mathcal{C}\)}.
A pre-proof in \(\mathcal{C}\) is a (possibly non-wellfounded) tree whose nodes are labelled by an ordered pair consisting of a sequent and a rule from \(\mathcal{R}\) such that for any node with sequent \(S\) and rule \(R\) if \(S_0, \ldots, S_{n-1}\) are the sequents of its successors (in order) then \((S_0, \ldots, S_{n-1}, S)\) must be a rule instance of \(R\).

In addition, \(\mathcal{C}\) has a function \(L\) called the \emph{local progress function}.
Given a rule instance \(r\) with premises \(S_0, \ldots, S_{n-1}\) and conclusion \(S\) of some rule \(R \in \mathcal{R}\), we have that \(L_R(S_0, \ldots, S_{n-1}, S) \subseteq \{0, \ldots, n-1\}\).\footnote{We are denoting \(L(R,S_0,\ldots,S_{n-1},S)\) as \(L_R(S_0,\ldots,S_{n-1},S)\). In other words, progress does not only depend on the rule instance but also on the exact rule applied, as in principle to different rules can share some rule instances.}
We interpret this function as follows.
Imagine we have a pre-proof in
\(\mathcal{C}\) and \(w\) is a node with \(n\) successors.
The node \(w\) is labelled with the sequent \(S\) and rule \(R\) and each of the
successors is labelled with the sequent \(S_0, \ldots, S_{n-1}\), respectively.
Then we will say that from \(w\) to its \(i\)-th successor there is
\emph{progress} iff \(i \in L_R(S_0, \ldots, S_{n-1}, S)\).
A \emph{proof} in \(\mathcal{C}\) is a pre-proof such that in any infinite
branch there is infinitely often progress from a node in the branch to its
successor in the branch.

When defining local progress system it will be usual to describe the rules as
\[ 
  \AxiomC{\(\mathcal{S}_0\)}
  \AxiomC{\(\cdots\)}
  \AxiomC{\(\mathcal{S}_{n-1}\)}
  \RightLabel{R}
  \TrinaryInfC{\(\mathcal{S}\)}
  \DisplayProof
\]
where \(S_0, \ldots, S_{n-1}, S\) are sequent-schemes.
This defines the rule \(R\) whose rules instances are the tuples \((S_0, \ldots, S_{n-1}, S)\) where \(S_i\) is an instantiation of \(\mathcal{S}_i\) and \(S\) is an instantiation of \(\mathcal{S}\).
In addition we will say that progress only occurs at the \(i_0, \ldots, i_k\)-th premises of \(R\) to mean that \(L_R(S_0, \ldots, S_{n-1}, S) = \{i_0, \ldots, i_k\}\), where \((S_0, \ldots, S_{n-1}, S)\) is any rule instance of \(R\).
In case we do not mention anything about progress in a rule \(R\) or we say that in \(R\) there is no progress, we mean that \(L_R(r) = \varnothing\) for any rule instance \(r\) of \(R\).
\footnote{In this paper we will work with a system where progress will occur at most in one premise of the rule instance.}

Any proof in a local-progress system can be divided into (possibly infinitely
many) finite trees, creating a partition of the nodes of the proof.
An element of this partition is called a \emph{local fragment} of the
proof, the idea is that two nodes are in the same local fragment iff there is a
(non-directed) path in the tree that allows to go from one node to the other
without encountering progress.
In particular, note that inside a local fragment no progress can occur and that
all progress in the proof occurs from a leaf of a local fragment to the root
of another local fragment.
The \emph{main local fragment} of a proof is the fragment starting at the root.
The \emph{local height} of the proof will be the height of the main local
fragment, for a proof \(\pi\) this height will be denoted as \(|\pi|\).

The tools that we developed in~\cite{previous} basically allowed us to do
corecursion at the level of local fragments.
In other words, in a corecursive step we will give not only the last node of
the resulting proof after the corecursion, but the whole main local fragment.
For details consult Section 3 in~\cite{previous}.

\section{Definition of \(\alpha\textsf{-}\text{G}^{ \infty }_{ \ell }\text{K}^{ + }_{ s }\)}\label{sec:definition}

	For the Gentzen calculi we are going to define we need to consider the concept of proof with witnesses.
	Usually a Gentzen calculi rule is of the shape
	\[ 
		\AxiomC{\(S_0\)}
		\AxiomC{\(\cdots\)}
		\AxiomC{\(S_{n-1}\)}
		\RightLabel{R}
		\TrinaryInfC{\(S\)}
		\DisplayProof
	\]
	However, we are going to use rules of shape
	\[ 
		\AxiomC{}
		\UnaryInfC{\(\pi_0 \vdash S_{0}\)}
		\AxiomC{\(\cdots\)}
		\AxiomC{}
		\UnaryInfC{\(\pi_{m-1} \vdash S_{m-1}\)}
		\AxiomC{\(S_m\quad \cdots\quad S_{m + n -1}\)}
		\RightLabel{R}
		\QuaternaryInfC{\(S\)}
		\DisplayProof
	\]
  where \(\pi_0, \ldots, \pi_{m-1}\) will be proofs of \(S_0, \ldots, S_{m-1}\), respectively, in other Gentzen calculi(which will be defined beforehand, generating a hierarchy of sequent calculi defined recursively).
	In the proof tree this will look as a node with the label
	\((S, R, \pi_0, \ldots, \pi_{m-1})\) and \(n\) successors, labelled with
	sequents \(S_m\) to \(S_{m+n-1}\) respectively.
	
	In such a rule instance \(\pi_0, \ldots, \pi_{m-1}\) are called
	\emph{witnesses}.
	Given a proof in a system with this kind of rules we will sometimes call it a
	\emph{proof with witnesses} and the witnesses are all the witnesses that occur
	at some of its rule instances.
	The \emph{main global fragment} of such a proof is its structure without
	considering the witnesses.

	When we talk about a subproof in a calculus of this kind we mean a proof
	generated by taking one of its nodes as the root.
	In particular, these nodes always belong to the main global fragment, so
	the witnesses are not subproofs.

	Note that in a proof \(\pi\) we may have that a witness \(\tau\) is a proof with witnesses so it also has witnesses.
	The witnesses of \(\tau\) are not witnesses of \(\pi\) (i.e.\ being a witness is not transitive).
	In this setting we have to understand that a cut-free proof is not only a proof without instances of cut, it is a proof without instances of cut such that all its witnesses have no instances of cut either, neither the witnesses of witnesses,\ldots and so on.

\begin{definition}\label{def:ordinal-system}
	For each ordinal \(\alpha\) and annotation  \(s\),  we define the system \(\alpha\text{-G}^{ \infty }_{ \ell }\text{K}^{ + }_{ s }\) as the local progress system with rules:
	\begin{align*}
		\AxiomC{}
		\RightLabel{Ax}
		\UnaryInfC{\(\Gamma, p \Rightarrow_s p, \Delta\)}
		\DisplayProof
		 &  &
		\AxiomC{}
		\RightLabel{Ax-\(\bot\)}
		\UnaryInfC{\(\Gamma, \bot \Rightarrow_s \Delta\)}
		\DisplayProof
		\\
		 &  & \\
		\AxiomC{\(\Gamma \Rightarrow_s \Delta, \phi\)}
		\AxiomC{\(\Gamma, \psi \Rightarrow_s \Delta\)}
		\RightLabel{\(\to\)L}
		\BinaryInfC{\(\Gamma, \phi \to \psi \Rightarrow_s \Delta\)}
		\DisplayProof
		 &  &
		\AxiomC{\(\Gamma, \phi \Rightarrow_s \psi, \Delta\)}
		\RightLabel{\(\to\)R}
		\UnaryInfC{\(\Gamma \Rightarrow_s \phi \to \psi, \Delta\)}
		\DisplayProof
	\end{align*}

	\[
		\AxiomC{}
		\UnaryInfC{\(\tau \vdash^{ \beta } \Gamma, \dnecm \Pi \Rightarrow_\circ \phi\)}
		\RightLabel{\(\nec\)}
		\UnaryInfC{\(\Sigma, \nec \Gamma, \necm \Pi \Rightarrow_s \nec \phi, \Delta\)}
		\DisplayProof
	\]

	\[
		\AxiomC{}
		\UnaryInfC{\(\tau \vdash^{ \beta } \Gamma, \dnecm \Pi \Rightarrow_\circ \phi\)}
		\AxiomC{\(\Gamma, \dnecm \Pi \Rightarrow_\phi \necm \phi\)}
		\LeftLabel{\(s = \phi\)}
		\RightLabel{\(\necm_f\)}
		\BinaryInfC{\(\Sigma, \nec \Gamma, \necm \Pi \Rightarrow_s \necm \phi, \Delta\)}
		\DisplayProof
	\]

	\[
		\AxiomC{}
		\UnaryInfC{\(\tau_0 \vdash^{ \beta } \Gamma, \dnecm \Pi \Rightarrow_\circ \phi\)}
		\AxiomC{}
		\UnaryInfC{\(\tau_1 \vdash^{ \gamma } \Gamma, \dnecm \Pi \Rightarrow_\phi \necm \phi\)}
		\LeftLabel{\(s \neq \phi\)}
		\RightLabel{\(\necm_u\)}
		\BinaryInfC{\(\Sigma, \nec \Gamma, \necm \Pi \Rightarrow_s \necm \phi, \Delta\)}
		\DisplayProof
	\]

	where \(\beta,\gamma < \alpha\) and \(\tau \vdash^{ \alpha }\Gamma \Rightarrow_{ s' }\Delta\) means that \(\tau\) is a proof of \(\Gamma \Rightarrow_{ s' }\Delta\) in \(\alpha\text{-G}^{ \infty }_{ \ell }\text{K}^{ + }_{ s' }\).
	We note that \(\Gamma, \Delta, \Sigma, \Pi\) are finite multisets of formulas and, in particular, may be empty.

	In instances of the rules \(\to\text{L}\), \(\to\text{R}\), \(\nec\),
	\(\necm_f\) and \(\necm_u\), we define the \emph{principal formula} to be the
	displayed formula in the conclusion.
%
%	For the instances of rules \(\to\text{L}\), \(\to\text{R}\), \(\nec\),
%	\(\necm_f\) and \(\necm_u\) we define the notion of \emph{principal formula}
%	as usual, in \(\to\text{L}\) is the displayed implication formula at the
%	conclusion appearing at the left, in \(\to\text{R}\) is the displayed
%	implication formula at the conclusion appearing at the right, in \(\nec\) is
%	the displayed \(\nec\) formula at the conclusion appearing at the right and in 
%	\(\necm_f, \necm_u\) is the displayed \(\necm\) formula at the conclusion
%	appearing at the right.
	In the modal rules, the multisets \(\Sigma\) and \(\Delta\) will be called
	the \emph{weakening part} of the rule instance.

	Progress occurs at the right premise of the \(\necm_f\) rule. Notice that \(\necm_f\) and \(\necm_u\) have side conditions \(s = \phi\) and \(s \neq \phi\), respectively. This, together with the progress condition implies that if \(s = \circ\) the system is in fact finitary (every branch will be finite).
\end{definition}

As we stated in the preliminaries, a proof in a local-progress calculus can be
split into local fragments.
In particular, each proof in
\(\alpha\text{-G}^\infty_\ell \text{K}^+_s\), has a local fragment starting at
the root and local fragments starting at each right premise of a \(\necm_f\)
rule instance.

The \(s\) in \(\alpha\text{-}\text{G}^\infty_\ell\text{K}^+_s\) determines which
annotation is in focus, so when the annotation in focus is changed we are forced
to go to a witness with a different annotation (see the \(\necm_u\)~rule).

\begin{definition}[Adding cuts]
	We define the cut rule
	\[
		\AxiomC{\(\Gamma \Rightarrow_s \Delta, \chi\)}
		\AxiomC{\(\chi, \Gamma \Rightarrow_s \Delta\)}
		\RightLabel{\(\textsf{Cut}\)}
		\BinaryInfC{\(\Gamma \Rightarrow_s \Delta\)}
		\DisplayProof
	\]

	which makes no progress.

	For each \(\alpha\) we define the systems:
	\begin{enumerate}
		\item We define \(\alpha\text{-G}^\infty_\ell \text{K}^+_s + \textsf{Cut}\) to be the systems as in Definition~\ref{def:ordinal-system} with adding \(\textsf{Cut}\) to the list of rules (in particular, \textsf{Cut} may be used in the main global fragment, in witnesses, in witnesses of witnesses, and so on).
		\item \(\alpha\text{-G}^\infty_\ell \text{K}^+_s + \textsf{wCut}\) are the system as in Definition~\ref{def:ordinal-system} with allowing the witnesses to belong to \(\beta\text{-G}^\infty_\ell \text{K}^+_s + \textsf{Cut}\) for \(\beta < \alpha\) and \textbf{not} adding the \(\textsf{Cut}\) rule.
		\item \(\alpha\text{-G}^\infty_\ell \text{K}^+_s + \textsf{mCut}\) are the systems as in Definition~\ref{def:ordinal-system} with the same witnesses \textbf{but} adding the \(\textsf{Cut}\) rule, i.e.~\(\textsf{Cut}\) may occur in the main global fragment but the witnesses must be cut-free.
	\end{enumerate}

	A proof in the second system is said to have \emph{witnesses-cuts} only and a proof in the third system is said to have \emph{main-cuts} only.
\end{definition}

\begin{definition}
	We say that \(\pi\) is a proof in
	\(\text{G}^{ \infty }_{ \ell }\text{K}^{ + }_{ s }\) iff there is an
	\(\alpha\) such that \(\pi\) is a proof in
	\(\alpha\text{-G}^{ \infty }_{ \ell }\text{K}^{ + }_{ s }\) and similarly iff
	we add \textsf{Cut}, \textsf{mCut} or \textsf{wCut}.
	If \(\pi\) is a proof in \(\text{G}^{ \infty }_{ \ell }\text{K}^{ + }_{ s } + \textsf{Cut}\) we will denote by \(\|\pi\|\) the minimum ordinal \(\alpha\) such that \(\pi\) is a proof in \(\alpha\text{-G}^{ \infty }_{ \ell }\text{K}^{ + }_{ s } + \textsf{Cut}\).
	This ordinal is called the \emph{(ordinal) height} of the proof.
	The (ordinal) height of a node in a proof is the height of the subproof at
	that node.
\end{definition}

Given a cut in a proof we can talk about its \emph{size} (which is the size of its cut formula) and its \emph{height} (which is the Hessenberg sum of the ordinal height of its premises).

The next proposition claims trivial facts about the three possible ways of
adding cut to systems \(\alpha\text{-}\text{G}^\infty_\ell\text{K}^+_s\)
and establishes the connection to system \(\text{G}^\infty_\ell \text{K}^+\).

\begin{proposition}
	We have that
	\begin{enumerate}
		\item If \(\pi\) is a proof in \(\alpha\text{-G}^\infty_\ell\text{K}^+_s\) then it is also a proof in \(\alpha\text{-G}^\infty_\ell\text{K}^+_s + \textsf{wCut}\) and in \(\alpha\text{-G}^\infty_\ell\text{K}^+_s + \textsf{mCut}\).
		\item If \(\pi\) is a proof in \(\alpha\text{-G}^\infty_\ell\text{K}^+_s + \textsf{wCut}\) or in \(\alpha\text{-G}^\infty_\ell\text{K}^+_s + \textsf{mCut}\) then it is a proof in \(\alpha\text{-G}^\infty_\ell\text{K}^+_s + \textsf{Cut}\).
		\item \(\text{G}^\infty_\ell \text{K}^+_s \vdash \Gamma \Rightarrow_s \Delta\) iff \(\text{G}^\infty_\ell \text{K}^+ \vdash \Gamma \Rightarrow_s \Delta\) and similarly if we add \(\textsf{Cut}\).
	\end{enumerate}
\end{proposition}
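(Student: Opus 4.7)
The proof plan is a transfinite induction on $\alpha$ for items (1) and (2), and a pair of mutual translations for item (3), lifting into the hierarchy on one side and flattening witnesses on the other.

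For (1), I compare rule instances at the fixed ordinal $\alpha$. The axioms together with the $\to$L, $\to$R, $\nec$, $\necm_f$, and $\necm_u$ rules of $\alpha\text{-G}^\infty_\ell\text{K}^+_s$ agree verbatim with those of the $+\textsf{wCut}$ and $+\textsf{mCut}$ variants, and the local-progress/branch condition is identical throughout. The only difference is the class of permissible witnesses: in the base system witnesses must be proofs in some smaller $\beta\text{-G}^\infty_\ell\text{K}^+_{s'}$; under $+\textsf{wCut}$ they may additionally contain cuts, so the inductive hypothesis lifts each base witness into the enriched system; under $+\textsf{mCut}$ the witness class is unchanged. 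Item (2) is handled in the same style: a proof with $\textsf{wCut}$ has cut-containing witnesses of smaller ordinal, which are exactly what $+\textsf{Cut}$ admits; a proof with $\textsf{mCut}$ has cut-free witnesses, which are trivially cut-containing. In both subcases the main-tree structure is preserved and the inductive step closes.

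The nontrivial item is (3). For the direction $\text{G}^\infty_\ell\text{K}^+_s \vdash \;\Rightarrow\; \text{G}^\infty_\ell\text{K}^+ \vdash$, I proceed by transfinite induction on $\alpha$: given $\pi$ in $\alpha\text{-G}^\infty_\ell\text{K}^+_s$, plug each witness subproof (attached to a $\nec$, $\necm_f$, or $\necm_u$ node) in place and interpret both $\necm_f$ and $\necm_u$ back as the single Shamkanov $\necm$ rule. The only delicate point is to verify the branch condition: an infinite branch of the flattened tree either eventually enters and remains inside some plugged-in witness, where the inductive hypothesis supplies a focused suffix, or stays in the main local-fragment skeleton of $\pi$, in which case the local-progress condition forces infinitely many passages through right premises of $\necm_f$, each preserving the focused formula. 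Both cases deliver the required constant-focus suffix.

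For the converse $\text{G}^\infty_\ell\text{K}^+\vdash \;\Rightarrow\; \text{G}^\infty_\ell\text{K}^+_s\vdash$, I extract a witness decomposition structurally: each premise of a $\nec$-rule, each left premise of an $\necm$-rule, and each right premise of an $\necm$-rule whose conclusion annotation $s'$ differs from the principal formula $\phi$, is marked as a witness, while $\necm$-rules with $s'=\phi$ become $\necm_f$-instances keeping their right premise in the main local fragment. The ordinal $\alpha$ is taken to be the rank of the resulting witness-tree, and transfinite recursion then produces the desired proof, with the $+\textsf{Cut}$ variants handled by the very same decomposition. The main obstacle, and the only genuinely nonroutine step, is showing that this witness-tree is well-founded: an infinite descending chain of witnesses corresponds by construction to an infinite branch of the original Shamkanov proof along which the focused annotation changes at every witness boundary, contradicting the assumption that some suffix has constant focus. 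Once this well-foundedness is established, everything else is bookkeeping.
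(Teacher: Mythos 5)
Your proposal is correct and follows essentially the same route as the paper: items (1) and (2) are immediate from the definitions, and item (3) is a transfinite induction/recursion on an ordinal rank of the proof, which the paper phrases as ``induction on $\|\pi\|$'' using Shamkanov's height. The only added value in your write-up is that you make explicit why that rank exists, namely the well-foundedness of the witness tree extracted from the constant-focus branch condition, a detail the paper delegates to the cited definition of height.
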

\begin{proof}
  The first two claims are straightforward by definition.
	The third claim is proven by induction on \(\|\pi\|\) where for \(\pi\) in \(\text{G}^\infty_\ell \text{K}^+\) (\(+ \textsf{Cut}\)), \(\|\pi\|\) refers to the 
	the definition of height in~\cite{shamkanov2023structuralprooftheorymodal}.
\end{proof}

Our objective will be to show that if \(\text{G}^\infty_\ell \text{K}^+_s  + \textsf{Cut} \vdash \Gamma \Rightarrow_s \Delta\) then \(\text{G}^\infty_\ell \text{K}^+_s \vdash \Gamma \Rightarrow_s \Delta\).
Broadly, we will prove this in three steps (see Theorem~\ref{th:cut-elimination} and Figure~\ref{fig:cut-elim}):
\begin{enumerate}
	\item If \(\text{G}^\infty_\ell \text{K}^+_s  + \textsf{Cut} \vdash \Gamma \Rightarrow_s \Delta\)	then  \(\text{G}^\infty_\ell \text{K}^+_s  + \textsf{mCut} \vdash \Gamma \Rightarrow_s \Delta\);
	\item If \(\text{G}^\infty_\ell \text{K}^+_s  + \textsf{mCut} \vdash \Gamma \Rightarrow_s \Delta\)	then  \(\text{G}^\infty_\ell \text{K}^+_s  + \textsf{wCut} \vdash \Gamma \Rightarrow_s \Delta\), with the additional property that the proof in \(\text{G}^\infty_\ell \text{K}^+_s  + \textsf{wCut}\) will only have finitely many cuts in each witness; and 
	\item If \(\text{G}^\infty_\ell \text{K}^+_s  + \textsf{wCut} \vdash \Gamma \Rightarrow_s \Delta\) with only finitely many cuts in each witness, then (using
	cut admissibility) 
	\(\text{G}^\infty_\ell \text{K}^+_s  \vdash \Gamma \Rightarrow_s \Delta\).
\end{enumerate}

\textbf{On the use of the expressions ``cut admissibility'' and ``cut-elimination''}.
In the proof theoretical literature it is usual to distinguish ``cut admissibility'' and ``cut elimination''.
It is common to understand cut-elimination as an algorithmic procedure operating on proofs to eliminate the cut rule from a sequent calculus and cut admissibility simply as proving (not necessarily in an algorithmic way) that the cut rule is not necessary, e.g. via the semantics.
This is not the meaning that we will give to this expressions. To be more precise, if \(\mathcal{G}\) is a sequent calculus, cut admissibility for \(\mathcal{G}\) is the statement that if \(\mathcal{G} \vdash \Gamma \Rightarrow \Delta, \chi\) and \(\mathcal{G} \vdash \chi, \Gamma \Rightarrow \Delta\) then \(\mathcal{G} \vdash \Gamma \Rightarrow \Delta\).
On the other hand, cut elimination is the statement that the sequents provable in \(\mathcal{G} + \textsf{Cut}\) (i.e., the system \(\mathcal{G}\) extended by the cut rule) are also provable in \(\mathcal{G}\).

In wellfounded proof theory both statements are can be shown to be equivalent by using an induction on the height of proofs.
However, in the non-wellfounded setting, both statements are not equivalent and elimination is strictly stronger than admissibility.
Our method can be seen as a way of lifting admissibility into elimination.
Also, we would like to notice that if a non-wellfounded proof has finitely many cuts, admissibility suffices to produce a cut-free proof.

\section{Change of annotations}\label{sec:change-of-annotations}

Sometimes we have a proof of a sequent \(\Gamma \Rightarrow_s \Delta\) and we need a proof of \(\Gamma \Rightarrow_{s'} \Delta\), i.e.\ we need to change the annotation of the sequent.
As we will show in the following definition, it is possible to define such a translation of proofs.
However, it may change the (ordinal) height of the proof.

\begin{definition}
Given \(\pi\) in \(\text{G}^{ \infty }_{ \ell }\text{K}^{ + }_{ s }\) we can define the proof \(\pi^{ s' }\) of the same sequent in \(\text{G}^{ \infty }_{ \ell }\text{K}^{ + }_{ s' }\).
If \(s = s'\) we just return the same proof, otherwise we proceed by induction on the local height and cases in the last rule applied:
\[
\AxiomC{}
\RightLabel{Ax}
\UnaryInfC{\(\Gamma, p \Rightarrow_s p, \Delta\)}
\DisplayProof
\mapsto
\AxiomC{}
\RightLabel{Ax}
\UnaryInfC{\(\Gamma, p \Rightarrow_{s'} p, \Delta\)}
\DisplayProof
\]

\[
\AxiomC{}
\RightLabel{Ax-\(\bot\)}
\UnaryInfC{\(\Gamma, \bot \Rightarrow_s \Delta\)}
\DisplayProof
\mapsto
\AxiomC{}
\RightLabel{Ax-\(\bot\)}
\UnaryInfC{\(\Gamma, \bot \Rightarrow_{s'} \Delta\)}
\DisplayProof
\]

\[
\AxiomC{\(\pi_{ 0 }\)}
\noLine
\UnaryInfC{\(\Gamma \Rightarrow_s \Delta, \phi\)}
\AxiomC{\(\pi_{ 1 }\)}
\noLine
\UnaryInfC{\(\Gamma, \psi \Rightarrow_s \Delta\)}
\RightLabel{\(\to\)L}
\BinaryInfC{\(\Gamma, \phi \to \psi \Rightarrow_s \Delta\)}
\DisplayProof
\mapsto
\AxiomC{\(\pi_{ 0}^{ s' }\)}
\noLine
\UnaryInfC{\(\Gamma \Rightarrow_{ s'} \Delta, \phi\)}
\AxiomC{\(\pi_{ 1 }^{ s' }\)}
\noLine
\UnaryInfC{\(\Gamma, \psi \Rightarrow_{s'} \Delta\)}
\RightLabel{\(\to\)L}
\BinaryInfC{\(\Gamma, \phi \to \psi \Rightarrow_{s'} \Delta\)}
\DisplayProof
\]

\[
\AxiomC{\(\pi_{ 0 }\)}
\noLine
\UnaryInfC{\(\Gamma, \phi \Rightarrow_s \psi, \Delta\)}
\RightLabel{\(\to\)R}
\UnaryInfC{\(\Gamma \Rightarrow_s \phi \to \psi, \Delta\)}
\DisplayProof
\mapsto
\AxiomC{\(\pi_{ 0 }^{ s' }\)}
\noLine
\UnaryInfC{\(\Gamma, \phi \Rightarrow_{s'} \psi, \Delta\)}
\RightLabel{\(\to\)R}
\UnaryInfC{\(\Gamma \Rightarrow_{s'} \phi \to \psi, \Delta\)}
\DisplayProof
\]

\[
\AxiomC{}
\UnaryInfC{\(\tau \vdash \Gamma, \dnecm \Pi \Rightarrow_\circ \phi\)}
\RightLabel{\(\nec\)}
\UnaryInfC{\(\Sigma, \nec \Gamma, \necm \Pi \Rightarrow_s \nec \phi, \Delta\)}
\DisplayProof
\mapsto 
\AxiomC{}
\UnaryInfC{\(\tau \vdash \Gamma, \dnecm \Pi \Rightarrow_\circ \phi\)}
\RightLabel{\(\nec\)}
\UnaryInfC{\(\Sigma, \nec \Gamma, \necm \Pi \Rightarrow_{s'} \nec \phi, \Delta\)}
\DisplayProof
\]

If \(\pi\) is:
\[
\AxiomC{}
\UnaryInfC{\(\tau \vdash \Gamma, \dnecm \Pi \Rightarrow_\circ \phi\)}
\AxiomC{\(\pi_{ 0 }\)}
\noLine
\UnaryInfC{\(\Gamma, \dnecm \Pi \Rightarrow_\phi \necm \phi\)}
\RightLabel{\(\necm_f\)}
\BinaryInfC{\(\Sigma, \nec \Gamma, \necm \Pi \Rightarrow_s \necm \phi, \Delta\)}
\DisplayProof
\]
(so \(s = \phi\) and then \(s' \neq \phi\)) then it maps to
\[
\AxiomC{}
\UnaryInfC{\(\tau \vdash \Gamma, \dnecm \Pi \Rightarrow_\circ \phi\)}
\AxiomC{}
\UnaryInfC{\(\pi_{ 0 }\vdash \Gamma, \dnecm \Pi \Rightarrow_\phi \necm \phi\)}
\RightLabel{\(\necm_u\)}
\BinaryInfC{\(\Sigma, \nec \Gamma, \necm \Pi \Rightarrow_{s'} \necm \phi, \Delta\)}
\DisplayProof
\]

If \(\pi\) is:
\[
\AxiomC{}
\UnaryInfC{\(\tau_0 \vdash \Gamma, \dnecm \Pi \Rightarrow_\circ \phi\)}
\AxiomC{}
\UnaryInfC{\(\tau_1 \vdash \Gamma, \dnecm \Pi \Rightarrow_\phi \necm \phi\)}
\RightLabel{\(\necm_u\)}
\BinaryInfC{\(\Sigma, \nec \Gamma, \necm \Pi \Rightarrow_s \necm \phi, \Delta\)}
\DisplayProof
\]
then it maps to
\[
\AxiomC{}
\UnaryInfC{\(\tau_0 \vdash \Gamma, \dnecm \Pi \Rightarrow_\circ \phi\)}
\AxiomC{}
\UnaryInfC{\(\tau_1 \vdash \Gamma, \dnecm \Pi \Rightarrow_\phi \necm \phi\)}
\RightLabel{\(\necm_u\)}
\BinaryInfC{\(\Sigma, \nec \Gamma, \necm \Pi \Rightarrow_{s'} \necm \phi, \Delta\)}
\DisplayProof
\]
in case \(s' \neq \phi\), or it maps to
\[
\AxiomC{}
\UnaryInfC{\(\tau_0 \vdash \Gamma, \dnecm \Pi \Rightarrow_\circ \phi\)}
\AxiomC{\(\tau_1\)}
\noLine
\UnaryInfC{\(\Gamma, \dnecm \Pi \Rightarrow_\phi \necm \phi\)}
\RightLabel{\(\necm_f\)}
\BinaryInfC{\(\Sigma, \nec \Gamma, \necm \Pi \Rightarrow_{s'} \necm \phi, \Delta\)}
\DisplayProof
\]
in case \(s' = \phi\).

\[
\AxiomC{\(\pi_{ 0 }\)}
\noLine
\UnaryInfC{\(\Gamma \Rightarrow_{ s } \Delta, \chi\)}
\AxiomC{\(\pi_{ 1 }\)}
\noLine
\UnaryInfC{\(\chi, \Gamma \Rightarrow_{ s }\Delta\)}
\RightLabel{Cut}
\BinaryInfC{\(\Gamma \Rightarrow_{ s }\Delta\)}
\DisplayProof
\mapsto
\AxiomC{\(\pi_{ 0 }^{ s' }\)}
\noLine
\UnaryInfC{\(\Gamma \Rightarrow_{ s' } \Delta, \chi\)}
\AxiomC{\(\pi_{ 1}^{ s' }\)}
\noLine
\UnaryInfC{\(\chi, \Gamma \Rightarrow_{ s' }\Delta\)}
\RightLabel{Cut}
\BinaryInfC{\(\Gamma \Rightarrow_{ s' }\Delta\)}
\DisplayProof
\]
\end{definition}

After a change of annotation the height of the proof may be altered.
In the next lemma we prove some bounds to its change.

\begin{lemma}
Let \(\phi\) be a formula and \(s\) be an annotation.
We have that
\begin{enumerate}
\item If \(\pi\) is a proof in \(\text{G}^{ \infty }_{ \ell }\text{K}^{ + }_{ \phi }\), then \(\|\pi^{ s }\| \leq \|\pi\| + 1\).
\item If \(\pi\) is a proof in \(\text{G}^{ \infty }_{ \ell }\text{K}^{ + }_{ \circ }\), then \(\|\pi^{ s }\| \leq \|\pi\|\).
\end{enumerate}
\end{lemma}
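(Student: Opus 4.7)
The plan is to prove both parts simultaneously by induction on the local height $|\pi|$, with case analysis on the rule at the root of $\pi$. Recall that $\|\pi\|$ is the supremum of $\|\tau\|+1$ over all witnesses $\tau$ appearing at rule instances in the main global fragment of $\pi$, so the ordinal height reacts differently to a premise depending on whether it is a witness or an ordinary subproof. For the structural cases (axioms, $\to$L, $\to$R, Cut), the translation acts recursively on the subproofs, which have strictly smaller local height and inherit the root's annotation; the induction hypothesis bounds each translated subproof's ordinal height, and since these rules introduce no new witnesses the bound propagates directly to $\pi^{s}$ -- with a $+1$ loss in part (1) and no loss in part (2). The $\nec$ case and the subcase $\necm_u \mapsto \necm_u$ (when $s'$ differs from the principal formula of the rule instance) leave every witness literally unchanged and rewrite only the conclusion's annotation, so $\|\pi^{s}\| = \|\pi\|$.

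The two substantive cases are the flips between $\necm_f$ and $\necm_u$. If $\pi$ ends in $\necm_f$ with principal formula $\psi$, its side condition forces the root annotation to be $\psi$, so this case only arises in part (1); the right premise $\pi_0$, previously a subproof contributing $\|\pi_0\|$ to $\|\pi\|$, becomes a witness of the new $\necm_u$ contributing $\|\pi_0\|+1$ to $\|\pi^{s'}\|$. Combined with the unchanged witness $\tau$, this gives $\|\pi^{s'}\| = \max(\|\tau\|+1, \|\pi_0\|+1) \leq \max(\|\tau\|+1, \|\pi_0\|) + 1 = \|\pi\|+1$, tight exactly when $\|\pi_0\| > \|\tau\|$. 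Conversely, when $\pi$ ends in $\necm_u$ with principal formula $\psi$ and the translation turns it into $\necm_f$ (the subcase $s' = \psi$), the right witness $\tau_1$ is reified into an ordinary subproof, dropping its contribution from $\|\tau_1\|+1$ to $\|\tau_1\|$, yielding $\|\pi^{s'}\| = \max(\|\tau_0\|+1, \|\tau_1\|) \leq \|\pi\|$.

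For part (2) the sharper bound $\|\pi^s\| \leq \|\pi\|$ follows because the $+1$-incurring $\necm_f$ case cannot occur: the root annotation $\circ$ is preserved across subproofs in the main local fragment by the structural rules, and the side condition of $\necm_f$ demands that the annotation be a formula, so no $\necm_f$ node can appear in the main local fragment of a $\circ$-annotated proof. The main obstacle is essentially careful bookkeeping: tracking, at each modal rule instance, which premises are witnesses (contributing $+1$ to the ordinal height) versus ordinary subproofs, and verifying that the root's annotation is correctly propagated to the subproofs used in the recursion so that the appropriate induction hypothesis applies.
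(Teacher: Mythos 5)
Your proof is correct and follows essentially the same route as the paper: induction on local height, identifying the $\necm_f\mapsto\necm_u$ conversion (a subproof becoming a witness) as the sole source of the $+1$ in part~(1), and observing that $\necm_f$ cannot occur under a $\circ$-annotation so part~(2) incurs no increase. Your version merely spells out the bookkeeping that the paper's one-paragraph argument leaves implicit.
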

\begin{proof}
By induction on the local height.
In 1.\ the possible increase occurs in the \(\necm_{f}\)-case since it can turn a subproof \(\pi'\) with (possibly) \(\|\pi'\| = \|\pi\|\) to a witness which will impose that the new height is strictly bigger ordinal.
Notice that in 2.\ this is not possible since there are no instances of \(\necm_f\) in the original proof, only of \(\necm_{u}\).
\end{proof} 

In the future, we will need to apply this construction while we prove cut admissibility and cut elimination.
However, in order to use this translation adequately in the proofs, we will need to have some additional properties.
For example, if we apply this translation to a proof without cuts it must remain without cuts after the translation.
We will now describe the conditions that we need for this translation and other auxiliary translations such as weakening, contraction and inversion.
But first we need to define a special type of cut, local cuts:

\begin{definition}
  Let \(\pi\) be a proof in \(\text{G}^\infty_\ell \text{K}^+_s + \text{Cut}\).
  We say that \(\pi\) has \emph{local cuts only} in case \(\pi\) is a proof in \(\text{G}^\infty_\ell \text{K}^+_s + \text{mCut}\) and all the instances of cut
  occurs at the main local fragment (i.e.\ before any modal rule).
  The cuts occuring in the main local fragment of a proof will be called the \emph{local cuts} of the proof.
\end{definition}

\begin{definition}
  Let \(f\) be an \(n\)-ary function whose domain is some set of tuples of proofs and its range is some sets of proofs.
  We say that \(f\) preserves
  \begin{enumerate}
    \item \emph{Ordinal height} iff given a tuple \((\pi_0, \ldots, \pi_{n-1})\)
    in the domain of \(f\) the height of \(f(\pi_0, \ldots, \pi_{n-1})\) is
    smaller or equal than the maximum of heights of the \(\pi_i\)s.
    \item \emph{Local height} iff given a tuple \((\pi_0, \ldots, \pi_{n-1})\)
      in the domain of \(f\) the local height of \(f(\pi_0, \ldots, \pi_{n-1})\) is
    smaller or equal than the maximum of local heights of the \(\pi_i\)s.
    \item \emph{Size of cuts} iff given a tuple \((\pi_0, \ldots, \pi_{n-1})\) in the domain of \(f\) such that each \(\pi_i\) has all its cuts are of size smaller than \( n\), then \(f(\pi_0, \ldots, \pi_{n-1})\) has also all its cuts of size smaller than \(n\).\footnote{With all its cuts here we mean the cuts in the main global fragment of the proof, in the witnesses, in the witnesses of witnesses, and so on.}
    In particular, if the \(\pi_i\)s are cut-free (i.e.\ the sizes of cuts are \(< 0\)) then \(f(\pi_0, \ldots, \pi_{n-1})\) is also cut-free.
    \item \emph{Freeness of cuts in the main local fragment} iff given a tuple \((\pi_0, \ldots, \pi_{n-1})\) in the domain of \(f\) such that each \(\pi_i\) without cuts in its main local fragment, then \(f(\pi_0, \ldots, \pi_{n-1})\) does not have cuts in its main local fragment either.
    \item \emph{Locality of cuts} iff given a tuple \((\pi_0, \ldots, \pi_{n-1})\) in the domain of \(f\) such that each \(\pi_i\) has local cuts only, then \(f(\pi_0, \ldots, \pi_{n-1})\) also has local cuts only.
    \item \emph{Locality of cuts in witnesses} iff given a tuple \((\pi_0, \ldots, \pi_{n-1})\) in the domain of \(f\) such that each \(\pi_i\) has witnesses with local cuts only, then the witnesses of \(f(\pi_0, \ldots, \pi_{n-1})\) have local cuts only.         
    \end{enumerate}
    If \(f\) preserves local height, sizes of cuts, freeness of cuts in the
    main local fragment and locality of cuts we say that it is \emph{weakly 
    preserving}.
    If \(f\) preserves all the properties described above we will say that it is
    \emph{strongly preserving}.
\end{definition}

The following observation is straightforward from the definition.

\begin{proposition}
Change of annotations, i.e.\ the function \(\pi \mapsto \pi^s\), is weakly preserving.
\end{proposition}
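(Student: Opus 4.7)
The plan is induction on the local height of $\pi$, doing a case analysis on the last rule (following the definition of $\pi \mapsto \pi^{s'}$) and checking all four required properties --- local height, sizes of cuts, freeness of cuts in the main local fragment, and locality of cuts --- in parallel in each case. The trivial base cases Ax and Ax-$\bot$ just re-label the annotation and satisfy all four properties on the nose.

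For the non-modal inductive cases $\to$L, $\to$R, and Cut the translation copies the root rule and recursively translates the subproofs, which sit inside the same local fragment as the root; the inductive hypothesis therefore delivers all four properties without extra work, noting that in the Cut case the cut formula is copied unchanged. In the modal cases the main local fragment of both $\pi$ and $\pi^{s'}$ is the singleton consisting of the root, so local height is trivially $0 \leq 0$, and freeness of cuts in the main local fragment is immediate because a modal root is never a Cut. For the $\nec$ case and the ``$\necm_u \to \necm_u$'' sub-case nothing other than the conclusion's annotation changes. In the two role-swapping sub-cases $\necm_f \to \necm_u$ and $\necm_u \to \necm_f$ a subproof is recast as a witness or vice versa, but the underlying proof object is not rewritten; hence sizes of cuts cannot change either.

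The only place that needs a moment of thought is locality of cuts. The key observation is that if $\pi$ has local cuts only and its last rule is modal, then the main local fragment of $\pi$ is the singleton $\{\text{root}\}$ which contains no Cut node, so $\pi$ has no cuts at all in its main global fragment and all of its witnesses are cut-free; the role swap in $\necm_f \leftrightarrow \necm_u$ then trivially preserves this. For the Cut case one uses the complementary observation: Cut makes no progress at either premise, so the main local fragments of $\pi_0$ and $\pi_1$ are contained in $\pi$'s main local fragment, the local-cuts-only hypothesis descends to them, the IH applies, and re-applying Cut at the root preserves it. I expect this descent argument for Cut to be the only step warranting explicit bookkeeping; the modal role-swaps, despite looking exotic, collapse to the trivial case ``cut-free in, cut-free out''.
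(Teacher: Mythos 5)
Your proof is correct and follows essentially the same route as the paper's: the paper's (much terser) argument likewise rests on the observations that the translation sends each rule instance to one rule instance and modal rules to modal rules (so the main local fragment sits in the same place), and that it introduces no cuts beyond those already present. Your explicit treatment of the $\necm_f\leftrightarrow\necm_u$ role swaps and of the descent of the local-cuts-only property through \textsf{Cut} is just the unfolded version of that same reasoning.
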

\begin{proof}
  That it preserves sizes of cuts is trivial from the definition.
  Preservation of local height is due to the transformation converts one rule application in one rule application and modal rules into modal rules, this last property makes the main local fragment to be at the same position (as the main local fragment is determined by where modal rules occur).
  Freeness of cuts in the main local fragment and locality of cuts are preserved since the transformation introduces cuts just from cuts that where already there and, as stated before, it keeps the main local fragment at the same position.
\end{proof}

\section{Auxiliary functions}

In this section we just state one big lemma with all the auxiliary functions that we will need (apart from change of annotations, see Section~\ref{sec:change-of-annotations}).
All the auxiliary functions can be defined straightforwardly by recursion on the local height and all its properties can be shown straightforwardly by induction on the local height.
If the reader wants to see explicit definitions they are encouraged to look in
the appendix chapters A, B and C.

\begin{lemma}
Let \(p\) be a propositional variable,  \(\chi, \chi_0, \chi_1\) be formulas and \(\Gamma, \Gamma', \Delta, \Delta'\) be multisets of formulas.
There are \textbf{strongly preserving} functions \(\textsf{wk}_{\Gamma';\Delta'}\), \(\textsf{lctr}_p\), \(\textsf{rctr}_p\), \(\textsf{rctr}_{\necm \chi}\), \(\textsf{inv}_\bot\), \(\textsf{linv}^0_{\chi_0 \to \chi_1}\), \(\textsf{linv}^1_{\chi_0 \to \chi_1}\), \(\textsf{rinv}_{\chi_0 \to \chi_1}\) from proofs in \(\text{G}^\infty_\ell \text{K}^+_s + \textsf{Cut}\) to proofs in \(\text{G}^\infty_\ell \text{K}^+_s + \textsf{Cut}\) such that:
\begin{enumerate}
  \item \(\pi \vdash \Gamma \Rightarrow_s \Delta\) implies \(\textsf{wk}_{\Gamma';\Delta'}(\pi) \vdash \Gamma, \Gamma' \Rightarrow_s \Delta, \Delta'\).
  \item \(\pi \vdash \Gamma, p, p \Rightarrow_s \Delta\) implies \(\textsf{lctr}_{p}(\pi) \vdash \Gamma, p \Rightarrow_s \Delta\).
  \item \(\pi \vdash \Gamma \Rightarrow_s \Delta, p, p\) implies \(\textsf{rctr}_{p}(\pi) \vdash \Gamma \Rightarrow_s \Delta, p\).
  \item \(\pi \vdash \Gamma \Rightarrow_s \Delta, \necm \chi, \necm \chi\) implies \(\textsf{rctr}_{\necm \chi}(\pi) \vdash \Gamma \Rightarrow_s \Delta, \necm \chi\).
  \item \(\pi \vdash \Gamma \Rightarrow_s \Delta, \bot\) implies \(\textsf{ctr}_{\bot}(\pi) \vdash \Gamma \Rightarrow_s \Delta\).
  \item \(\pi \vdash \Gamma, \chi_0 \to \chi_1 \Rightarrow_s \Delta\) implies \(\textsf{linv}^0_{\chi_0 \to \chi_1}(\pi) \vdash \Gamma \Rightarrow_s \Delta, \chi_0\).
  \item \(\pi \vdash \Gamma, \chi_0 \to \chi_1 \Rightarrow_s \Delta\) implies \(\textsf{linv}^1_{\chi_0 \to \chi_1}(\pi) \vdash \Gamma, \chi_1 \Rightarrow_s \Delta\).
  \item \(\pi \vdash \Gamma \Rightarrow_s \Delta, \chi_0 \to \chi_1\) implies \(\textsf{rinv}_{\chi_0 \to \chi_1}(\pi) \vdash \Gamma, \chi_0 \Rightarrow_s \Delta, \chi_1\).
\end{enumerate}
\end{lemma}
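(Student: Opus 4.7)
The plan is to define all eight transformations by simultaneous recursion on the local height of the input proof, and to verify the six strong-preservation properties by a parallel induction on the same parameter. A single uniform observation drives the construction: for each of the formulas being manipulated---the weakened multisets \(\Gamma', \Delta'\), the propositional atom \(p\), the constant \(\bot\), the implication \(\chi_0 \to \chi_1\), and the master formula \(\necm \chi\)---the formula cannot occur inside \(\nec \Gamma\) or \(\necm \Pi\) in the conclusion of a modal rule, except possibly for \(\necm \chi\) coinciding with the principal \(\necm \phi\). Consequently the modifications never reach past the main local fragment, so every witness and every right premise of an \(\necm_f\) can be carried over verbatim.

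Concretely, for each function I proceed by cases on the last rule of \(\pi\). For the two axioms the output is the obvious axiom for the modified sequent (e.g.\ \(\textsf{linv}^0_{\chi_0 \to \chi_1}\) applied to an Ax instance on \(\Gamma, p, \chi_0 \to \chi_1 \Rightarrow_s p, \Delta\) returns the Ax instance on \(\Gamma, p \Rightarrow_s p, \Delta, \chi_0\)). For \(\to\)L, \(\to\)R and \(\textsf{Cut}\) the function simply recurses on each premise and reapplies the same rule. The inversion functions additionally require the sub-case where the principal implication of \(\to\)L or \(\to\)R coincides with \(\chi_0 \to \chi_1\): there the corresponding premise is returned directly. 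For a modal rule one keeps every witness and, for \(\necm_f\), the right premise unchanged, and reapplies the same modal rule with the weakening parts \(\Sigma, \Delta\) adjusted accordingly. The only mildly delicate sub-case is \(\textsf{rctr}_{\necm \chi}\) at a modal rule whose principal formula is itself \(\necm \chi\): one reapplies the modal rule with the same witness(es), letting one of the two copies of \(\necm \chi\) on the right play the role of the principal formula and dropping the other copy from \(\Delta\).

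The six preservation properties now fall out automatically. Because witnesses are never touched, ordinal height, locality of cuts in witnesses, and (together with the recursive calls) locality of cuts are preserved. Local height is preserved because each recursive step produces exactly one rule instance of the same type as the input, with the exception of the inversion cases that select a premise, which can only decrease the local height. Preservation of cut sizes and of freeness of cuts in the main local fragment follow because no new \(\textsf{Cut}\) instance is ever introduced: the cuts occurring in the output are exactly those carried over from the input, either via the recursive calls or via the preserved witnesses.

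I expect the main obstacle---such as it is---to be the bookkeeping for \(\textsf{rctr}_{\necm \chi}\) on a modal rule whose principal formula is the one being contracted, where one has to verify carefully that the modal rule can indeed be reapplied with a correctly adjusted weakening part regardless of which of the two copies of \(\necm \chi\) is read as principal. All the other cases are a straightforward but lengthy exercise in case analysis, which is why the authors relegate the explicit definitions to the appendix.
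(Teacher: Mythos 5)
Your proposal is correct and follows essentially the same route as the paper's appendix: each function is defined by recursion on the local height with a case split on the last rule, witnesses and the right premises of \(\necm_f\) are carried over verbatim, the manipulated formulas land in the weakening part of modal rules, the inversions select a premise when the principal formula matches, and the only special modal case is \(\textsf{rctr}_{\necm\chi}\) when \(\necm\chi\) is itself principal. The preservation properties are then verified exactly as you describe, by the parallel induction.
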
 

\section{Pushing cuts}

The first task to prove cut elimination will be to show that it is possible to
push cuts to upper parts of the proof.
We will do this in two stages: first pushing them outside the main local fragment
recursively and then pushing them outside the main global fragment to witnesses
corecursively.

\subsection{Pushing a cuts outside main local fragment}

\begin{definition}
	Let \(\pi \vdash \Gamma \Rightarrow_s \Delta, \chi\) and \(\tau \vdash \chi, \Gamma \Rightarrow_s \Delta\).
	We define \(\textsf{cut}^\chi(\pi, \tau)\) to be the result of applying the rule \(\textsf{Cut}\) to \(\pi\) and \(\tau\).

	Notice that it preserves locality of cuts.
\end{definition}

We start by pushing only one cut that we assume to be a top-most cut in the
main local fragment, i.e.\ there are no cuts above it belonging to the main local
fragment.

\begin{lemma}
	There is a binary function \(\textsf{push-top}_\chi\) whose domain are the pairs \((\pi, \tau)\) of proofs such that
	\begin{enumerate}
		\item \(\pi\) and \(\tau\) are proofs in \(\text{G}^\infty_\ell \text{K}^+_s + \textsf{Cut}\) with no cuts in their main local fragment, and
		\item we have that \(\pi \vdash \Gamma \Rightarrow_s \Delta, \chi\) and \(\tau \vdash \chi, \Gamma \Rightarrow_s \Delta\) for some \(\Gamma, \Delta\), 
		\end{enumerate}  
	and that returns a proof in \(\textsf{G}^\infty_\ell\text{K}^+_s + \textsf{Cut}\) with the following properties: 
	\begin{enumerate}
		\item \(\textsf{push-top}_\chi(\pi, \tau) \vdash \Gamma \Rightarrow_s \Delta\) and 
		has no cuts in its main local fragment,
		\item if the size of cuts in \(\pi\), the size of cuts in \(\tau\) and the
		size of the formula \(\chi\) are strictly smaller than \(n\), then the size
		of cuts in \(\textsf{push-top}_\chi(\pi, \tau)\) are strictly smaller than 
		\(n\), and
		\item \(\textsf{push-top}\) preserves locality of cuts in witnesses.
	\end{enumerate}
\end{lemma}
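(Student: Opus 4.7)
The plan is to define \(\textsf{push-top}_\chi(\pi,\tau)\) by primitive recursion on the sum \(|\pi|+|\tau|\) of local heights, with case analysis on the final rules of \(\pi\) and \(\tau\). Since neither proof has a cut in its main local fragment, every top-most rule encountered is either an axiom, a propositional rule, or a modal rule (which marks the boundary of the local fragment).

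\emph{Axiomatic cases.} If \(\pi\) is an Ax instance and \(\chi\) is the displayed propositional variable, apply \(\textsf{lctr}_p\) to \(\tau\); otherwise the conclusion is itself an axiom. The Ax-\(\bot\) cases and the symmetric cases for \(\tau\) are analogous.

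\emph{Non-principal propositional cases.} If the final rule of \(\pi\) is \(\to\text{L}\) or \(\to\text{R}\) and \(\chi\) is not its principal formula, push the cut into each premise of \(\pi\) by recursive calls to \(\textsf{push-top}_\chi\) (each call has strictly smaller \(|\pi|\)) and reapply the propositional rule below; the reapplied rule introduces no cut in the main local fragment, and the recursive outputs are cut-free in their main local fragments by inductive hypothesis. Symmetric treatment applies to \(\tau\).

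\emph{Modal weakening cases.} If \(\pi\)'s final rule is modal and \(\chi\) lies in the weakening part (\(\Sigma\) or \(\Delta\)), drop \(\chi\) and reapply the same modal rule with its original witnesses; \(\tau\) is discarded, and the result has no cut in its main local fragment. Symmetric for \(\tau\).

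\emph{Principal cases.} When \(\chi\) is principal in both final rules: if \(\chi=\chi_0\to\chi_1\), reduce to cuts on the strictly smaller \(\chi_0\) and \(\chi_1\) via the inversion functions \(\textsf{linv}^0_{\chi_0\to\chi_1}, \textsf{linv}^1_{\chi_0\to\chi_1}, \textsf{rinv}_{\chi_0\to\chi_1}\) and two further calls to \(\textsf{push-top}\); if \(\chi=\nec\phi\), apply the \(\nec\)-rule below with a fresh witness obtained by weakening \(\pi\)'s witness into the combined modal context via \(\textsf{wk}\) and cutting with \(\tau\)'s witness on the strictly smaller formula \(\phi\); if \(\chi=\necm\phi\), similarly combine all witnesses of \(\pi\) and \(\tau\) that expose \(\phi\) or \(\necm\phi\), using \((\cdot)^{s'}\) from Section~\ref{sec:change-of-annotations} to align focus annotations and cutting on \(\phi\) (and, if needed, on \(\necm\phi\)) at the roots of the new witnesses.

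\emph{Main obstacle.} The hardest case is \(\chi=\necm\phi\), where two witnesses of \(\pi\) and at least one witness of \(\tau\) must be combined with compatible focus annotations, and one of the newly introduced cuts may have cut formula \(\necm\phi\) itself---of the same size as \(\chi\). What makes the accounting work is that every new cut is inserted at the very root of a freshly constructed witness, so (i) locality of cuts in witnesses is preserved (the new cut lies in the main local fragment of the witness it belongs to), (ii) all new cut formulas have size \(\le|\chi|<n\), so size of cuts is preserved, and (iii) no new cut ever appears in the main local fragment of the output. Strong preservation of \(\textsf{wk}\) and \((\cdot)^{s'}\), together with the plain cut constructor \(\textsf{cut}^\chi\) applied only inside witnesses, handles the remaining bookkeeping.
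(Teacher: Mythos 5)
Your overall strategy matches the paper's: a case analysis on the last rules of \(\pi\) and \(\tau\), with genuine recursive calls only in the propositional cases and, in the modal cases, actual instances of \(\textsf{Cut}\) inserted above the modal rule (at the roots of new witnesses or of new local fragments), so that the main local fragment stays cut-free while the cuts merely migrate upward. However, there are two concrete gaps. First, your recursion is declared to be on \(|\pi|+|\tau|\) alone, but this does not justify the principal implication case: there the second new cut takes as one argument the \emph{output} of the first recursive call, whose local height is not controlled, so the termination argument must be the lexicographic pair \((|\chi|,\,|\pi|+|\tau|)\) with the cut-formula size as the dominant component (the paper inducts on exactly this pair). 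You implicitly appeal to the smaller size of \(\chi_0,\chi_1\), but the measure you state does not support the nested call.

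Second, and more seriously, your case split omits the situation \(\chi=\necm\chi_0\) with \(s=\chi_0\). By the annotation condition, \(\necm\chi_0\) then already occurs in \(\Delta\), and the paper disposes of this case outright by a contraction \(\textsf{rctr}_{\necm\chi_0}(\pi)\), with no cut at all. This short-circuit is not cosmetic: it is precisely what guarantees that afterwards a principal \(\necm\chi_0\) in \(\pi\) can only have been introduced by \(\necm_u\) (never \(\necm_f\)), so that both premises of \(\pi\) are witnesses and the combination you describe is well-typed. Without it you would face a reduction where \(\pi\) ends in \(\necm_f\), whose right premise lives in a different local fragment with annotation \(\chi_0\) and is not a witness, and your construction does not say what to do there. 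A smaller inaccuracy: in the \(\nec\)-\(\necm_f\) and \(\necm_u\)-\(\necm_f\) reductions not every new cut sits at the root of a fresh witness --- some land in the main global fragment above the new \(\necm_f\), and former witnesses become subproofs; locality of cuts in witnesses then needs the extra observation that those promoted witnesses had local cuts only, so their own witnesses (now witnesses of the output) are cut-free.
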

\begin{proof}
	We proceed by induction on the pair \((|\chi|, |\pi| + |\tau|)\) (i.e.\ the
	size of \(\chi\) and the sum of the local heights of \(|\pi|\) and \(|\tau|\))
	ordered lexicographically.
	All the possible cut reductions the displayed in Appendix~\ref{sec:cut-reductions}, we are going to argue that the conditions 1 to 3 are fulfilled after each of the reductions, and we have to say what each \(\textsf{cut}_i\) is when it occurs in a cut reduction.

	The case where the label is the cut formula and the axiomatic cases are straightforward using the properties of the auxiliary functions.

	In the principal reduction case, \(\textsf{cut}_1\) and \(\textsf{cut}_2\) are just applications of the I.H.\ (Induction Hypothesis) using that the size of the cut formula is strictly smaller.
	No cut will be in the main local fragment and the condition on the size will be fulfilled since we use the I.H.\ with smaller cut formulas. 
	If the original proof has witnesses with local cuts only, then so does the proof after the cut reduction just by using the I.H. In this argument, we have to use that the auxiliary functions used are strongly preserving.

	For the commutative implication cases, each \(\textsf{cut}_i\) will just be an application of the I.H.\ using that the sum of local heights is smaller with the same cut formula \(\chi\).
	That the conditions are fulfilled can be seen by using that the auxiliary functions used are strongly preserving and employing the I.H.\ in a similar manner to previous case (although for the bound in the sizes of cuts we now use that the I.H.\ is used with \(\chi\), and not with a smaller cut formula).

	We turn to the modal cases.
	If the cut formula is in the weakening part of a modal rule, all the desired properties are easily fulfilled. So we turn to the commutative cases.
	In these, \(\textsf{cut}_i\) will always be a proper application of the cut rule.
	We note that still the main local fragment will have no application of cut since all the \(\text{cut}_i\) appear outside the main local fragment.
	The condition on the sizes of cuts is easily fulfilled since the cut formulas of the \(\textsf{cut}_i\) are smaller or equal than the original cut formula (the cut formula will be of shape \(\nec \chi_0\) or \(\necm \chi_0\) and the \(\text{cut}_i\)'s will have either the cut formula or \(\chi_0\) only).
	We also note that all the auxiliary functions used are at least weakly preserving, so using them will not break this property.

	Lastly we argue for the locality of cuts in witnesses.
	We note one thing: when we apply an auxiliary function to a subproof of the main global fragment of the original proof, we want to use the preservation of locality of cuts in witnesses, while if we apply an auxiliary function to a witness, we want to use the preservation of locality of cuts (because the witness have the cuts local while the subproof will have the property that its witnesses have cuts local).\footnote{
		We remember that witnesses are not subproofs (neither the converse).
	}
	Strongly preserving auxiliary functions preserves both, but change of annotations only preserve locality of cuts.
	However, change of annotations is only applied to witnesses.
	Also, the \(\textsf{cut}_i\)'s we add at witnesses are always in the main local fragment (since we add them at the bottom and we do not add any modal rule at witnesses).

	Let us explicitly discuss what occurs at \(\nec\)-\(\necm_f\) and \(\necm_u\)-\(\necm_f\) reductions, since these cases are a little harder.
	In these cases some witnesses become subproofs after the cut reduction, \(\pi_0\) in the first reduction and both \(\pi_0\) and \(\pi_1\) in the second.
	This is non-problematic for our reasoning since we know that after applying all the auxiliary functions to those witnesses they have local cuts only, and then all their witness have no cuts so in particular all the witnesses have local cuts only.
	In other words, we are adding the cuts in the main local fragment of the witnesses
	to the main global fragment of the proof after the cut reduction, thus enlarging the
	number of cuts in the main global fragment.
	This is non-problematic because the cuts added in this way are outside the 
	main local fragment, which is what we want to have cut-free in this lemma.
\end{proof}

\begin{lemma}\label{lm:cuts-outside-local-fragment}
	There is a function \(\textsf{push-local}\) from proofs in \(\text{G}^\infty_\ell \text{K}^+_s + \textsf{Cut}\) to proofs in \(\text{G}^\infty_\ell \text{K}^+_s + \textsf{Cut}\) such that:
	\begin{enumerate}
		\item for any \(\pi \vdash \Gamma \Rightarrow_s \Delta\) proof in
		\(\text{G}^\infty_\ell \text{K}^+_s + \textsf{Cut}\) we have that
		\(\textsf{push-local}(\pi) \vdash \Gamma \Rightarrow_s \Delta\) with no cuts
		in its main local fragment, 
		\item \(\textsf{push-local}\) preserves sizes of cuts, and
		\item \(\textsf{push-local}\) preserves locality of cuts in witnesses.
	\end{enumerate}
\end{lemma}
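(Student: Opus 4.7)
The plan is to define \(\textsf{push-local}\) by recursion on the local height \(|\pi|\). Since each main local fragment is a finite tree, this recursion terminates. We case-split on the root rule of \(\pi\).

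Easy cases. If the root of \(\pi\) is an axiom or one of the modal rules \(\nec\), \(\necm_f\), \(\necm_u\), then the main local fragment of \(\pi\) consists only of the root node: the non-witness premise of \(\necm_f\) has progress (hence starts a new local fragment), and all other non-witness premises of these rules are absent. The root is not a cut, so the main local fragment is already cut-free; we set \(\textsf{push-local}(\pi) = \pi\) and the three preservation properties are immediate. If the root is a propositional rule (\(\to\)L or \(\to\)R), every premise lies in the same main local fragment as the root. We recursively invoke \(\textsf{push-local}\) on each premise and reapply the same rule; each recursive call has strictly smaller local height, and the new main local fragment is a single propositional node sitting above the cleaned main local fragments of the recursive results, hence it is cut-free. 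Reapplying a propositional rule neither adds nor removes cuts, so the inductive hypothesis yields preservation of sizes of cuts and of locality of cuts in witnesses.

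Cut case. If the root of \(\pi\) is a \(\textsf{Cut}\) on formula \(\chi\) with premises \(\pi_0, \pi_1\), both premises are in the same main local fragment, so their local heights are strictly smaller than \(|\pi|\) and the IH applies. We define
\[
\textsf{push-local}(\pi) = \textsf{push-top}_\chi\bigl(\textsf{push-local}(\pi_0),\ \textsf{push-local}(\pi_1)\bigr).
\]
By IH, both arguments of \(\textsf{push-top}_\chi\) have no cuts in their main local fragments, which is exactly the required precondition. The output therefore has no cuts in its main local fragment. For preservation of sizes of cuts: if every cut in \(\pi\) has size \(< n\), then in particular \(\chi\) has size \(< n\) (as it is the cut formula of the root cut of \(\pi\)) and, by IH, all cuts in \(\textsf{push-local}(\pi_i)\) have size \(< n\); the size clause of \(\textsf{push-top}_\chi\) then forces the output's cuts to have size \(< n\). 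Locality of cuts in witnesses is preserved analogously, combining the IH with the corresponding clause in the statement of \(\textsf{push-top}_\chi\).

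Main obstacle. Essentially all of the real work has already been carried out in the previous lemma about \(\textsf{push-top}_\chi\); here the only delicate point is ensuring that the recursion on local height is well-founded and that in the cut case both premises strictly decrease the local height — this holds because \(\textsf{Cut}\) does not cross a local-fragment boundary, so it truly lives inside the same main local fragment as its premises' roots. Once this observation is in place, the three preservation properties flow directly from the preservation clauses of \(\textsf{push-top}_\chi\) together with the inductive hypothesis.
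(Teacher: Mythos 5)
Your proof is correct and takes essentially the same approach as the paper: both reduce the entire content of the lemma to \(\textsf{push-top}\) via a finite induction over the main local fragment, checking that the preconditions of \(\textsf{push-top}\) (no cuts in the main local fragments of the two premises) are supplied by the induction hypothesis. The only difference is bookkeeping — the paper inducts on the number of cuts in the main local fragment, whereas you recurse on the local height with a case split on the root rule — and this does not change the substance of the argument.
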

\begin{proof}
	The proof is a simple induction on the number of cuts in the main local fragment of \(\pi\), using \textsf{push-top}.
\end{proof}

\subsection{Pushing cuts outside main global fragment}

\begin{lemma}\label{lm:cuts-outside-main-fragment}
	There is a function \(\textsf{push}\) from proofs in 
	\(\text{G}^\infty_\ell \text{K}^+_s + \textsf{Cut}\) to proofs in \(\text{G}^\infty_\ell \text{K}^+_s + \textsf{wCut}\) (so without cuts in its main global fragment) such that:
	\begin{enumerate}
		\item for any \(\pi \vdash \Gamma \Rightarrow_s \Delta\) proof in 
	\(\text{G}^\infty_\ell \text{K}^+_s + \textsf{Cut}\), we have
	\(\textsf{push}(\pi) \vdash \Gamma \Rightarrow_s \Delta\),
		\item \(\textsf{push}\) preserves sizes of cuts, and
		\item \(\textsf{push}\) preserves locality of cuts in witnesses.
	\end{enumerate}
\end{lemma}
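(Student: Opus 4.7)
Define \(\textsf{push}\) corecursively at the level of local fragments, using the framework of local-fragment corecursion from~\cite{previous}. Given \(\pi \vdash \Gamma \Rightarrow_s \Delta\) in \(\text{G}^\infty_\ell\text{K}^+_s + \textsf{Cut}\), set \(\pi' := \textsf{push-local}(\pi)\), a proof of the same sequent whose main local fragment contains no cuts. The leaves of this main local fragment that still belong to the main global fragment are exactly the right premises of \(\necm_f\) rule instances, since at \(\nec\) and \(\necm_u\) rules every premise is a witness and so is the left premise of \(\necm_f\). For each such right premise \(\rho\), corecursively take the next local fragment to be \(\textsf{push}(\rho)\). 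The output \(\textsf{push}(\pi)\) is thus the main local fragment of \(\pi'\) (together with its witnesses) with \(\textsf{push}(\rho)\) plugged in at each \(\necm_f\) right premise \(\rho\) of that fragment.

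This is a valid corecursive definition in the sense of~\cite{previous}: each step produces a whole main local fragment, and this fragment is finite since \(\textsf{push-local}\) preserves local height. The branch condition is preserved because any infinite branch of \(\textsf{push}(\pi)\) passes through infinitely many \(\necm_f\) right premises (contributing progress) and the annotation \(s\) stays constant throughout the main global fragment.

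The three stated properties follow by straightforward coinduction. Sequent preservation is immediate from that of \(\textsf{push-local}\) and the coinductive substitution. The main global fragment of \(\textsf{push}(\pi)\) is cut-free because its main local fragment is cut-free by \(\textsf{push-local}\) and each \(\textsf{push}(\rho)\) has a cut-free main global fragment by the coinductive hypothesis, placing the output in \(\text{G}^\infty_\ell\text{K}^+_s + \textsf{wCut}\). Preservation of sizes of cuts and of locality of cuts in witnesses both reduce to the corresponding preservation properties of \(\textsf{push-local}\) applied to the main local fragment, combined with the coinductive hypothesis applied to the subproofs at the \(\necm_f\) right premises. The principal obstacle is to cast the construction rigorously within the local-fragment corecursion principle of~\cite{previous} and verify that progress is preserved along every infinite branch; once that is done, the remaining verifications are routine.
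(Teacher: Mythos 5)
Your proposal is correct and follows essentially the same route as the paper: apply \(\textsf{push-local}\) corecursively through the main global fragment (one whole local fragment per corecursive step, continuing at the right premises of \(\necm_f\) instances), and then observe that each of the three properties holds of the whole proof because it holds of every local fragment produced by \(\textsf{push-local}\). Your additional remarks on productivity and preservation of the progress condition are consistent with the local-fragment corecursion framework the paper relies on.
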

\begin{proof}
	To obtain \(\textsf{push}\) it suffices to apply \(\textsf{push-local}\)
	corecursively through the main global fragment.
	By the first condition of \(\textsf{push-local}\) it is clear that the
	resulting proof will have no cuts in its main global fragment since all
	local fragments constituting the new proof are cut-free.

	If we have a bound \(n\) on the size of cuts, by the conditions of
	\(\textsf{push-local}\), we have that each of the local fragments in the new proof
	will have the same bound for cuts.
	But note that if each local fragment of a proof has all the cut-sizes bounded by \(n\), the all cut-sizes of the proof are bounded by \(n\).

	For the condition on witnesses we have the same, by \(\textsf{push-local}\) we know that if we start with a proof whose witnesses have local-cuts only then each local fragment in the resulting proof after the corecursion will have witnesses with local-cuts only.
	But a proof with such local fragments will be a proof whose witnesses have local-cuts only, as desired.
\end{proof}

% In the previous proof we needed to use two key facts: the properties 2 and 3 are true in a proof iff they are true in each of its fragments and the corecursion preserves the properties at the fragment level.
% Both conditions are necessary, for example if we talk about finiteness only the second fact is true, i.e.\ the corecursion guarantees that any proof fragment it produces is going to be finite.
% However, this does not imply that the resulting proof after the corecursion is finite.

\section{Cut admissibility}

Our strategy to show cut elimination will require first to show that the rule
\textsf{Cut} is admissible.
Notice that for finitary proofs it is straightforward to prove cut elimination
from cut admissibility by an induction on the height of the proof.
Our cut elimination will follow a similar approach, but it is more involved
since our notion of height is not so straightforward (we do not decrease height
by going to a child node but only by going to a witness).

With cut admissibility in \(\text{G}^\infty_\ell \text{K}^+_s\) we mean that 
for any \(\Gamma, \Delta, \chi\) if 
\(\text{G}^\infty_\ell \text{K}^+_s \vdash \Gamma \Rightarrow_s \Delta, \chi\)
and 
\(\text{G}^\infty_\ell \text{K}^+_s \vdash \chi, \Gamma \Rightarrow_s \Delta\)
then \(\text{G}^\infty_\ell\text{K}^+_s \vdash \Gamma \Rightarrow_s \Delta \).

If we talk about cut admissibility for \(\chi\) we mean the statement that for any \(\Gamma, \Delta\) if \(\text{G}^\infty_\ell \text{K}^+_s \vdash \Gamma \Rightarrow_s \Delta, \chi\) and \(\text{G}^\infty_\ell \text{K}^+_s \vdash \chi, \Gamma \Rightarrow_s \Delta\) then \(\text{G}^\infty_\ell\text{K}^+_s \vdash \Gamma \Rightarrow_s \Delta \), and if we talk about cut admissibility for \(\chi\) with cuts of height smaller than \(\alpha\) we mean the statement that for any \(\Gamma, \Delta, \pi, \tau\) if \(\pi \vdash \Gamma \Rightarrow_s \Delta, \chi\), \(\tau \vdash \chi, \Gamma \Rightarrow_s \Delta\) in \(\text{G}^\infty_\ell \text{K}^+_s\) and \(\|\pi\| \oplus \|\tau\| < \alpha\) (where \(\oplus\) is the Hessenberg sum of ordinals) then \(\text{G}^\infty_\ell \text{K}^+_s \vdash \Gamma \Rightarrow_s \Delta\).

\subsection{Atomic}

\begin{lemma}
	If \(\text{G}^\infty_\ell \text{K}^+_s \vdash \Gamma \Rightarrow_{ s }\Delta, p\) and \(\text{G}^\infty_\ell \text{K}^+_s \vdash p, \Gamma \Rightarrow_{ s }\Delta\), then \(\text{G}^\infty_\ell \text{K}^+_s \vdash \Gamma \Rightarrow_{ s }\Delta\).
\end{lemma}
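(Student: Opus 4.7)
The plan is to construct, by recursion on the local height \(|\pi|\), a cut-free proof of \(\Gamma \Rightarrow_s \Delta\) from cut-free proofs \(\pi \vdash \Gamma \Rightarrow_s \Delta, p\) and \(\tau \vdash p, \Gamma \Rightarrow_s \Delta\). Because main local fragments are finite, this recursion is finitary and well-founded. The key observation driving the whole argument is that \(p\), being atomic, can be principal only in the axiom rule; in every other rule of the calculus it appears as a side or weakening formula, so the various cases become essentially non-interactive with \(\tau\).

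If \(\pi\) is an instance of Ax with principal atom \(p\), then \(p \in \Gamma\), and a single application of \(\textsf{lctr}_p\) to \(\tau\) already delivers the desired cut-free proof of \(\Gamma \Rightarrow_s \Delta\). If \(\pi\) ends in Ax on some other atom \(q \neq p\), or in Ax-\(\bot\), then removing \(p\) from the succedent still yields an instance of the same axiom, and we simply return it.

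If \(\pi\) ends in \(\to\)L or \(\to\)R, I would commute the cut through the rule. The context of each premise of \(\pi\) differs from the context of \(\tau\) by exactly the principal implication \(\phi \to \psi\), so I apply the appropriate strongly preserving inversion \(\textsf{linv}^{0}_{\phi\to\psi}\), \(\textsf{linv}^{1}_{\phi\to\psi}\) or \(\textsf{rinv}_{\phi\to\psi}\) to \(\tau\) to obtain a cut-free proof whose context matches the corresponding premise of \(\pi\). The induction hypothesis is then applicable to each premise paired with this adjusted \(\tau\) (the premise has strictly smaller local height), and we close by reapplying the propositional rule. Strong preservation of the inversions ensures cut-freeness is maintained.

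The only remaining case is when \(\pi\) ends in a modal rule \(\nec\), \(\necm_f\) or \(\necm_u\). Since \(p\) is atomic, it is not of the form \(\nec\phi\) or \(\necm\phi\), so it must lie in the weakening part of the conclusion. Hence \(p\) appears neither in any witness \(\tau_0, \tau_1\) nor in the second premise of \(\necm_f\) (whose sequent is \(\Gamma', \dnecm\Pi \Rightarrow_\phi \necm\phi\)); we simply reapply the same modal rule with the same witnesses and the same second premise, dropping \(p\) from the weakening part of the conclusion. This would be the main obstacle if it required genuine treatment, since the witnesses are themselves non-wellfounded and would otherwise demand corecursion, but the fact that atomic cuts are absorbed by modal rules means the recursion never has to enter the non-wellfounded part of \(\pi\) or \(\tau\), and the local-progress condition is inherited unchanged.
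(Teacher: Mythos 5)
Your proof is correct and follows essentially the same strategy as the paper: a finitary recursion over local height, commuting the atomic cut through the propositional rules via the strongly preserving inversions and absorbing it into the weakening part of the modal rules, so that the non-wellfounded parts of the proofs are never entered. The only (harmless) difference is that you induct on \(|\pi|\) alone and case only on the last rule of \(\pi\), whereas the paper uses \(|\pi|+|\tau|\) with the symmetric cut reductions; both measures work because an atomic cut formula can be principal only in an axiom.
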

\begin{proof}
	We prove this by induction on \(|\pi| + |\tau|\) (i.e.\ the sum of local heights).
	The possible cut reductions are the axiomatic cases, the implication commutative cases and the weakening case in the modal cases.
	We note that in all these cases were a cut rule appears in the reduction it can be solved using the I.H.\ (i.e.\ the local height is smaller).
\end{proof}

\subsection{Box formulas}

\begin{lemma}
	If we have cut admissibility for cut formula \(\chi\) and \(\text{G}^\infty_\ell \text{K}^+_s\vdash \Gamma \Rightarrow_{ s }\Delta, \nec \chi\) and \(\text{G}^\infty_\ell \text{K}^+_s\vdash \nec \chi, \Gamma \Rightarrow_{ s }\Delta\), then \(\text{G}^\infty_\ell \text{K}^+_s\vdash \Gamma \Rightarrow_{ s }\Delta\).
\end{lemma}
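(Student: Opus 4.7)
The plan is to proceed by induction on $|\pi| + |\tau|$, the sum of local heights, following the template of the atomic lemma and introducing a new principal-principal reduction that will consume the hypothesis of cut admissibility for $\chi$. Write $\pi \vdash \Gamma \Rightarrow_s \Delta, \nec\chi$ and $\tau \vdash \nec\chi, \Gamma \Rightarrow_s \Delta$.

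Axiomatic cases and cases where $\nec\chi$ lies in a weakening part of a modal rule at the bottom of $\pi$ or $\tau$ are handled by simply reapplying the final rule without $\nec\chi$ (note that $\nec\chi$ is never principal on the left, so if the bottom rule of $\tau$ is modal and $\nec\chi$ is not inside its boxed-context part $\nec \Gamma_\tau$, it can only be absorbed into $\Sigma_\tau$). For the commutative implication cases I would use the invertibility auxiliary functions. For example, if $\pi$ ends with $\to$L on $\phi \to \psi \in \Gamma$, I apply $\textsf{linv}^{0}_{\phi \to \psi}$ and $\textsf{linv}^{1}_{\phi \to \psi}$ to $\tau$ to strip $\phi \to \psi$ from its left-hand side, cut the resulting proofs against the two premises of $\pi$'s $\to$L by the I.H.\ (which applies since inversion strongly preserves local height, so the local-height sum strictly decreases), and then reapply $\to$L. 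The symmetric situation where $\tau$ ends with an implication rule is analogous, and $\to$R is handled similarly using $\textsf{rinv}$.

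The heart of the proof is the principal-principal case: $\pi$ ends with a $\nec$ rule introducing $\nec\chi$ with witness $\tau_\pi \vdash \Gamma_\pi, \dnecm\Pi_\pi \Rightarrow_\circ \chi$, while $\tau$ ends with a modal rule ($\nec$, $\necm_f$, or $\necm_u$) whose partition of its left-hand side places $\nec\chi$ into its boxed-context part, so that $\Gamma_\tau = \chi, \Gamma_\tau'$. I would weaken the auxiliary proofs of both rules with $\textsf{wk}$ so that they share the common modal context $\Gamma_\pi, \Gamma_\tau', \dnecm\Pi_\pi, \dnecm\Pi_\tau$, invoke the hypothesis (cut admissibility for $\chi$) on the $\circ$-annotated witness/premise to eliminate $\chi$, and then reapply the same modal rule of $\tau$, choosing its weakening part $\Sigma$ so that the conclusion is exactly $\Gamma \Rightarrow_s \Delta$. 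If $\tau$ ends with $\necm_f$ or $\necm_u$, the cut on $\chi$ must additionally be performed on the $\psi$-annotated premise or witness; for this I would weaken $\tau_\pi$ to include $\necm\psi$ on the right and then apply the change-of-annotation construction of Section~\ref{sec:change-of-annotations} to obtain a $\psi$-annotated cut partner before invoking the hypothesis again.

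The main obstacle is precisely this principal case, and especially the $\necm_f$ subcase: its right premise is a progressing subproof rather than a witness, so no ordinal-height reduction is available there; but the hypothesis of cut admissibility for $\chi$ is stated globally for $\text{G}^\infty_\ell \text{K}^+_s$ across all annotations, so it applies directly after change of annotation. The remaining technical work — checking that the chosen weakening part $\Sigma$ matches the original $\Gamma$ as a multiset, and that the focus/annotation side conditions distinguishing $\necm_f$ and $\necm_u$ are respected in the rebuilt derivation — is bookkeeping but needs care to keep the induction clean.
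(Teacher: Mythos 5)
Your proposal is correct and follows essentially the same route as the paper: induction on the sum of local heights, with the axiomatic, weakening and commutative implication cases handled via the strongly preserving auxiliary functions and the I.H., and the modal cases ($\nec$-$\nec$, $\nec$-$\necm_u$, $\nec$-$\necm_f$) discharged by invoking the hypothesis of cut admissibility for \(\chi\) after weakening and change of annotation, exactly as in the paper's cut reductions. Your observation that the \(\necm_f\) subcase needs no height decrease because the admissibility hypothesis for \(\chi\) is used as a black box across all annotations is precisely the point the paper relies on.
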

\begin{proof}
	We prove this by induction on \(|\pi| + |\tau|\) (i.e.\ the sum of local heights).
	The possible cut reductions are the axiomatic cases where the axiomatic character is in a side formula, the implication commutative cases, the weakening case and \(\nec\)-\(\nec\), \(\nec\)-\(\necm_u\) and  \(\nec\)-\(\necm_f\) in the modal cases.
	In the axiomatic and weakening cases, no cut remains after the reduction.
	In the implication commutative cases, we can use the I.H.\ when a \(\textsf{cut}_i\) appears after the reduction since the sum of local heights is smaller.
	Finally, for the  \(\nec\)-\(\nec\), \(\nec\)-\(\necm_u\) and  \(\nec\)-\(\necm_f\)
	cases it suffices to use the assumption of cut admissiblity for \(\chi\).
\end{proof}

\subsection{Master formula}

\begin{definition}
	We say that a proof is \((\necm \chi, \alpha)\)-unblocked iff all its cuts have cut formula \(\chi\) or \(\necm \chi\) and in case the cut formula is \(\necm \chi\) then the height of the cut is \(\leq \alpha\) and the subproofs at the premises are proofs in \(\text{G}^\infty_\ell \text{K}^+_s\) (i.e.\ they have not cuts).
\end{definition}

\begin{lemma}\label{lm:cut-adm-master-top}
	Assume we have cut admissibility for \(\chi\) (with cuts of any height) and also for \(\necm \chi\) with cuts of height smaller than \(\alpha\).
	If \(\pi \vdash \Gamma \Rightarrow_{ s }\Delta, \necm \chi\) and \(\tau \vdash \necm \chi, \Gamma \Rightarrow_{ s }\Delta\) in \(\text{G}^\infty_\ell \text{K}^+_s\) such that \(\|\pi\| \oplus \|\tau\| \leq \alpha\), then there is a \(\rho \vdash \Gamma \Rightarrow_{ s }\Delta\) in \(\text{G}^\infty_\ell \text{K}^+_s + \textsf{mCut}\) 
	that is \((\necm \chi, \alpha)\)-unblocked and has no cuts in its main local fragment.
\end{lemma}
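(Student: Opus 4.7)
The plan is to proceed by induction on $|\pi|+|\tau|$ (sum of local heights) with case analysis on the last rules of $\pi$ and $\tau$; in every case we will build $\rho$ cut-free, so both the $(\necm\chi,\alpha)$-unblockedness condition and absence of cuts in the main local fragment will be automatic.

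Axiomatic cases and cases where $\necm\chi$ lies in the weakening part of a modal rule yield $\rho$ directly using the weakening function $\textsf{wk}$; implication-commutative cases (where $\necm\chi$ is non-principal in a $\to$L or $\to$R at the root of $\pi$ or $\tau$) push the cut past the $\to$-rule via the inversion lemmas $\textsf{linv}^0$, $\textsf{linv}^1$, $\textsf{rinv}$, producing strictly smaller-local-height pairs to which the induction hypothesis applies, and then reapply the $\to$-rule at the new root. This pattern is identical to the earlier cut-admissibility lemmas for atomic and box formulas.

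The main obstacle is the principal modal case: $\pi$ ends with $\necm_f$ or $\necm_u$ introducing $\necm\chi$ principally on the right, and $\tau$ ends with a modal rule ($\nec$, $\necm_f$, or $\necm_u$) with $\chi\in\Pi_\tau$, so that $\chi$ and $\necm\chi$ appear in the antecedent $\dnecm\Pi_\tau$ of each of $\tau$'s modal premises. Write $\upsilon_0\vdash^{\beta_0}\Gamma_\pi,\dnecm\Pi_\pi\Rightarrow_\circ\chi$ and $\upsilon_1\vdash^{\beta_1}\Gamma_\pi,\dnecm\Pi_\pi\Rightarrow_\chi\necm\chi$ for the modal premises of $\pi$'s last rule. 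I would construct $\rho$ by reapplying $\tau$'s last modal rule at the root (producing the same principal succedent formula of $\tau$) and, for each of its modal premises, building a cut-free replacement by cutting $\necm\chi$ out of $\tau$'s corresponding premise against a weakened and annotation-adjusted copy of $\upsilon_1$, then cutting $\chi$ out against a weakened copy of $\upsilon_0$. The $\necm\chi$-cut so formed has ordinal height at most $(\|\upsilon_1\|+1)\oplus\gamma$ for some $\gamma<\|\tau\|$; since $\|\upsilon_1\|=\beta_1<\|\pi\|$, this is strictly less than $\|\pi\|\oplus\|\tau\|\leq\alpha$ by strict monotonicity of the Hessenberg sum. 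Its premises are cut-free, so the hypothesis of cut admissibility for $\necm\chi$ at heights $<\alpha$ applies and eliminates it; the $\chi$-cut is then eliminated by the hypothesis of cut admissibility for $\chi$, which holds at any height. The resulting premise is cut-free, so $\rho$ is cut-free with main local fragment consisting only of the final modal rule.

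Context and annotation matching is handled by $\textsf{wk}$ and by the change-of-annotation translation $\pi\mapsto\pi^{s'}$, both of which are weakly preserving and in particular send cut-free proofs to cut-free proofs. The hardest part of the argument will be the bookkeeping that shows the intermediate $\necm\chi$-cut's ordinal height stays strictly below $\alpha$ even after annotation change bumps the height of $\upsilon_1$ by one — this is where the inequality $\beta_1+1\leq\|\pi\|$ combined with strict monotonicity of the Hessenberg sum does the work, and also where one must verify that the required weakenings and focus changes on both sides really do make the cut well-formed with matching contexts.
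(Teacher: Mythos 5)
There is a genuine gap, and it sits exactly where the lemma's unusual conclusion (``$(\necm\chi,\alpha)$-unblocked with no cuts in the main local fragment'' rather than ``cut-free'') is doing its work. You announce that in every case you will build $\rho$ \emph{cut-free}, and your height bookkeeping for the principal modal case rests on the claim that the $\necm\chi$-cut you form has height at most $(\|\upsilon_1\|+1)\oplus\gamma$ for some $\gamma<\|\tau\|$. That inequality holds when the corresponding premise of $\tau$'s last modal rule is a \emph{witness} (the $\necm_u$-$\nec$ and $\necm_u$-$\necm_u$ reductions), because witnesses have strictly smaller ordinal height. But when $\tau$ ends with $\necm_f$, its right premise $\tau_1 \vdash \dnecm\chi,\Gamma_1,\dnecm\Pi_1\Rightarrow_\phi\necm\phi$ is a \emph{subproof}, not a witness, so you only get $\|\tau_1\|\leq\|\tau\|$ and the resulting $\necm\chi$-cut has height $\leq\alpha$, not $<\alpha$. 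The hypothesis of cut admissibility for $\necm\chi$ only covers heights strictly below $\alpha$, so you cannot discharge that cut, and your plan to return a fully cut-free $\rho$ fails in the $\necm_u$-$\necm_f$ case. (A smaller slip: since the case where the annotation $s$ coincides with $\chi$ is dispatched first by right contraction, $\pi$'s last rule introducing $\necm\chi$ principally must be $\necm_u$, never $\necm_f$.)

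The paper's proof handles this by \emph{not} eliminating that cut: in the $\necm_u$-$\necm_f$ reduction it leaves a standing cut on $\necm\chi$ (and a companion cut on $\chi$) above the reconstructed $\necm_f$ rule. Because progress occurs at the right premise of $\necm_f$, these residual cuts lie outside the main local fragment; their premises are cut-free and the $\necm\chi$-cut has height $\leq\alpha$, which is precisely what the $(\necm\chi,\alpha)$-unblocked condition permits. The residual cuts are then removed later, by iterating this lemma through the local fragments (Lemmas~\ref{lm:cut-adm-master-local} and~\ref{lm:cut-adm-master-aux}) and ultimately by the outer induction on $\|\pi\|\oplus\|\tau\|$. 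To repair your argument you must weaken your invariant from ``$\rho$ is cut-free'' to the unblockedness condition actually stated in the lemma, and in the $\necm_f$ case explicitly retain the cut rather than appeal to an admissibility hypothesis that does not apply at height $\alpha$.
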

\begin{proof}
	By induction on the sum of the local height of \(\pi\) and \(\tau\).
	The possible cut reductions are the axiomatic cases where the label is the cut formula, the axiomatic character is in a side formula, the commutative implication cases and the modal cases of weakening or \(\necm_u\)-\(\nec\), \(\necm_u\)-\(\necm_u\) and \(\necm_u\)-\(\necm_f\).
	In the reduction rules where the label is the cut formula, axiomatic and modal weakening no cuts remain, so clearly the desired result holds.

	In the commutative implication reductions it suffices to substitute each \(\textsf{cut}_i\) with an application of the I.H.

	Finally, we treat the \(\necm_u\)-\(\nec\), \(\necm_u\)-\(\necm_u\) and \(\necm_u\)-\(\necm_f\) reductions.
	By inspection we see that all the main local fragments have no cuts.
	Finally, let us see how to interpret \(\textsf{cut}_i\) in each case to get a proof with main cuts only and the unblocked condition.

	\(\necm_u\)-\(\nec\). In this case \(\textsf{cut}_1\) is admissibility of cuts with cut formula \(\necm \chi\) and height smaller than \(\alpha\) and \(\textsf{cut}_2\) is admissibility of cuts with cut formula \(\chi\). We are allowed to used the first admissibility for the \(\textsf{cut}_1\) since
	\begin{multline*}
	  \|{\textsf{wk}(\pi_1)}^\circ\| \oplus \| \textsf{wk}(\tau_0) \| \leq 	(\|{\textsf{wk}(\pi_1)} \| + 1) \oplus \| \textsf{wk}(\tau_0) \| \leq \\  (\|\pi_1\| + 1) \oplus \| \tau_0 \| \leq \| \pi \| \oplus \| \tau_0 \| < \| \pi \| \oplus \|\tau\| = \alpha
	\end{multline*}
	thanks to \(\|\pi_1\| < \|\pi\|, \|\tau_0\| < \|\tau\|\) being witnesses.
	This gives a proof without cuts so it has main cuts only and it is unblocked, as desired.

	\(\necm_u\)-\(\nec_u\). In this case \(\textsf{cut}_1, \textsf{cut}_3\) are by admissibility of cuts with cut formula \(\necm \chi\) and height smaller than \(\alpha\) and \(\textsf{cut}_2, \textsf{cut}_4\) is admissibility of cuts with cut formula \(\chi\). We are allowed to used the first admissibility for \(\textsf{cut}_1\) and \(\textsf{cut}_3\) since
	\begin{multline*}
	  \|{\textsf{wk}(\pi_1)}^\circ\| \oplus \| \textsf{wk}(\tau_0) \| \leq 	(\|{\textsf{wk}(\pi_1)} \| + 1) \oplus \| \textsf{wk}(\tau_0) \| \leq \\  (\|\pi_1\| + 1) \oplus \| \tau_0 \| \leq \| \pi \| \oplus \| \tau_0 \| < \| \pi \| \oplus \|\tau\| = \alpha
	\end{multline*}
	\begin{multline*}
	  \|{\textsf{wk}(\pi_1)}^\phi\| \oplus \| \textsf{wk}(\tau_1) \| \leq 	(\|{\textsf{wk}(\pi_1)} \| + 1) \oplus \| \textsf{wk}(\tau_1) \| \leq \\  (\|\pi_1\| + 1) \oplus \| \tau_1 \| \leq \| \pi \| \oplus \| \tau_1 \| < \| \pi \| \oplus \|\tau\| = \alpha
	\end{multline*}
	thanks to \(\|\pi_1\| < \|\pi\|, \|\tau_0\|, \|\tau_1\| < \|\tau\|\) being witnesses.
	This gives a proof without cuts so it has main cuts only and it is unblocked, as desired.
	
	\(\necm_u\)-\(\nec_f\). In this case \(\textsf{cut}_1\) is by admissibility of cuts with cut formula \(\necm \chi\) and height smaller than \(\alpha\) and \(\textsf{cut}_2\) is by admissibility of cuts with cut formula \(\chi\) and \(\textsf{cut}_3, \textsf{cut}_4\) are standard cuts. We are allowed to used the first admissibility for \(\textsf{cut}_1\) since
	\begin{multline*}
	  \|{\textsf{wk}(\pi_1)}^\circ\| \oplus \| \textsf{wk}(\tau_0) \| \leq 	(\|{\textsf{wk}(\pi_1)} \| + 1) \oplus \| \textsf{wk}(\tau_0) \| \leq \\  (\|\pi_1\| + 1) \oplus \| \tau_0 \| \leq \| \pi \| \oplus \| \tau_0 \| < \| \pi \| \oplus \|\tau\| = \alpha
	\end{multline*}
	thanks to \(\|\pi_1\| < \|\pi\|, \|\tau_0\| < \|\tau\|\) being witnesses.
	There are two cuts after the reduction, one with cut formula \(\chi\) (\(\textsf{cut}_4\)) and other with cut formula \(\necm \chi\) (\(\textsf{cut}_3\)).
	\(\textsf{cut}_3\) has the premises with no cuts, and its height is
	\begin{multline*}
	  \|{\textsf{wk}(\pi_1)}^\phi\| \oplus \| \textsf{wk}(\tau_1) \| \leq 	(\|{\textsf{wk}(\pi_1)} \| + 1) \oplus \| \textsf{wk}(\tau_1) \| \leq \\  (\|\pi_1\| + 1) \oplus \| \tau_1 \| \leq \| \pi \| \oplus \| \tau_1 \| \leq \| \pi \| \oplus \|\tau\| = \alpha
	\end{multline*}
	thanks to \(\|\pi_1\| < \|\pi\|\) being a witness and \(\|\tau_1\| \leq \|\tau\|\) being a subproof.
	This means that the resulting proof has only main cuts and it is \((\necm \chi, \alpha)\)-unblocked.
\end{proof}

\begin{lemma}\label{lm:cut-adm-master-local}
	Assume we have cut admissibility for \(\chi\) (with cuts of any height) and also for \(\necm \chi\) with cuts of height smaller than \(\alpha\).
	Let \(\pi \vdash \Gamma \Rightarrow_s \Delta\) in \(\text{G}^\infty_\ell \text{K}^+_s + \textsf{mCut}\) and assume it is \((\necm \chi, \alpha)\)-unblocked.
	Then, there is a \(\rho \vdash \Gamma \Rightarrow_s \Delta\) in \(\text{G}^\infty_\ell \text{K}^+_s + \textsf{mCut}\) that is \((\necm \chi, \alpha)\)-unblocked and has no cuts with cut formula \(\necm \chi\) in its main local fragment.
\end{lemma}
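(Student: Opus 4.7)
The plan is to proceed by induction on the (finite) number \(n\) of cuts with cut formula \(\necm\chi\) occurring in the main local fragment of \(\pi\). Recall that in a local-progress system each local fragment is finite, since an infinite branch confined to a single fragment would violate the progress condition; hence \(n\) is well-defined. Also observe that by the \((\necm\chi,\alpha)\)-unblocked hypothesis, the premises of any \(\necm\chi\)-cut are cut-free subproofs, so no two \(\necm\chi\)-cuts can sit one above the other.

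If \(n = 0\) we take \(\rho = \pi\). Otherwise pick any \(\necm\chi\)-cut \(c\) in the main local fragment of \(\pi\) and let \(\pi_0, \pi_1\) be the subproofs at its premises. By the unblocked hypothesis, \(\pi_0\) and \(\pi_1\) are cut-free proofs in \(\text{G}^\infty_\ell \text{K}^+_s\) and \(\|\pi_0\| \oplus \|\pi_1\| \leq \alpha\). Lemma~\ref{lm:cut-adm-master-top} therefore applies and yields a \((\necm\chi,\alpha)\)-unblocked proof \(\rho'\) of the conclusion of \(c\) with no cuts in its main local fragment. We form \(\pi'\) from \(\pi\) by replacing the subproof rooted at \(c\) with \(\rho'\).

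Before invoking the induction hypothesis on \(\pi'\) we check two things: (i) that \(\pi'\) is still \((\necm\chi,\alpha)\)-unblocked, and (ii) that the count of \(\necm\chi\)-cuts in its main local fragment has strictly decreased. Point (i) is immediate, since every cut in \(\pi'\) comes either from \(\pi\) or from \(\rho'\), and both sources satisfy the unblocked conditions. For (ii), the splicing point lies in the main local fragment of \(\pi\), so the main local fragment of \(\pi'\) extends into \(\rho'\) only up to \(\rho'\)'s first modal rule\,---\,that is, exactly through \(\rho'\)'s own main local fragment, which is cut-free by construction. Hence no new \(\necm\chi\)-cut enters the main local fragment while \(c\) is removed. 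Applying the induction hypothesis to \(\pi'\) produces the desired \(\rho\). The only real obstacle is the structural bookkeeping about how local fragments behave under splicing; the substantive content of the reduction has already been absorbed by Lemma~\ref{lm:cut-adm-master-top}.
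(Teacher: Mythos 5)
Your proof is correct and takes essentially the same route as the paper's: an induction on the number of \(\necm\chi\)-cuts in the main local fragment, using Lemma~\ref{lm:cut-adm-master-top} (made applicable by the \((\necm\chi,\alpha)\)-unblocked hypothesis, which guarantees cut-free premises of the right combined height) to remove one such cut per step. The splicing bookkeeping you supply is exactly the justification the paper leaves implicit.
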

\begin{proof}
	This is a simple induction using Lemma~\ref{lm:cut-adm-master-top} in the
	number of cuts with cut formula \(\necm \chi\) in its main local fragment.
	The assumption that \(\pi\) is \((\necm \chi, \alpha)\)-unblocked is of
	fundamental importance since it allows use to use
	Lemma~\ref{lm:cut-adm-master-top} to any cut at the main local fragment with
	cut formula \(\necm \chi\).
\end{proof}

\begin{lemma}\label{lm:cut-adm-master-aux}
	Assume we have cut admissibility for \(\chi\) (with cuts of any height) and also for \(\necm \chi\) with cuts of height smaller than \(\alpha\).	
	Then \(\text{G}^\infty_\ell \text{K}^+_s \vdash \Gamma \Rightarrow_{ s }\Delta, \necm \chi\) and \(\text{G}^\infty_\ell \text{K}^+_s \vdash \necm \chi, \Gamma \Rightarrow_{ s }\Delta\), implies \(\text{G}^\infty_\ell \text{K}^+_s + \textsf{mCut} \vdash \Gamma \Rightarrow_{ s }\Delta\) such that all the cuts have \(\chi\) as cut formula.
\end{lemma}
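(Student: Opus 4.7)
The plan is to build the desired $\textsf{mCut}$-proof as a corecursive construction whose basic step is one application of Lemma~\ref{lm:cut-adm-master-local}, in the spirit of the local-fragment corecursion from~\cite{previous}. First I would combine the two given cut-free inputs via Lemma~\ref{lm:cut-adm-master-top} to produce a seed proof $\rho_0 \vdash \Gamma \Rightarrow_s \Delta$ in $\text{G}^\infty_\ell\text{K}^+_s+\textsf{mCut}$ that is $(\necm\chi,\alpha)$-unblocked and has no cuts in its main local fragment. (This step implicitly requires that the heights of the two inputs satisfy $\|\pi\|\oplus\|\tau\|\leq\alpha$; I read this as part of the set-up in which Lemma~\ref{lm:cut-adm-master-aux} is going to be invoked, since otherwise even the seed step is ill-typed.)

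Next I would define corecursively a function $F$ from $(\necm\chi,\alpha)$-unblocked proofs in $\text{G}^\infty_\ell\text{K}^+_s+\textsf{mCut}$ to proofs of the same sequent in the same system whose cuts all have $\chi$ as cut formula. Given $\rho$, the body of $F$ first applies Lemma~\ref{lm:cut-adm-master-local} to obtain $\rho'$, still $(\necm\chi,\alpha)$-unblocked but with no $\necm\chi$-cuts in its main local fragment. By the definition of unblockedness the cuts in that fragment can then only be $\chi$-cuts. I would keep the main local fragment of $\rho'$ verbatim, together with its witnesses (which are cut-free because $\rho'$ is an $\textsf{mCut}$-proof), and at every right premise of a $\necm_f$ occurring in that fragment I splice in $F$ applied to the subproof of $\rho'$ rooted there. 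This recursive call is well-typed because $(\necm\chi,\alpha)$-unblockedness is a global condition on the cuts of a proof and is therefore inherited by all subproofs of $\rho'$. The desired proof is then $F(\rho_0)$.

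Once this construction is in place I would check three invariants. Guardedness is immediate: every unfolding emits a non-empty finite local fragment, so we are exactly in the corecursive-over-local-fragments framework of~\cite{previous}. The property that every cut in the main global fragment has cut formula $\chi$ is clear by construction, since every emitted local fragment is the main local fragment of some output of Lemma~\ref{lm:cut-adm-master-local}. Finally, witnesses stay cut-free all the way, since they are inherited from intermediate $\textsf{mCut}$-proofs and never touched.

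The main obstacle, in my estimation, is not the content of any single step but rather the clean formulation of the corecursion at the right level of abstraction: one must see $F$ as emitting an entire finite local fragment (rather than a single inference) per unfolding, and one must verify that the $(\necm\chi,\alpha)$-unblockedness invariant persists across recursive calls so that Lemma~\ref{lm:cut-adm-master-local} remains applicable at every level of the construction. Everything else is bookkeeping on top of Lemmas~\ref{lm:cut-adm-master-top} and~\ref{lm:cut-adm-master-local}.
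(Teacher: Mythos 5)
Your proposal is correct and takes essentially the same route as the paper, whose entire proof is ``a corecursion for a local progressing system (i.e.\ at each corecursive step we provide a whole local fragment, not just a node) using Lemma~\ref{lm:cut-adm-master-local}'' --- exactly the function \(F\) you describe, with the unblockedness invariant carried through subproofs. Your side remark that the seed step needs \(\|\pi\|\oplus\|\tau\|\leq\alpha\) is a fair catch: the condition is not in the statement but holds in the lemma's only application, where \(\alpha\) is taken to be \(\|\pi\|\oplus\|\tau\|\).
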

\begin{proof}
	This is just a corecursion for a local progressing system (i.e.\ at each
	corecursive step we provide a whole local fragment, not just a node) using
	Lemma~\ref{lm:cut-adm-master-local}.
\end{proof}

\begin{lemma}
	Assume we have cut admissibility for cuts of size smaller than \(n\) and let \(|\necm \chi| = n\).
	Then \(\text{G}^\infty_\ell \text{K}^+_s \vdash \Gamma \Rightarrow_{ s }\Delta, \necm \chi\) and \(\text{G}^\infty_\ell \text{K}^+_s \vdash \necm \chi, \Gamma \Rightarrow_{ s }\Delta\) implies \mbox{\(\text{G}^\infty_\ell \text{K}^+_s \vdash \Gamma \Rightarrow_{ s }\Delta\)}.
\end{lemma}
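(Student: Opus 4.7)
The plan is to establish cut admissibility for $\necm\chi$ by transfinite induction on the cut-height $\alpha = \|\pi\| \oplus \|\tau\|$. The inductive hypothesis supplies cut admissibility for $\necm\chi$ at all heights strictly below $\alpha$, while cut admissibility for $\chi$ is already available from the outer assumption since $|\chi| < n$.

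At the inductive step I first invoke Lemma~\ref{lm:cut-adm-master-aux} with this $\alpha$, obtaining a proof $\rho$ of $\Gamma \Rightarrow_s \Delta$ in $\text{G}^\infty_\ell \text{K}^+_s + \textsf{mCut}$ all of whose cuts have $\chi$ as cut formula. I then apply $\textsf{push}$ (Lemma~\ref{lm:cuts-outside-main-fragment}) to $\rho$, obtaining a proof $\rho'$ in $\text{G}^\infty_\ell \text{K}^+_s + \textsf{wCut}$. Because $\rho$ is in $+\textsf{mCut}$, its witnesses are cut-free and hence vacuously have only local cuts; the preservation of locality of cuts in witnesses clause of $\textsf{push}$ transfers this property to the witnesses of $\rho'$, and preservation of cut-sizes guarantees that every cut appearing in $\rho'$ is still of size strictly less than $n$.

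To finish, every remaining cut of $\rho'$ now sits in the main local fragment of some witness; this fragment is a finite tree, so each witness carries only finitely many cuts, and because each such witness is itself in $+\textsf{mCut}$, all of its sub-witnesses are cut-free. For each witness I iteratively remove its cuts by selecting a topmost one: both premises are then cut-free (the topmost choice excludes further cuts above within the main local fragment, and the \emph{local cuts only} property excludes cuts elsewhere in the witness), so the outer hypothesis of cut admissibility for cut-formulas of size $< n$ yields a cut-free replacement. Iterating exhausts the finite main local fragment and leaves the witness cut-free; replacing every witness of $\rho'$ in this way produces the desired cut-free proof in $\text{G}^\infty_\ell \text{K}^+_s$.

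The main obstacle is the bookkeeping surrounding the passage from $+\textsf{mCut}$ to $+\textsf{wCut}$ via $\textsf{push}$: one has to verify that the resulting witnesses really do carry only finitely many cuts and that the premises of those cuts are genuinely cut-free, so that the finitary, single-formula admissibility provided by the outer hypothesis is enough to complete the elimination without any further corecursive machinery.
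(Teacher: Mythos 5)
Your proposal is correct and follows essentially the same route as the paper: induction on $\|\pi\| \oplus \|\tau\|$ to discharge the hypothesis of Lemma~\ref{lm:cut-adm-master-aux}, then \textsf{push} to relocate the remaining size-$<n$ cuts into the (finite) main local fragments of cut-free-witnessed witnesses, and finally a finitary induction on the number of cuts per witness using the assumed admissibility for sizes below $n$. Your extra bookkeeping about topmost cuts having cut-free premises is a correct elaboration of the paper's terser final step.
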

\begin{proof}
	Let \(\pi \vdash \Gamma \Rightarrow_s \Delta, \necm \chi\) and \(\tau \vdash \necm \chi, \Gamma \Rightarrow_s \Delta\).
	We proceed by induction on \(\|\pi\| \oplus \|\tau\|\), so assume we have cut admissibility for \(\necm\chi\) with cuts of height smaller than
	\(\|\pi\| \oplus \|\tau\|\).
	We also have cut admisibility for \(\chi\) thanks to the assumptions.
	Then by Lemma~\ref{lm:cut-adm-master-aux} we can get a proof \(\rho_0\) of the
	same sequent in \(\text{G}^\infty_\ell\text{K}^+_s + \textsf{mCut}\) whose
	only cut formula is \(\chi\).

	Then we can use Lemma~\ref{lm:cuts-outside-main-fragment} to obtain a proof 
	\(\rho_1\) with witness cuts only and all of them of size smaller than \(n\)
	and each ocurring at the main local fragment of some witness (so each witness have
	a finite amount of cuts).
	By an induction on the number of local cuts in the witness and using the 
	hypothesis of cut admissibility for size \(< n\) we can change each witness 
	to a cut-free witness, obtaining the desired proof.
\end{proof}

\subsection{General case}

\begin{theorem}[Cut admissibility]\label{th:cut-admissibility}
	Let \(\text{G}^\infty_\ell \text{K}^+_s \vdash \Gamma \Rightarrow_s \Delta, \chi\) and \(\text{G}^\infty_\ell \text{K}^+_s \vdash \chi, \Gamma \Rightarrow_s \Delta\).
	Then, we have that \(\text{G}^\infty_\ell \text{K}^+_s \vdash \Gamma \Rightarrow_s \Delta\).
\end{theorem}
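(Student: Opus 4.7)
The plan is to prove the theorem by induction on the size $|\chi|$ of the cut formula, reducing each case to one of the case-specific admissibility lemmas already established in this section, together with the auxiliary invertibility functions.

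For the base cases, $\chi$ is atomic. If $\chi = p$ is a propositional variable, the lemma on atomic cut admissibility applies directly. If $\chi = \bot$, the second premise is not even needed: applying $\textsf{inv}_\bot$ to the proof of $\Gamma \Rightarrow_s \Delta, \bot$ immediately yields a proof of $\Gamma \Rightarrow_s \Delta$.

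For the inductive step, assume cut admissibility for every formula of size strictly smaller than $|\chi|$, and split on the outermost connective. If $\chi = \nec \chi_0$, the box-formula lemma applies, since it requires only cut admissibility for $\chi_0$, which is available by IH. If $\chi = \necm \chi_0$, the final master-formula lemma applies; its hypothesis is cut admissibility for all formulas of size smaller than $|\necm \chi_0|$, which is exactly what the strong IH provides. For $\chi = \chi_0 \to \chi_1$, apply the invertibility functions to the two premises $\pi \vdash \Gamma \Rightarrow_s \Delta, \chi_0 \to \chi_1$ and $\tau \vdash \chi_0 \to \chi_1, \Gamma \Rightarrow_s \Delta$ to obtain $\textsf{rinv}_{\chi_0 \to \chi_1}(\pi) \vdash \Gamma, \chi_0 \Rightarrow_s \Delta, \chi_1$, $\textsf{linv}^0_{\chi_0 \to \chi_1}(\tau) \vdash \Gamma \Rightarrow_s \Delta, \chi_0$, and $\textsf{linv}^1_{\chi_0 \to \chi_1}(\tau) \vdash \Gamma, \chi_1 \Rightarrow_s \Delta$. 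Invoke the IH to cut on $\chi_0$ between the first two, producing a proof of $\Gamma \Rightarrow_s \Delta, \chi_1$, and then invoke the IH again to cut on $\chi_1$ against $\textsf{linv}^1_{\chi_0 \to \chi_1}(\tau)$, yielding $\Gamma \Rightarrow_s \Delta$. Both cut formulas are strictly smaller than $\chi$, so the IH is legitimately applied.

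All the genuine difficulty has been absorbed into the master-formula lemma (which itself unfolded an ordinal-height side-induction plus the cut-pushing machinery of Section on pushing cuts), so the present theorem is essentially bookkeeping. The only point requiring a little care is choosing the order of the two successive cuts in the implication case so that each one is performed on a proper subformula of $\chi$; with the ordering above this is automatic.
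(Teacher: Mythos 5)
Your proposal is correct and follows essentially the same route as the paper: induction on $|\chi|$, dispatching atomic, $\nec$, and $\necm$ cases to the respective lemmas and handling implication via the inversion functions followed by two cuts on proper subformulas. The only detail you elide is that before the cut on $\chi_0$ you must weaken $\textsf{linv}^0_{\chi_0\to\chi_1}(\tau)$ by $\chi_1$ on the right so that the contexts of the two premises match the form required by the admissibility statement, which is exactly the ``weakening to make the sequents match'' step the paper mentions.
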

\begin{proof}
	This is a simple induction on the size of \(\chi\) using the lemmas previously proved in this section.
	In case \(\chi\) is an implication, i.e.\ of shape \(\chi_0 \to \chi_1\), first we need to apply the inversion to get formulas of smaller size (and weakening to make the sequents match).
\end{proof}

With cut admissibility we can show cut elimination for proofs with finitely many cuts.
However, in the non-wellfounded setting it is not straightforward that this gives us cut elimination for any proof, since our proofs may have infinitely many cuts.
The purpose of the next section will be to show cut elimination in full generality for \(\text{G}^\infty_\ell \text{K}^+_s\) using cut admissibility, thus providing and example of how to get cut elimination from admissibility in the non-wellfounded setting.
Before finalizing this section we show a corollary which we will use during cut elimination.

\begin{corollary}\label{cor:cut-admissibility}
	Suppose that \(\pi \vdash \Gamma \Rightarrow_s \Delta\) in \(\text{G}^\infty_\ell \text{K}^+_s + \textsf{Cut}\).
	Assume that either:
	\begin{enumerate}
		\item \(\pi\) has finitely many instances of cut, or
		\item \(\pi\) has local cuts only.
	\end{enumerate}
	Then \(\text{G}^\infty_\ell \text{K}^+_s \vdash \Gamma \Rightarrow_s \Delta \).
\end{corollary}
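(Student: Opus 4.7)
My plan is to reduce both cases to a single induction on the total number of cut instances occurring in $\pi$, using Theorem~\ref{th:cut-admissibility} at each step.

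First I would observe that case 2 is subsumed by case 1. By definition, the hypothesis ``$\pi$ has local cuts only'' means $\pi$ is a proof in $\text{G}^\infty_\ell \text{K}^+_s + \textsf{mCut}$ whose cuts all lie in its main local fragment. Since witnesses in the $\textsf{mCut}$ system are cut-free (and so are their witnesses, and so on), and since any main local fragment is a finite tree, $\pi$ contains only finitely many cut instances in total.

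It then remains to handle case 1, which I would attack by induction on the number of cuts. The base case is trivial, as a proof with zero cuts is already a proof in $\text{G}^\infty_\ell \text{K}^+_s$. For the inductive step I would pick a \emph{topmost} cut $c$ in $\pi$, meaning one whose two immediate premise subproofs, together with every witness nested inside them at any depth, are jointly cut-free. Since the ``lies strictly above'' relation on a finite collection of cuts is a strict partial order, such a maximal element exists. The two premises of $c$ then constitute genuine proofs in the cut-free system $\text{G}^\infty_\ell \text{K}^+_s$, so Theorem~\ref{th:cut-admissibility} yields a cut-free proof of the conclusion of $c$, which I splice back in place of $c$. The resulting proof of $\Gamma \Rightarrow_s \Delta$ has strictly fewer cut instances, and the induction closes.

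The only subtle point, and what I expect to be the main conceptual step, is locating the topmost cut. Because witnesses are not subproofs of their host, the search for a cut with cut-free premises must descend simultaneously through the main global fragment and through every nested witness; finiteness of the total cut count is precisely what makes this descent terminate. This is the mechanism by which cut admissibility gets promoted to cut elimination as soon as the proof carries only finitely many cuts, in line with the prefatory remarks on admissibility versus elimination.
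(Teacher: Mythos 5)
Your proposal is correct and follows essentially the same route as the paper: reduce the local-cuts case to the finitely-many-cuts case via finiteness of the main local fragment, then induct on the number of cuts using Theorem~\ref{th:cut-admissibility}. The paper leaves the induction step implicit; your elaboration of selecting a topmost cut (descending through witnesses as well as the main global fragment) and splicing in the cut-free proof supplied by admissibility is exactly the intended mechanism.
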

\begin{proof}
	In case \(\pi\) has local cuts only then it must be the case that \(\pi\) has finitely many instances of cut, since the main local fragment is always finite.
	Then, in both cases we can assume that \(\pi\) has finitely many instances of
	cut.
	The result is then proven by an induction on the number of cuts using
	Theorem~\ref{th:cut-admissibility}.
\end{proof}

\section{Cut elimination}

We finish the paper by proving the promised result: cut elimination for \(\text{G}^\infty_\ell \text{K}^+_s\).
Note that by the translations defined in Section~\ref{sec:definition} between \(\text{G}^\infty_\ell \text{K}^+\) and \(\text{G}^\infty_\ell \text{K}^+_s\) this cut elimination will prove cut elimination for \(\text{G}^\infty_\ell \text{K}^+\).
In fact, since in reality the difference between \(\text{G}^\infty_\ell \text{K}^+_s\) and \(\text{G}^\infty_\ell \text{K}^+\) is just how we accomodate the information of proofs, it can be argued that this cut elimination is a cut elimination method for \(\text{G}^\infty_\ell \text{K}^+\) and \(\text{G}^\infty_\ell \text{K}^+_s\) just provides a way to define the necessary corecursive functions easier.

\begin{theorem}[Cut Elimination]\label{th:cut-elimination}
	If \(\text{G}^\infty_\ell \text{K}^+_s + \textsf{Cut} \vdash \Gamma \Rightarrow_s \Delta\), then  \(\text{G}^\infty_\ell \text{K}^+_s \vdash \Gamma \Rightarrow_s \Delta\).
\end{theorem}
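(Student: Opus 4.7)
The plan is to proceed by transfinite induction on the ordinal height $\|\pi\|$ of a proof $\pi$ of $\Gamma \Rightarrow_s \Delta$ in $\text{G}^\infty_\ell \text{K}^+_s + \textsf{Cut}$, exploiting the stratification of the system into the levels $\alpha\text{-}\text{G}^\infty_\ell \text{K}^+_s + \textsf{Cut}$. Every witness occurring in $\pi$ has strictly smaller ordinal height than $\|\pi\|$ by the very definition of the levels, so the induction hypothesis applies to witnesses (at any annotation $s'$). The base case $\|\pi\|=0$ forces $\pi$ to contain no witnesses and hence no modal rules; by the local progress condition every branch of such a $\pi$ is then finite, so $\pi$ is wellfounded with only finitely many cuts and Corollary~\ref{cor:cut-admissibility} already delivers a cut-free proof.

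For the inductive step I would carry out exactly the three-stage reduction sketched after Definition~\ref{def:ordinal-system}. First, apply the induction hypothesis to each witness $\tau$ of $\pi$, obtaining a cut-free replacement of the same annotated endsequent, and substitute it back; since a witness is used only through its conclusion, every rule instance remains valid, and the result is a proof $\pi_1$ of $\Gamma \Rightarrow_s \Delta$ in $\text{G}^\infty_\ell \text{K}^+_s + \textsf{mCut}$. Second, apply $\textsf{push}$ from Lemma~\ref{lm:cuts-outside-main-fragment} to $\pi_1$, producing a proof $\pi_2$ of $\Gamma \Rightarrow_s \Delta$ in $\text{G}^\infty_\ell \text{K}^+_s + \textsf{wCut}$. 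The witnesses of $\pi_1$ are cut-free and therefore trivially have local cuts only; since $\textsf{push}$ preserves locality of cuts in witnesses, every witness of $\pi_2$ is a proof in $\text{G}^\infty_\ell \text{K}^+_s + \textsf{Cut}$ whose cuts lie entirely in its main local fragment. Third, invoke Corollary~\ref{cor:cut-admissibility} (case 2) on each such witness of $\pi_2$ to eliminate its cuts, and substitute the cut-free results. Because $\pi_2$ already contains no cuts in its main global fragment, the outcome is a proof in $\text{G}^\infty_\ell \text{K}^+_s$, as required.

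The main obstacle, and what I expect to be the real content of writing this out, is verifying that the three stages compose correctly across the different layers of induction and corecursion in play: the outer transfinite induction on ordinal heights, the corecursion built into $\textsf{push}$, and the finitary cut admissibility applied inside each witness. Each stage is individually trivial or already established, but their composition is what licenses only a single appeal to the induction hypothesis (in stage one) before we exit to the purely finitary regime of Corollary~\ref{cor:cut-admissibility}. The crucial lever is the locality-preservation clause of $\textsf{push}$: it is exactly what guarantees that after stage two each witness falls inside the reach of Corollary~\ref{cor:cut-admissibility}, so that no further transfinite descent is needed to finish. Tracking this clause through the informal layered structure is where I would expect the argument to require the most care.
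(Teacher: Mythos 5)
Your proposal is correct and follows essentially the same three-stage argument as the paper: apply the induction hypothesis on ordinal height to the witnesses to reach \(\textsf{mCut}\), use \(\textsf{push}\) together with its preservation of locality of cuts in witnesses to reach \(\textsf{wCut}\) with only local cuts in each witness, and finish with Corollary~\ref{cor:cut-admissibility}. Your separate treatment of the base case is harmless but unnecessary, since the general argument applies vacuously when there are no witnesses.
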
   
\begin{proof}
	Let \(\pi \vdash \Gamma \Rightarrow_s \Delta\) in \(\text{G}^\infty_\ell \text{K}^+_s + \textsf{Cut}\).
	We proceed by induction on \(\|\pi\|\).
	Note that for any witness \(\tau\) of \(\pi\) we have that \(\|\tau\| < \|\pi\|\).
	So by induction hypothesis we can get a \(\tau'\) proof in \(\text{G}^\infty_\ell \text{K}^+_s\) proving the same sequent as \(\tau\).
	Let \(\pi_1\) be the result of replacing each of its witnesses \(\tau\) by \(\tau'\), we notice that \(\pi_1\) is a proof in \(\text{G}^\infty_\ell \text{K}^+_s + \textsf{mCut}\) proving the same sequent as \(\pi\) (changing witnesses do not alter the conclusion, but we may have altered the height of the proof so from now on the I.H.\ cannot be used anymore).

	Let \(\pi_2 := \textsf{push}(\pi_1)\), by Lemma~\ref{lm:cuts-outside-main-fragment} (using that a proof with main cuts only have witnesses with local cuts only) we have that \(\pi_2\) is proof in \(\text{G}^\infty_\ell \text{K}^+_s + \textsf{wCut}\) proving the same sequent as \(\pi\) and such that all its witnesses have local cuts only.

	Let \(\iota\) be a witness of \(\pi_2\), by Corollary~\ref{cor:cut-admissibility} there is an \(\iota'\) proof in \(\text{G}^\infty_\ell \text{K}^+_s\) proving the same sequent as \(\iota\).
	Let \(\pi_3\) be the result of replacing each witness \(\iota\) of \(\pi_2\) by \(\iota'\). Then \(\pi_3\) proves the same sequent as \(\pi\), since a change of witnesses do not change the conclusion. In addition since \(\pi_2\) have witness-cuts only and \(\pi_3\) has the same main global fragment as \(\pi_2\) but with no cuts in the witnesses we can conclude that \(\pi_3\) is cut free.
\end{proof}

\begin{figure}
	\centering
    \begin{tikzpicture}
      \filldraw[gray!50] (0,0) -- (2,1) -- (-2,1) -- cycle;
      \draw (0,0) -- (2,1) -- (-2,1) -- cycle;

      \filldraw[gray!50] (-1.5,1) -- (-1,3) -- (-2,3) -- cycle;
      \draw (-1.5,1) -- (-1,3) -- (-2,3) -- cycle;
      \filldraw[gray!50] (1.5,1) -- (1,3) -- (2,3) -- cycle;
      \draw (1.5,1) -- (1,3) -- (2,3) -- cycle;
      \node at (0,2) {\(\ldots\)};
      \node at (3,1) {\LARGE\(\rightsquigarrow\)};
    \end{tikzpicture}
    \quad
    \begin{tikzpicture}
      \filldraw[gray!50] (0,0) -- (2,1) -- (-2,1) -- cycle;
      \draw (0,0) -- (2,1) -- (-2,1) -- cycle;

      \draw (-1.5,1) -- (-1,3) -- (-2,3) -- cycle;
      \draw (1.5,1) -- (1,3) -- (2,3) -- cycle;
      \node at (0,2) {\(\ldots\)};
      \node at (3,1) {\LARGE\(\rightsquigarrow\)};
    \end{tikzpicture}
    \vspace{0.5cm}

    \begin{tikzpicture}
      \node at (-3,1) {\LARGE\(\rightsquigarrow\)};
      \draw (0,0) -- (2,1) -- (-2,1) -- cycle;

      \filldraw[gray!50] (-1.5,1) -- (-1.375,1.5) -- (-1.625,1.5) -- cycle;
      \draw (-1.5,1) -- (-1,3) -- (-2,3) -- cycle;
      \filldraw[gray!50] (1.5,1) -- (1.375,1.5) -- (1.625,1.5) -- cycle;
      \draw (1.5,1) -- (1,3) -- (2,3) -- cycle;
      \node at (0,2) {\(\ldots\)};
      \node at (3,1) {\LARGE\(\rightsquigarrow\)};
    \end{tikzpicture}
    \begin{tikzpicture}
      \draw (0,0) -- (2,1) -- (-2,1) -- cycle;

      \draw (-1.5,1) -- (-1,3) -- (-2,3) -- cycle;
      \draw (1.5,1) -- (1,3) -- (2,3) -- cycle;
      \node at (0,2) {\(\ldots\)};
    \end{tikzpicture}
    \caption{A graphical representation of the cut elimination process.}
  	\label{fig:cut-elim}
\end{figure}

  We can see a graphical representation of the process below at Figure~\ref{fig:cut-elim}.
The wide triangle represents the global fragment of the proof, while the thinner triangles represents the witness.
We note that there could be infinitely many witnesses, which is represented by the horizontal dots.
We start with all the triangles being gray, which represents that cut can occur at all levels: in the main fragment and in the main global fragment.
In the first step we apply the induction hypothesis to all witnesses at the same time, meaning that all the cuts left are in the main global fragment.
This is represented in the picture by making the witnesses triangles white.
In the second step we apply the \(\textsf{push}\) function.
Since after the first step the proof had witnesses with local cuts only, after the second step this property is preserved.
This is represented by having the witness triangles with a little bit of gray, but not fully covered in gray.
On the other hand, thanks to the application of \(\textsf{push}\) we know that the main global fragment is cut free.
Then all the cuts which are left are in the witnesses and occur in the main local fragment of them.
In the third step, we simultaneously apply cut admissibility on all the witnesses (since the main local fragment is finite having all the cuts there implies that there are only finitely many cuts) in order to obtain a cut-free proof, as desired.

We would like to finish the paper with a remark.
When defining cut elimination methods for non-wellfounded proofs, the cut-free proof is usually defined via the convergence of its approximations (e.g. \cite{acclavio2024infinitarycuteliminationfiniteapproximations}).
When a method of this kind is applied to a preproof, it will not necessarily converge.
In our case we face a similar problem with preproofs, namely that they lack the notions of local height and ordinal height.
The recursions we use would not be available and what to do would not even be defined.
So for our methodology slicing the proof into adequate parts, which we call local and global fragments, is fundamental.
The slices help us to organize the order in which we push the cuts, requiring some back-and-forth, another thing that contrasts with other cut eliminations methods for non-wellfounded proofs.\footnote{We would like to thank an anonymous reviewer for this observation.}

\section*{Conclusion and future work}

We proved cut elimination for a non-wellfounded calculus of \(\text{K}^+\), using only basic techniques of structural proof theory, such as ordinal recursion and corecursion.
The method is mainly based in splitting the proofs nicely, taming the global branch condition in the process.
This proof also works for similar systems to the master modality, such as common knowledge over \(\mathrm{K}\), in which the \(\nec\) and \(\necm\) rules change to (in the system \(\text{G}^\infty_\ell\text{K}^+\)):
\[
  \AxiomC{}
  \noLine
  \UnaryInfC{\(\Gamma, \Pi, \mathsf{C} \Pi \Rightarrow_{\circ} \phi\)}
  \RightLabel{\(\nec_i\)}
  \UnaryInfC{\(\Sigma, \nec_i \Gamma, \mathsf{C} \Pi \Rightarrow_{s} \nec_i\phi, \Delta\)}
  \DisplayProof
  \qquad
  \AxiomC{}
  \noLine
  \UnaryInfC{\({(\Gamma_i, \Pi, \mathsf{C} \Pi \Rightarrow_{\circ} \phi)}_{i < n}\)}
  \AxiomC{}
  \noLine
  \UnaryInfC{\({(\Gamma_i, \Pi, \mathsf{C} \Pi \Rightarrow_{\phi} \mathsf{C} \phi)}_{i<n}\)}
  \RightLabel{\(C\)}
  \BinaryInfC{\(\Sigma, \nec_0 \Gamma_0, \ldots, \nec_{n-1} \Gamma_{n-1}, \mathsf{C} \Pi
  \Rightarrow_{s} \mathsf{C} \phi, \Delta \)}
  \DisplayProof
\]
where we assume we have \(n\) agents, \(i \in \{0, \ldots, n-1\}\),
\(\nec_i\) is the knowledge modality for agent \(i\), and \(\mathsf{C}\) is the common knowledge modality.

% The idea is taht with this rules when you see the cut reductions there are
% two possibilities:
% - Either the proofs with cut formula []_i on one proof have corresponding
% premises with related to agent i on the other proof, and then the current
% reduction rules can be emulated
% - If this correspondence does not occur, then the cut formula will be in the
% weakening part of the premises without correspondant in the commutative rule
% so it can be straightforward eliminated from these.

In \cite{previous} proofs consists of a base recursive level with a corecursive level on top. In the present article proofs have those two levels and on top of them another recursive level, an extension which is needed to acommodate the master modality.
This suggest a hierarchy of classes of proof systems in which each time the number recursive-corecursive alternation increases, allowing greater expressivity.
Exploring the intricacies of this hierarchy is left as future work.

Another line of work is to explore how our method can be extended to more complex branch conditions
such as the ones used to deal with the modal \(\mu\)-calculus.
In this direction there are two questions which we consider of interest to explore:
\begin{enumerate}
  \item How can we slice non-wellfounded proofs to ease cut eliminition?
  \item Which methods can be use to show cut elimination from  cut admissibility
    for non-wellfounded proofs?
\end{enumerate}

% The ultimate goal would be the whole \(\mu\)-calculus, although (at first sight), that goal seems far away (yet).

\appendix

\section{Weakening}

\begin{definition}
Given \(\pi \vdash \Gamma \Rightarrow_{ s }\Delta\) in \(\text{G}^{ \infty }_{ \ell }\text{K}^{ + }_{ s }\) we can define, by recursion on the local height and cases in the last rule apply, the proof \(\textsf{wk}_{ \Gamma';\Delta' }(\pi)\) proof of \(\Gamma, \Gamma' \Rightarrow_{ s }\Delta,\Delta'\) in \(\text{G}^{ \infty }_{ \ell }\text{K}^{ + }_{ s }\) as:

\[
\AxiomC{}
\RightLabel{Ax}
\UnaryInfC{\(\Gamma, p \Rightarrow_s p, \Delta\)}
\DisplayProof
\mapsto
\AxiomC{}
\RightLabel{Ax}
\UnaryInfC{\(\Gamma, \Gamma', p \Rightarrow_{s} p, \Delta, \Delta'\)}
\DisplayProof
\]

\[
\AxiomC{}
\RightLabel{Ax-\(\bot\)}
\UnaryInfC{\(\Gamma, \bot \Rightarrow_s \Delta\)}
\DisplayProof
\mapsto
\AxiomC{}
\RightLabel{Ax-\(\bot\)}
\UnaryInfC{\(\Gamma, \Gamma'\bot \Rightarrow_{s} \Delta, \Delta'\)}
\DisplayProof
\]

\[
\AxiomC{\(\pi_{ 0 }\)}
\noLine
\UnaryInfC{\(\Gamma \Rightarrow_s \Delta, \phi\)}
\AxiomC{\(\pi_{ 1 }\)}
\noLine
\UnaryInfC{\(\Gamma, \psi \Rightarrow_s \Delta\)}
\RightLabel{\(\to\)L}
\BinaryInfC{\(\Gamma, \phi \to \psi \Rightarrow_s \Delta\)}
\DisplayProof
\mapsto
\AxiomC{\(\textsf{wk}_{ \Gamma';\Delta' }(\pi_{ 0})\)}
\noLine
\UnaryInfC{\(\Gamma, \Gamma' \Rightarrow_{ s} \Delta, \Delta', \phi\)}
\AxiomC{\(\textsf{wk}_{ \Gamma';\Delta' }(\pi_{ 1 })\)}
\noLine
\UnaryInfC{\(\Gamma, \Gamma', \psi \Rightarrow_{s} \Delta, \Delta'\)}
\RightLabel{\(\to\)L}
\BinaryInfC{\(\Gamma, \Gamma', \phi \to \psi \Rightarrow_{s} \Delta, \Delta'\)}
\DisplayProof
\]

\[
\AxiomC{\(\pi_{ 0 }\)}
\noLine
\UnaryInfC{\(\Gamma, \phi \Rightarrow_s \psi, \Delta\)}
\RightLabel{\(\to\)R}
\UnaryInfC{\(\Gamma \Rightarrow_s \phi \to \psi, \Delta\)}
\DisplayProof
\mapsto
\AxiomC{\(\textsf{wk}_{ \Gamma';\Delta' }(\pi_{ 0})\)}
\noLine
\UnaryInfC{\(\Gamma, \Gamma', \phi \Rightarrow_{s} \psi, \Delta, \Delta'\)}
\RightLabel{\(\to\)R}
\UnaryInfC{\(\Gamma, \Gamma' \Rightarrow_{s} \phi \to \psi, \Delta, \Delta'\)}
\DisplayProof
\]

\[
\AxiomC{\(\tau \vdash \Gamma, \dnecm \Pi \Rightarrow_\circ \phi\)}
\RightLabel{\(\nec\)}
\UnaryInfC{\(\Sigma, \nec \Gamma, \necm \Pi \Rightarrow_s \nec \phi, \Delta\)}
\DisplayProof
\mapsto 
\AxiomC{\(\tau \vdash \Gamma, \dnecm \Pi \Rightarrow_\circ \phi\)}
\RightLabel{\(\nec\)}
\UnaryInfC{\(\Sigma, \Gamma', \nec \Gamma, \necm \Pi \Rightarrow_{s} \nec \phi, \Delta, \Delta'\)}
\DisplayProof
\]

If \(\pi\) is:
\[
\AxiomC{\(\tau \vdash \Gamma, \dnecm \Pi \Rightarrow_\circ \phi\)}
\AxiomC{\(\pi_{ 0 }\)}
\noLine
\UnaryInfC{\(\Gamma, \dnecm \Pi \Rightarrow_\phi \necm \phi\)}
\RightLabel{\(\necm_f\)}
\BinaryInfC{\(\Sigma, \nec \Gamma, \necm \Pi \Rightarrow_s \necm \phi, \Delta\)}
\DisplayProof
\]
then it maps to

\[
\AxiomC{\(\tau \vdash^{ \beta } \Gamma, \dnecm \Pi \Rightarrow_\circ \phi\)}
\AxiomC{\(\pi_{ 0 }\)}
\noLine
\UnaryInfC{\(\Gamma, \dnecm \Pi \Rightarrow_\phi \necm \phi\)}
\RightLabel{\(\necm_f\)}
\BinaryInfC{\(\Sigma, \Gamma', \nec \Gamma, \necm \Pi \Rightarrow_{s} \necm \phi, \Delta, \Delta'\)}
\DisplayProof
\]

If \(\pi\) is:

\[
\AxiomC{\(\tau_0 \vdash \Gamma, \dnecm \Pi \Rightarrow_\circ \phi\)}
\AxiomC{\(\tau_1 \vdash \Gamma, \dnecm \Pi \Rightarrow_\phi \necm \phi\)}
\RightLabel{\(\necm_u\)}
\BinaryInfC{\(\Sigma, \nec \Gamma, \necm \Pi \Rightarrow_s \necm \phi, \Delta\)}
\DisplayProof
\]

then it maps to

\[
\AxiomC{\(\tau_0 \vdash \Gamma, \dnecm \Pi \Rightarrow_\circ \phi\)}
\AxiomC{\(\tau_1 \vdash \Gamma, \dnecm \Pi \Rightarrow_\phi \necm \phi\)}
\RightLabel{\(\necm_u\)}
\BinaryInfC{\(\Sigma, \Gamma', \nec \Gamma, \necm \Pi \Rightarrow_{s'} \necm \phi, \Delta, \Delta'\)}
\DisplayProof
\]

\[
\AxiomC{\(\pi_{ 0 }\)}
\noLine
\UnaryInfC{\(\Gamma \Rightarrow_{ s } \Delta, \chi\)}
\AxiomC{\(\pi_{ 1 }\)}
\noLine
\UnaryInfC{\(\chi, \Gamma \Rightarrow_{ s }\Delta\)}
\RightLabel{Cut}
\BinaryInfC{\(\Gamma \Rightarrow_{ s }\Delta\)}
\DisplayProof
\mapsto
\AxiomC{\(\textsf{wk}_{ \Gamma';\Delta' }(\pi_{ 0})\)}
\noLine
\UnaryInfC{\(\Gamma, \Gamma' \Rightarrow_{ s } \Delta, \Delta', \chi\)}
\AxiomC{\(\textsf{wk}_{ \Gamma';\Delta' }(\pi_{ 1})\)}
\noLine
\UnaryInfC{\(\chi, \Gamma, \Gamma' \Rightarrow_{ s }\Delta, \Delta'\)}
\RightLabel{Cut}
\BinaryInfC{\(\Gamma, \Gamma' \Rightarrow_{ s }\Delta, \Delta'\)}
\DisplayProof
\]
\end{definition}

\begin{lemma}
The function \(\textsf{wk}_{ \Gamma';\Delta' }\) is strongly preserving.
\end{lemma}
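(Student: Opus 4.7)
The plan is to proceed by induction on the local height of $\pi$, with a case analysis on the last rule applied. The key observation driving the argument is that $\textsf{wk}_{\Gamma';\Delta'}$ is structurally rigid: on every non-modal rule it recurses on the immediate premises, and on the modal rules $\nec$, $\necm_f$, $\necm_u$ it leaves the witnesses $\tau, \tau_0, \tau_1$ (and the right premise of $\necm_f$, which begins a fresh local fragment) completely untouched, merely absorbing $\Gamma'$ and $\Delta'$ into the weakening part $\Sigma, \Delta$ of the conclusion. This rigidity is the single engine that powers all six preservation properties.

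From it, preservation of ordinal height and of locality of cuts in witnesses are immediate: since no witness of $\pi$ is modified, both the ordinal heights of witnesses and the cut structure inside them are literally the same in $\textsf{wk}_{\Gamma';\Delta'}(\pi)$ as in $\pi$, and the ordinal height of a proof is determined by its witnesses. Preservation of local height follows by induction: each rule instance maps to a rule instance of the same rule with premises of at most the same local height by the I.H., and modal rules map to modal rules so the main local fragment keeps the same shape. Preservation of sizes of cuts, freeness of cuts in the main local fragment, and locality of cuts all follow because cuts in $\textsf{wk}_{\Gamma';\Delta'}(\pi)$ are in position-preserving, cut-formula-preserving bijection with cuts in $\pi$: in the $\textsf{Cut}$ clause we recurse on both premises and re-apply $\textsf{Cut}$ with the same cut formula $\chi$, so no cut is added, removed, moved across a modal rule, or has its formula changed.

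No real obstacle arises; the verification is mechanical once the structural rigidity is observed. The only points worth a moment's attention are the side conditions of the modal rules and the well-formedness of annotated sequents. For $\necm_f$ we need $s = \phi$ and for $\necm_u$ we need $s \neq \phi$, but since the weakening alters only the multisets $\Sigma, \Delta$ and never the annotation $s$ nor the principal formula, these side conditions survive automatically. Likewise the standing constraint that a focused sequent $\Gamma \Rightarrow_\phi \Delta$ must contain $\necm \phi$ in $\Delta$ is preserved since $\Delta$ only grows to $\Delta, \Delta'$. With these points dispatched, each of the six clauses in the definition of strongly preserving can be checked in a single line per rule.
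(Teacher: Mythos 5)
Your proof is correct and follows exactly the route the paper intends: the paper states this lemma without an explicit proof, asserting only that all properties of the auxiliary functions "can be shown straightforwardly by induction on the local height," and your induction—driven by the observation that \(\textsf{wk}_{\Gamma';\Delta'}\) leaves witnesses untouched, maps each rule to the same rule, and puts cuts in a position- and formula-preserving bijection—is precisely that argument carried out. The additional checks of the modal side conditions and the well-formedness of focused sequents are correct and welcome.
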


\section{Contraction}
\subsection{Contraction of propositional variable in the left}

\begin{definition}
Given \(\pi \vdash \Gamma,p,p \Rightarrow_{ s }\Delta\) in \(\text{G}^{ \infty }_{ \ell }\text{K}^{ + }_{ s }\) we can define, by recursion on the local height and cases in the last rule apply, the proof \(\textsf{lctr}_{ p }(\pi)\) proof of \(\Gamma, p \Rightarrow_{ s }\Delta\) in \(\text{G}^{ \infty }_{ \ell }\text{K}^{ + }_{ s }\) as:

\[
\AxiomC{}
\RightLabel{Ax}
\UnaryInfC{\(\Gamma, p, p \Rightarrow_s p, \Delta\)}
\DisplayProof
\mapsto
\AxiomC{}
\RightLabel{Ax}
\UnaryInfC{\(\Gamma, p \Rightarrow_{s} p, \Delta \)}
\DisplayProof
\]

\[
\AxiomC{}
\RightLabel{Ax}
\UnaryInfC{\(\Gamma, p, p, q  \Rightarrow_s q, \Delta\)}
\DisplayProof
\mapsto
\AxiomC{}
\RightLabel{Ax}
\UnaryInfC{\(\Gamma, p, q \Rightarrow_{s} q, \Delta \)}
\DisplayProof
\]
for \(q \neq p\).

\[
\AxiomC{}
\RightLabel{Ax-\(\bot\)}
\UnaryInfC{\(\Gamma,p,p,\bot \Rightarrow_s \Delta\)}
\DisplayProof
\mapsto
\AxiomC{}
\RightLabel{Ax-\(\bot\)}
\UnaryInfC{\(\Gamma,p,\bot \Rightarrow_{s} \Delta\)}
\DisplayProof
\]

\[
\AxiomC{\(\pi_{ 0 }\)}
\noLine
\UnaryInfC{\(\Gamma,p,p \Rightarrow_s \Delta, \phi\)}
\AxiomC{\(\pi_{ 1 }\)}
\noLine
\UnaryInfC{\(\Gamma,p,p, \psi \Rightarrow_s \Delta\)}
\RightLabel{\(\to\)L}
\BinaryInfC{\(\Gamma,p,p, \phi \to \psi \Rightarrow_s \Delta\)}
\DisplayProof
\mapsto
\AxiomC{\(\textsf{lctr}_{ p }(\pi_{ 0})\)}
\noLine
\UnaryInfC{\(\Gamma, p \Rightarrow_{ s} \Delta, \phi\)}
\AxiomC{\(\textsf{lctr}_{ p }(\pi_{ 1 })\)}
\noLine
\UnaryInfC{\(\Gamma, p, \psi \Rightarrow_{s} \Delta\)}
\RightLabel{\(\to\)L}
\BinaryInfC{\(\Gamma, p, \phi \to \psi \Rightarrow_{s} \Delta\)}
\DisplayProof
\]

\[
\AxiomC{\(\pi_{ 0 }\)}
\noLine
\UnaryInfC{\(\Gamma, p, p, \phi \Rightarrow_s \psi, \Delta\)}
\RightLabel{\(\to\)R}
\UnaryInfC{\(\Gamma, p, p \Rightarrow_s \phi \to \psi, \Delta\)}
\DisplayProof
\mapsto
\AxiomC{\(\textsf{lctr}_{ p }(\pi_{ 0})\)}
\noLine
\UnaryInfC{\(\Gamma, p, \phi \Rightarrow_{s} \psi, \Delta\)}
\RightLabel{\(\to\)R}
\UnaryInfC{\(\Gamma, p \Rightarrow_{s} \phi \to \psi, \Delta\)}
\DisplayProof
\]

\[
\AxiomC{\(\tau \vdash \Gamma, \dnecm \Pi \Rightarrow_\circ \phi\)}
\RightLabel{\(\nec\)}
\UnaryInfC{\(\Sigma,p,p, \nec \Gamma, \necm \Pi \Rightarrow_s \nec \phi, \Delta\)}
\DisplayProof
\mapsto 
\AxiomC{\(\tau \vdash \Gamma, \dnecm \Pi \Rightarrow_\circ \phi\)}
\RightLabel{\(\nec\)}
\UnaryInfC{\(\Sigma, p, \nec \Gamma, \necm \Pi \Rightarrow_{s} \nec \phi, \Delta\)}
\DisplayProof
\]

If \(\pi\) is:
\[
\AxiomC{\(\tau \vdash \Gamma, \dnecm \Pi \Rightarrow_\circ \phi\)}
\AxiomC{\(\pi_{ 0 }\)}
\noLine
\UnaryInfC{\(\Gamma, \dnecm \Pi \Rightarrow_\phi \necm \phi\)}
\RightLabel{\(\necm_f\)}
\BinaryInfC{\(\Sigma,p,p,\nec \Gamma, \necm \Pi \Rightarrow_s \necm \phi, \Delta\)}
\DisplayProof
\]
then it maps to

\[
\AxiomC{\(\tau \vdash \Gamma, \dnecm \Pi \Rightarrow_\circ \phi\)}
\AxiomC{\(\pi_{ 0 }\)}
\noLine
\UnaryInfC{\(\Gamma, \dnecm \Pi \Rightarrow_\phi \necm \phi\)}
\RightLabel{\(\necm_f\)}
\BinaryInfC{\(\Sigma,p, \nec \Gamma, \necm \Pi \Rightarrow_{s} \necm \phi, \Delta\)}
\DisplayProof
\]

If \(\pi\) is:

\[
\AxiomC{\(\tau_0 \vdash \Gamma, \dnecm \Pi \Rightarrow_\circ \phi\)}
\AxiomC{\(\tau_1 \vdash \Gamma, \dnecm \Pi \Rightarrow_\phi \necm \phi\)}
\RightLabel{\(\necm_u\)}
\BinaryInfC{\(\Sigma,p,p, \nec \Gamma, \necm \Pi \Rightarrow_s \necm \phi, \Delta\)}
\DisplayProof
\]

then it maps to

\[
\AxiomC{\(\tau_0 \vdash \Gamma, \dnecm \Pi \Rightarrow_\circ \phi\)}
\AxiomC{\(\tau_1 \vdash \Gamma, \dnecm \Pi \Rightarrow_\phi \necm \phi\)}
\RightLabel{\(\necm_u\)}
\BinaryInfC{\(\Sigma, p, \nec \Gamma, \necm \Pi \Rightarrow_{s} \necm \phi, \Delta\)}
\DisplayProof
\]

\[
\AxiomC{\(\pi_{ 0 }\)}
\noLine
\UnaryInfC{\(\Gamma,p,p \Rightarrow_{ s } \Delta, \chi\)}
\AxiomC{\(\pi_{ 1 }\)}
\noLine
\UnaryInfC{\(\chi, \Gamma,p,p \Rightarrow_{ s }\Delta\)}
\RightLabel{Cut}
\BinaryInfC{\(\Gamma,p,p \Rightarrow_{ s }\Delta\)}
\DisplayProof
\mapsto
\AxiomC{\(\textsf{lctr}_{ p }(\pi_{ 0})\)}
\noLine
\UnaryInfC{\(\Gamma, p \Rightarrow_{ s } \Delta, \chi\)}
\AxiomC{\(\textsf{lctr}_{ p }(\pi_{ 1})\)}
\noLine
\UnaryInfC{\(\chi, \Gamma, p \Rightarrow_{ s }\Delta\)}
\RightLabel{Cut}
\BinaryInfC{\(\Gamma, p \Rightarrow_{ s }\Delta \)}
\DisplayProof
\]
\end{definition}

\begin{lemma}
The function \(\textsf{lctr}_{ p }\) is strongly preserving.
\end{lemma}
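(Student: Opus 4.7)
The plan is to prove all six preservation properties of \(\textsf{lctr}_p\) simultaneously by induction on the local height of the input proof \(\pi\). The key structural observation underlying the proof is that \(\textsf{lctr}_p\) preserves the tree shape: each rule instance in \(\pi\) corresponds to exactly one rule instance of the same kind at the same position in \(\textsf{lctr}_p(\pi)\), and all witnesses are passed through completely unchanged.

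First, I would dispatch the base cases. For the axiomatic rules \textsf{Ax} and \textsf{Ax}-\(\bot\), the output is again an axiom at the same position, so every property holds trivially. Next, for the non-modal inductive cases \(\to\textsf{L}\), \(\to\textsf{R}\) and \textsf{Cut}, I apply the induction hypothesis to each premise and rebuild the same rule; since the transformation uses only \(\textsf{lctr}_p\) itself on the premises (no auxiliary functions, no structural surgery, no added or removed rule instances), preservation of local height, sizes of cuts, freeness of cuts in the main local fragment, locality of cuts, and locality of cuts in witnesses all follow directly from the induction hypothesis. Ordinal height preservation is also immediate, because the ordinal height of a proof is determined by its witnesses, and witnesses are never touched.

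For the modal cases \(\nec\), \(\necm_f\) and \(\necm_u\), the contracted variable \(p\) lives entirely in the weakening multiset \(\Sigma\) of the conclusion, so no recursion is needed at all: the transformation simply rewrites the rule instance with one occurrence of \(p\) removed from \(\Sigma\), keeping the same witnesses \(\tau\) (resp.\ \(\tau_0, \tau_1\)) and, in the \(\necm_f\) case, the same right subproof \(\pi_0\) verbatim. Since nothing inside the witnesses or the surviving subproof changes, all six properties are preserved on the nose in these cases.

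There is essentially no hard step in this lemma; the only point that merits a sentence is noting that ordinal height preservation in the modal cases is immediate precisely because witnesses are copied, not re-processed, so their ordinal heights are unchanged. Likewise, preservation of freeness of cuts in the main local fragment and of locality of cuts follows from the fact that \(\textsf{lctr}_p\) never introduces a new cut instance: every cut in \(\textsf{lctr}_p(\pi)\) is the image of a cut in \(\pi\) at the same position in the tree.
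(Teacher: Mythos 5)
Your proof is correct and matches the paper's intended argument exactly: the paper states this lemma without an explicit proof, remarking only that all properties of the auxiliary functions follow by straightforward induction on the local height, which is precisely the induction you carry out. Your observations that witnesses are copied verbatim (so ordinal height is untouched), that the contracted variable must sit in the weakening part $\Sigma$ in the modal cases, and that no new cut instances are ever introduced are the right points to make explicit.
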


\subsection{Contraction of propositional variable in the right}

\begin{definition}
Given \(\pi \vdash \Gamma \Rightarrow_{ s }\Delta,p,p\) in \(\text{G}^{ \infty }_{ \ell }\text{K}^{ + }_{ s }\) we can define, by recursion on the local height and cases in the last rule apply, the proof \(\textsf{lctr}_{ p }(\pi)\) proof of \(\Gamma \Rightarrow_{ s } \Delta,p\) in \(\text{G}^{ \infty }_{ \ell }\text{K}^{ + }_{ s }\) as:

\[
\AxiomC{}
\RightLabel{Ax}
\UnaryInfC{\(\Gamma, p \Rightarrow_s p,p, \Delta\)}
\DisplayProof
\mapsto
\AxiomC{}
\RightLabel{Ax}
\UnaryInfC{\(\Gamma, p \Rightarrow_{s} p, \Delta \)}
\DisplayProof
\]

\[
\AxiomC{}
\RightLabel{Ax}
\UnaryInfC{\(\Gamma, q  \Rightarrow_s q, p,p, \Delta\)}
\DisplayProof
\mapsto
\AxiomC{}
\RightLabel{Ax}
\UnaryInfC{\(\Gamma, q \Rightarrow_{s} q, p, \Delta \)}
\DisplayProof
\]
for \(q \neq p\).

\[
\AxiomC{}
\RightLabel{Ax-\(\bot\)}
\UnaryInfC{\(\Gamma,\bot \Rightarrow_s p,p,\Delta\)}
\DisplayProof
\mapsto
\AxiomC{}
\RightLabel{Ax-\(\bot\)}
\UnaryInfC{\(\Gamma,\bot \Rightarrow_{s} p,\Delta\)}
\DisplayProof
\]

\[
\AxiomC{\(\pi_{ 0 }\)}
\noLine
\UnaryInfC{\(\Gamma \Rightarrow_s \Delta,p,p, \phi\)}
\AxiomC{\(\pi_{ 1 }\)}
\noLine
\UnaryInfC{\(\Gamma,\psi \Rightarrow_s \Delta,p,p\)}
\RightLabel{\(\to\)L}
\BinaryInfC{\(\Gamma, \phi \to \psi \Rightarrow_s p,p, \Delta\)}
\DisplayProof
\mapsto
\AxiomC{\(\textsf{rctr}_{ p }(\pi_{ 0})\)}
\noLine
\UnaryInfC{\(\Gamma \Rightarrow_{ s} \Delta,p, \phi\)}
\AxiomC{\(\textsf{rctr}_{ p }(\pi_{ 1 })\)}
\noLine
\UnaryInfC{\(\Gamma, \psi \Rightarrow_{s} \Delta,p\)}
\RightLabel{\(\to\)L}
\BinaryInfC{\(\Gamma, \phi \to \psi \Rightarrow_{s} \Delta,p\)}
\DisplayProof
\]

\[
\AxiomC{\(\pi_{ 0 }\)}
\noLine
\UnaryInfC{\(\Gamma, \phi \Rightarrow_s \psi, \Delta,p,p\)}
\RightLabel{\(\to\)R}
\UnaryInfC{\(\Gamma, \Rightarrow_s \phi \to \psi, \Delta,p,p\)}
\DisplayProof
\mapsto
\AxiomC{\(\textsf{rctr}_{ p }(\pi_{ 0})\)}
\noLine
\UnaryInfC{\(\Gamma, \phi \Rightarrow_{s} \psi, \Delta,p\)}
\RightLabel{\(\to\)R}
\UnaryInfC{\(\Gamma \Rightarrow_{s} \phi \to \psi, \Delta,p\)}
\DisplayProof
\]

\[
\AxiomC{\(\tau \vdash \Gamma, \dnecm \Pi \Rightarrow_\circ \phi\)}
\RightLabel{\(\nec\)}
\UnaryInfC{\(\Sigma, \nec \Gamma, \necm \Pi \Rightarrow_s \nec \phi, \Delta,p,p\)}
\DisplayProof
\mapsto 
\AxiomC{\(\tau \vdash \Gamma, \dnecm \Pi \Rightarrow_\circ \phi\)}
\RightLabel{\(\nec\)}
\UnaryInfC{\(\Sigma, \nec \Gamma, \necm \Pi \Rightarrow_{s} \nec \phi, \Delta,p\)}
\DisplayProof
\]

If \(\pi\) is:
\[
\AxiomC{\(\tau \vdash \Gamma, \dnecm \Pi \Rightarrow_\circ \phi\)}
\AxiomC{\(\pi_{ 0 }\)}
\noLine
\UnaryInfC{\(\Gamma, \dnecm \Pi \Rightarrow_\phi \necm \phi\)}
\RightLabel{\(\necm_f\)}
\BinaryInfC{\(\Sigma,\nec \Gamma, \necm \Pi \Rightarrow_s \necm \phi, \Delta,p,p\)}
\DisplayProof
\]
then it maps to

\[
\AxiomC{\(\tau \vdash \Gamma, \dnecm \Pi \Rightarrow_\circ \phi\)}
\AxiomC{\(\pi_{ 0 }\)}
\noLine
\UnaryInfC{\(\Gamma, \dnecm \Pi \Rightarrow_\phi \necm \phi\)}
\RightLabel{\(\necm_f\)}
\BinaryInfC{\(\Sigma,\nec \Gamma, \necm \Pi \Rightarrow_{s} \necm \phi, \Delta,p\)}
\DisplayProof
\]

If \(\pi\) is:

\[
\AxiomC{\(\tau_0 \vdash \Gamma, \dnecm \Pi \Rightarrow_\circ \phi\)}
\AxiomC{\(\tau_1 \vdash \Gamma, \dnecm \Pi \Rightarrow_\phi \necm \phi\)}
\RightLabel{\(\necm_u\)}
\BinaryInfC{\(\Sigma, \nec \Gamma, \necm \Pi \Rightarrow_s \necm \phi, \Delta,p,p\)}
\DisplayProof
\]

then it maps to

\[
\AxiomC{\(\tau_0 \vdash \Gamma, \dnecm \Pi \Rightarrow_\circ \phi\)}
\AxiomC{\(\tau_1 \vdash \Gamma, \dnecm \Pi \Rightarrow_\phi \necm \phi\)}
\RightLabel{\(\necm_u\)}
\BinaryInfC{\(\Sigma, \nec \Gamma, \necm \Pi \Rightarrow_{s} \necm \phi, \Delta,p\)}
\DisplayProof
\]

\[
\AxiomC{\(\pi_{ 0 }\)}
\noLine
\UnaryInfC{\(\Gamma \Rightarrow_{ s } \Delta,p,p, \chi\)}
\AxiomC{\(\pi_{ 1 }\)}
\noLine
\UnaryInfC{\(\chi, \Gamma \Rightarrow_{ s }\Delta,p,p\)}
\RightLabel{Cut}
\BinaryInfC{\(\Gamma \Rightarrow_{ s }\Delta,p,p\)}
\DisplayProof
\mapsto
\AxiomC{\(\textsf{rctr}_{ p }(\pi_{ 0})\)}
\noLine
\UnaryInfC{\(\Gamma \Rightarrow_{ s } \Delta,p, \chi\)}
\AxiomC{\(\textsf{rctr}_{ p }(\pi_{ 1})\)}
\noLine
\UnaryInfC{\(\chi, \Gamma \Rightarrow_{ s }\Delta,p\)}
\RightLabel{Cut}
\BinaryInfC{\(\Gamma \Rightarrow_{ s }\Delta,p \)}
\DisplayProof
\]
\end{definition}

\begin{lemma}
The function \(\textsf{rctr}_{ p }\) is strongly preserving.
\end{lemma}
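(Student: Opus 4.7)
The plan is a straightforward induction on the local height of \(\pi\), mirroring the case analysis used to define \(\textsf{rctr}_p\). At each step I verify each of the six preservation properties by inspecting how the defining clause acts on the syntactic shape of the proof. Because the recursion matches every rule to a rule application of the same kind (axioms go to axioms, \(\to\)L to \(\to\)L, modal rules to the same modal rule, cut to cut), almost every property is forced by the shape of the clauses.

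For \textbf{local height} and \textbf{freeness of cuts in the main local fragment}, the key observation is that the clauses introduce no new modal rules and no new cuts: a modal rule in \(\pi\) is replaced by the same modal rule applied to the exact same witnesses (so the main local fragment of \(\textsf{rctr}_p(\pi)\) lies at exactly the same position), and cut is replaced by cut, never introduced ex nihilo. For \textbf{ordinal height}, the witnesses \(\tau\), \(\tau_0\), \(\tau_1\) are passed through unmodified by every modal clause, so the ordinals \(\beta, \gamma < \alpha\) certifying height remain valid. For \textbf{size of cuts}, the cut clause produces a new cut with the \emph{same} cut formula \(\chi\), and by the inductive hypothesis applied to \(\pi_0,\pi_1\) any cuts inside those subproofs retain their size bound. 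For \textbf{locality of cuts} and \textbf{locality of cuts in witnesses}, again no cuts are created outside the positions where they already existed, and witnesses are untouched, so both properties transfer immediately from the hypothesis on \(\pi\) to \(\textsf{rctr}_p(\pi)\).

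I do not foresee any genuine obstacle: the lemma is entirely parallel to the earlier (identical) lemma for \(\textsf{lctr}_p\), and the proof is the same bookkeeping. The only mild subtlety is that the axiom cases must split on whether the contracted variable \(p\) coincides with the propositional variable in the axiom; in both subcases the output is again an axiom, so all six properties hold vacuously. The induction therefore closes uniformly.
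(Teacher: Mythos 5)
Your proof is correct and matches the paper's intended argument exactly: the paper states this lemma without an explicit proof, remarking only that all properties of the auxiliary functions "can be shown straightforwardly by induction on the local height," which is precisely the rule-by-rule bookkeeping you carry out. Your observations — that each clause maps one rule to one rule of the same kind, leaves witnesses untouched, introduces no new cuts or modal rules, and that the axiom case splits on whether the contracted variable coincides with the axiom's variable — are the right checkpoints for all six preservation properties.
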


\subsection{Contraction of boxed formula in the right}

\begin{definition}
Given \(\pi \vdash \Gamma \Rightarrow_{ s }\Delta,\necm \chi, \necm\chi\) in \(\text{G}^{ \infty }_{ \ell }\text{K}^{ + }_{ s }\) we can define, by recursion on the local height and cases in the last rule apply, the proof \(\textsf{rctr}_{ p }(\pi)\) proof of \(\Gamma \Rightarrow_{ s } \Delta,\necm \chi\) in \(\text{G}^{ \infty }_{ \ell }\text{K}^{ + }_{ s }\) as:

\[
\AxiomC{}
\RightLabel{Ax}
\UnaryInfC{\(\Gamma, p  \Rightarrow_s p, \necm \chi, \necm \chi, \Delta\)}
\DisplayProof
\mapsto
\AxiomC{}
\RightLabel{Ax}
\UnaryInfC{\(\Gamma, p \Rightarrow_{s} p, \necm \chi, \Delta \)}
\DisplayProof
\]

\[
\AxiomC{}
\RightLabel{Ax-\(\bot\)}
\UnaryInfC{\(\Gamma,\bot \Rightarrow_s \necm \chi, \necm \chi,\Delta\)}
\DisplayProof
\mapsto
\AxiomC{}
\RightLabel{Ax-\(\bot\)}
\UnaryInfC{\(\Gamma,\bot \Rightarrow_{s} \necm \chi,\Delta\)}
\DisplayProof
\]

If \(\pi\) is
\[
\AxiomC{\(\pi_{ 0 }\)}
\noLine
\UnaryInfC{\(\Gamma \Rightarrow_s \Delta, \necm \chi,\necm \chi, \phi\)}
\AxiomC{\(\pi_{ 1 }\)}
\noLine
\UnaryInfC{\(\Gamma,\psi \Rightarrow_s \Delta,\necm \chi,\necm \chi\)}
\RightLabel{\(\to\)L}
\BinaryInfC{\(\Gamma, \phi \to \psi \Rightarrow_s \necm \chi,\necm \chi, \Delta\)}
\DisplayProof
\]
then it maps to
\[
\AxiomC{\(\textsf{rctr}_{ \necm \chi }(\pi_{ 0})\)}
\noLine
\UnaryInfC{\(\Gamma \Rightarrow_{ s} \Delta,\necm \chi, \phi\)}
\AxiomC{\(\textsf{rctr}_{ \necm \chi }(\pi_{ 1 })\)}
\noLine
\UnaryInfC{\(\Gamma, \psi \Rightarrow_{s} \Delta,\necm \chi\)}
\RightLabel{\(\to\)L}
\BinaryInfC{\(\Gamma, \phi \to \psi \Rightarrow_{s} \Delta,\necm \chi\)}
\DisplayProof
\]

If \(\pi\) is
\[
\AxiomC{\(\pi_{ 0 }\)}
\noLine
\UnaryInfC{\(\Gamma, \phi \Rightarrow_s \psi, \Delta,\necm \chi,\necm \chi\)}
\RightLabel{\(\to\)R}
\UnaryInfC{\(\Gamma, \Rightarrow_s \phi \to \psi, \Delta,\necm \chi,\necm \chi\)}
\DisplayProof
\mapsto
\AxiomC{\(\textsf{rctr}_{ \necm \chi }(\pi_{ 0})\)}
\noLine
\UnaryInfC{\(\Gamma, \phi \Rightarrow_{s} \psi, \Delta,\necm \chi\)}
\RightLabel{\(\to\)R}
\UnaryInfC{\(\Gamma \Rightarrow_{s} \phi \to \psi, \Delta,\necm \chi\)}
\DisplayProof
\]
then it maps to
\[
\AxiomC{\(\tau \vdash \Gamma, \dnecm \Pi \Rightarrow_\circ \phi\)}
\RightLabel{\(\nec\)}
\UnaryInfC{\(\Sigma, \nec \Gamma, \necm \Pi \Rightarrow_s \nec \phi, \Delta,\necm \chi,\necm \chi\)}
\DisplayProof
\mapsto 
\AxiomC{\(\tau \vdash \Gamma, \dnecm \Pi \Rightarrow_\circ \phi\)}
\RightLabel{\(\nec\)}
\UnaryInfC{\(\Sigma, \nec \Gamma, \necm \Pi \Rightarrow_{s} \nec \phi, \Delta,\necm \chi\)}
\DisplayProof
\]

If \(\pi\) is:
\[
\AxiomC{\(\tau \vdash \Gamma, \dnecm \Pi \Rightarrow_\circ \phi\)}
\AxiomC{\(\pi_{ 0 }\)}
\noLine
\UnaryInfC{\(\Gamma, \dnecm \Pi \Rightarrow_\phi \necm \phi\)}
\RightLabel{\(\necm_f\)}
\BinaryInfC{\(\Sigma,\nec \Gamma, \necm \Pi \Rightarrow_s \necm \phi, \Delta,\necm \chi,\necm \chi\)}
\DisplayProof
\]
then it maps to

\[
\AxiomC{\(\tau \vdash \Gamma, \dnecm \Pi \Rightarrow_\circ \phi\)}
\AxiomC{\(\pi_{ 0 }\)}
\noLine
\UnaryInfC{\(\Gamma, \dnecm \Pi \Rightarrow_\phi \necm \phi\)}
\RightLabel{\(\necm_f\)}
\BinaryInfC{\(\Sigma,\nec \Gamma, \necm \Pi \Rightarrow_{s} \necm \phi, \Delta,\necm \chi\)}
\DisplayProof
\]

If \(\pi\) is:
\[
\AxiomC{\(\tau \vdash \Gamma, \dnecm \Pi \Rightarrow_\circ \chi\)}
\AxiomC{\(\pi_{ 0 }\)}
\noLine
\UnaryInfC{\(\Gamma, \dnecm \Pi \Rightarrow_\chi \necm \chi\)}
\RightLabel{\(\necm_f\)}
\BinaryInfC{\(\Sigma,\nec \Gamma, \necm \Pi \Rightarrow_s \necm \chi, \Delta,\necm \chi\)}
\DisplayProof
\]
then it maps to

\[
\AxiomC{\(\tau \vdash \Gamma, \dnecm \Pi \Rightarrow_\circ \chi\)}
\AxiomC{\(\pi_{ 0 }\)}
\noLine
\UnaryInfC{\(\Gamma, \dnecm \Pi \Rightarrow_\phi \necm \chi\)}
\RightLabel{\(\necm_f\)}
\BinaryInfC{\(\Sigma,\nec \Gamma, \necm \Pi \Rightarrow_{s} \necm \chi, \Delta\)}
\DisplayProof
\]

If \(\pi\) is:

\[
\AxiomC{\(\tau_0 \vdash \Gamma, \dnecm \Pi \Rightarrow_\circ \phi\)}
\AxiomC{\(\tau_1 \vdash \Gamma, \dnecm \Pi \Rightarrow_\phi \necm \phi\)}
\RightLabel{\(\necm_u\)}
\BinaryInfC{\(\Sigma, \nec \Gamma, \necm \Pi \Rightarrow_s \necm \phi, \Delta,\necm \chi,\necm \chi\)}
\DisplayProof
\]

then it maps to

\[
\AxiomC{\(\tau_0 \vdash \Gamma, \dnecm \Pi \Rightarrow_\circ \phi\)}
\AxiomC{\(\tau_1 \vdash \Gamma, \dnecm \Pi \Rightarrow_\phi \necm \phi\)}
\RightLabel{\(\necm_u\)}
\BinaryInfC{\(\Sigma, \nec \Gamma, \necm \Pi \Rightarrow_{s} \necm \phi, \Delta,\necm \chi\)}
\DisplayProof
\]

If \(\pi\) is:

\[
\AxiomC{\(\tau_0 \vdash \Gamma, \dnecm \Pi \Rightarrow_\circ \chi\)}
\AxiomC{\(\tau_1 \vdash \Gamma, \dnecm \Pi \Rightarrow_\chi \necm \chi\)}
\RightLabel{\(\necm_u\)}
\BinaryInfC{\(\Sigma, \nec \Gamma, \necm \Pi \Rightarrow_s \necm \chi, \Delta,\necm \chi\)}
\DisplayProof
\]

then it maps to

\[
\AxiomC{\(\tau_0 \vdash \Gamma, \dnecm \Pi \Rightarrow_\circ \chi\)}
\AxiomC{\(\tau_1 \vdash \Gamma, \dnecm \Pi \Rightarrow_\chi \necm \chi\)}
\RightLabel{\(\necm_u\)}
\BinaryInfC{\(\Sigma, \nec \Gamma, \necm \Pi \Rightarrow_{s} \necm \chi, \Delta\)}
\DisplayProof
\]

\[
\AxiomC{\(\pi_{ 0 }\)}
\noLine
\UnaryInfC{\(\Gamma \Rightarrow_{ s } \Delta,\necm \chi,\necm \chi, \phi\)}
\AxiomC{\(\pi_{ 1 }\)}
\noLine
\UnaryInfC{\(\phi, \Gamma \Rightarrow_{ s }\Delta,\necm \chi,\necm \chi\)}
\RightLabel{Cut}
\BinaryInfC{\(\Gamma \Rightarrow_{ s }\Delta,\necm \chi,\necm \chi\)}
\DisplayProof
\]
maps to
\[
\AxiomC{\(\textsf{rctr}_{ \necm \chi }(\pi_{ 0})\)}
\noLine
\UnaryInfC{\(\Gamma \Rightarrow_{ s } \Delta,\necm \chi, \phi\)}
\AxiomC{\(\textsf{rctr}_{ \necm \chi }(\pi_{ 1})\)}
\noLine
\UnaryInfC{\(\phi, \Gamma \Rightarrow_{ s }\Delta,\necm \chi\)}
\RightLabel{Cut}
\BinaryInfC{\(\Gamma \Rightarrow_{ s }\Delta,\necm \chi \)}
\DisplayProof
\]
\end{definition}

\begin{lemma}
The function \(\textsf{rctr}_{ \necm \chi }\) is strongly preserving.
\end{lemma}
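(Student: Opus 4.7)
The plan is to prove the lemma by induction on the local height of $\pi$, checking each of the six preservation properties case-by-case through the clauses of the definition of $\textsf{rctr}_{\necm\chi}$. The key observation guiding the plan is that $\textsf{rctr}_{\necm\chi}$ never touches any witness and never introduces a rule application that was not already present in $\pi$: it only removes one copy of $\necm\chi$ from the succedent, either by trimming the weakening part of a modal rule instance or by absorbing one redundant copy into the principal-formula slot of an $\necm_f$ or $\necm_u$ instance.

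First I would dispose of the axiomatic cases, where the output is again an axiom and all six properties hold trivially. Next I would handle the propositional cases $\to\text{L}$ and $\to\text{R}$, where $\textsf{rctr}_{\necm\chi}$ commutes with the rule: the premise proofs are contracted recursively and reassembled with the same rule. Each property follows from the induction hypothesis because the local height of the output is bounded by that of $\pi$, the witnesses are unchanged, no new cuts are introduced, and the main local fragment retains its shape.

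For the modal cases I would split on whether the principal formula of the last rule is $\necm\chi$ itself. When it is not (so both copies of $\necm\chi$ lie in the weakening part $\Delta$), the witness $\tau$ (respectively $\tau_0$ and $\tau_1$ for $\necm_u$) is carried over verbatim and the same modal rule is re-applied with one fewer copy of $\necm\chi$ in the succedent; all six preservation properties are then immediate, including preservation of ordinal height since witness heights do not change. When the principal formula is $\necm\chi$ (i.e.\ $\phi = \chi$), the redundant copy is identified with the principal occurrence; again the witnesses are untouched, no cuts are added, and the shape of the main local fragment is preserved. Finally, the cut case recursively contracts both premises and reforms the cut with the same cut formula, so every property transfers through the induction hypothesis.

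I do not expect a genuinely hard step: the lemma is structural bookkeeping, and strong preservation is tailor-made to survive exactly this sort of local, non-invasive transformation. The subtlest point to spell out is the interaction between preservation of locality of cuts in the main global fragment and preservation of locality of cuts in witnesses, because these are two separate clauses in the definition of strong preservation. However, since $\textsf{rctr}_{\necm\chi}$ leaves witnesses entirely alone (so the second clause is automatic) and never inserts new cuts outside those already occurring in the main local fragment of $\pi$ (so the first clause is automatic from the induction hypothesis), the two clauses do not interfere with each other and the verification remains routine.
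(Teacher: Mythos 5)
Your proof is correct and follows exactly the route the paper intends: the paper gives no explicit proof for this lemma but states (Section 3) that all properties of the auxiliary functions are shown by a straightforward induction on the local height, which is precisely your case analysis. Your key observations — that $\textsf{rctr}_{\necm\chi}$ leaves every witness untouched, maps each rule instance to one instance of the same (kind of) rule, and introduces no new cuts — are exactly what makes all six preservation clauses, including ordinal height, go through.
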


\section{Inversion}
\subsection{Inversion of \(\bot\)}

\begin{definition}
Given \(\pi \vdash \Gamma\Rightarrow_{ s }\Delta,\bot\) in \(\text{G}^{ \infty }_{ \ell }\text{K}^{ + }_{ s }\) we can define, by recursion on the local height and cases in the last rule apply, the proof \(\textsf{inv}_{ \bot }(\pi)\) proof of \(\Gamma \Rightarrow_{ s }\Delta\) in \(\text{G}^{ \infty }_{ \ell }\text{K}^{ + }_{ s }\) as:

\[
\AxiomC{}
\RightLabel{Ax}
\UnaryInfC{\(\Gamma, p \Rightarrow_s p, \Delta,\bot\)}
\DisplayProof
\mapsto
\AxiomC{}
\RightLabel{Ax}
\UnaryInfC{\(\Gamma, p \Rightarrow_{s} p, \Delta \)}
\DisplayProof
\]

\[
\AxiomC{}
\RightLabel{Ax-\(\bot\)}
\UnaryInfC{\(\Gamma,\bot \Rightarrow_s \Delta, \bot\)}
\DisplayProof
\mapsto
\AxiomC{}
\RightLabel{Ax-\(\bot\)}
\UnaryInfC{\(\Gamma,p,\bot \Rightarrow_{s} \Delta\)}
\DisplayProof
\]

\[
\AxiomC{\(\pi_{ 0 }\)}
\noLine
\UnaryInfC{\(\Gamma \Rightarrow_s \Delta,\bot, \phi\)}
\AxiomC{\(\pi_{ 1 }\)}
\noLine
\UnaryInfC{\(\Gamma, \psi \Rightarrow_s \Delta,\bot\)}
\RightLabel{\(\to\)L}
\BinaryInfC{\(\Gamma, \phi \to \psi \Rightarrow_s \Delta, \bot\)}
\DisplayProof
\mapsto
\AxiomC{\(\textsf{inv}_{ \bot }(\pi_{ 0})\)}
\noLine
\UnaryInfC{\(\Gamma \Rightarrow_{ s} \Delta, \phi\)}
\AxiomC{\(\textsf{inv}_{ \bot }(\pi_{ 1 })\)}
\noLine
\UnaryInfC{\(\Gamma, \psi \Rightarrow_{s} \Delta\)}
\RightLabel{\(\to\)L}
\BinaryInfC{\(\Gamma, \phi \to \psi \Rightarrow_{s} \Delta\)}
\DisplayProof
\]

\[
\AxiomC{\(\pi_{ 0 }\)}
\noLine
\UnaryInfC{\(\Gamma, \phi \Rightarrow_s \psi, \Delta, \bot\)}
\RightLabel{\(\to\)R}
\UnaryInfC{\(\Gamma \Rightarrow_s \phi \to \psi, \Delta, \bot\)}
\DisplayProof
\mapsto
\AxiomC{\(\textsf{inv}_{ \bot }(\pi_{ 0})\)}
\noLine
\UnaryInfC{\(\Gamma, \phi \Rightarrow_{s} \psi, \Delta\)}
\RightLabel{\(\to\)R}
\UnaryInfC{\(\Gamma \Rightarrow_{s} \phi \to \psi, \Delta\)}
\DisplayProof
\]

\[
\AxiomC{\(\tau \vdash \Gamma, \dnecm \Pi \Rightarrow_\circ \phi\)}
\RightLabel{\(\nec\)}
\UnaryInfC{\(\Sigma, \nec \Gamma, \necm \Pi \Rightarrow_s \nec \phi, \Delta,\bot\)}
\DisplayProof
\mapsto 
\AxiomC{\(\tau \vdash \Gamma, \dnecm \Pi \Rightarrow_\circ \phi\)}
\RightLabel{\(\nec\)}
\UnaryInfC{\(\Sigma, \nec \Gamma, \necm \Pi \Rightarrow_{s} \nec \phi, \Delta\)}
\DisplayProof
\]

If \(\pi\) is:
\[
\AxiomC{\(\tau \vdash \Gamma, \dnecm \Pi \Rightarrow_\circ \phi\)}
\AxiomC{\(\pi_{ 0 }\)}
\noLine
\UnaryInfC{\(\Gamma, \dnecm \Pi \Rightarrow_\phi \necm \phi\)}
\RightLabel{\(\necm_f\)}
\BinaryInfC{\(\Sigma,\nec \Gamma, \necm \Pi \Rightarrow_s \necm \phi, \Delta,\bot\)}
\DisplayProof
\]
then it maps to
\[
\AxiomC{\(\tau \vdash \Gamma, \dnecm \Pi \Rightarrow_\circ \phi\)}
\AxiomC{\(\pi_{ 0 }\)}
\noLine
\UnaryInfC{\(\Gamma, \dnecm \Pi \Rightarrow_\phi \necm \phi\)}
\RightLabel{\(\necm_f\)}
\BinaryInfC{\(\Sigma,\nec \Gamma, \necm \Pi \Rightarrow_{s} \necm \phi, \Delta\)}
\DisplayProof
\]

If \(\pi\) is:
\[
\AxiomC{\(\tau_0 \vdash \Gamma, \dnecm \Pi \Rightarrow_\circ \phi\)}
\AxiomC{\(\tau_1 \vdash \Gamma, \dnecm \Pi \Rightarrow_\phi \necm \phi\)}
\RightLabel{\(\necm_u\)}
\BinaryInfC{\(\Sigma, \nec \Gamma, \necm \Pi \Rightarrow_s \necm \phi, \Delta, \bot\)}
\DisplayProof
\]
then it maps to
\[
\AxiomC{\(\tau_0 \vdash \Gamma, \dnecm \Pi \Rightarrow_\circ \phi\)}
\AxiomC{\(\tau_1 \vdash \Gamma, \dnecm \Pi \Rightarrow_\phi \necm \phi\)}
\RightLabel{\(\necm_u\)}
\BinaryInfC{\(\Sigma, \nec \Gamma, \necm \Pi \Rightarrow_{s} \necm \phi, \Delta\)}
\DisplayProof
\]

\[
\AxiomC{\(\pi_{ 0 }\)}
\noLine
\UnaryInfC{\(\Gamma \Rightarrow_{ s } \Delta,\bot, \chi\)}
\AxiomC{\(\pi_{ 1 }\)}
\noLine
\UnaryInfC{\(\chi, \Gamma \Rightarrow_{ s }\Delta, \bot\)}
\RightLabel{Cut}
\BinaryInfC{\(\Gamma \Rightarrow_{ s }\Delta, \bot\)}
\DisplayProof
\mapsto
\AxiomC{\(\textsf{inv}_{ \bot }(\pi_{ 0})\)}
\noLine
\UnaryInfC{\(\Gamma \Rightarrow_{ s } \Delta, \chi\)}
\AxiomC{\(\textsf{inv}_{ \bot }(\pi_{ 1})\)}
\noLine
\UnaryInfC{\(\chi, \Gamma \Rightarrow_{ s }\Delta \)}
\RightLabel{Cut}
\BinaryInfC{\(\Gamma, \Rightarrow_{ s }\Delta \)}
\DisplayProof
\]
\end{definition}

\begin{lemma}
The function \(\textsf{inv}_{ \bot }\) is strongly preserving.
\end{lemma}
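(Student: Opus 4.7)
The plan is to prove all six preservation properties simultaneously by induction on the local height of $\pi$. The essential observation driving the argument is that $\bot$ is never the principal formula of any right-side rule in $\text{G}^\infty_\ell \text{K}^+_s$, so the displayed $\bot$ in the succedent is always a side formula; it can be traced upwards through every rule, and at each rule instance it either sits in the weakening part of a modal rule or in the context of a propositional rule or cut.

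First I would dispatch the axiomatic cases trivially: the output is again an axiom of the same kind, so no cuts exist, no witnesses are present, and all six properties hold vacuously. For the propositional cases ($\to$L, $\to$R) and the $\textsf{Cut}$ case, the function recurses on the immediate subproofs $\pi_0$ (and $\pi_1$) and reapplies the same rule at the root. Here I invoke the induction hypothesis on each subproof. Preservation of ordinal height holds because the rule at the root introduces no new witnesses, so $\|\textsf{inv}_\bot(\pi)\|$ is determined by the recursive calls, whose ordinal heights are bounded by those of the corresponding $\pi_i$ by IH. Preservation of local height and of freeness/locality of cuts in the main local fragment hold because the rule at the root is of the same kind, so the main local fragment of the output is structurally isomorphic to that of the input. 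Preservation of the size-of-cuts property is trivial in the $\textsf{Cut}$ case because the cut formula $\chi$ is unchanged. Preservation of locality of cuts in witnesses follows since no new witnesses are introduced and IH carries the property through the recursive calls.

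For the modal cases ($\nec$, $\necm_f$, $\necm_u$), the displayed $\bot$ lies inside the weakening multiset $\Delta$ of the conclusion, which does not appear in the premises. Consequently the function does not recurse into the witnesses (nor into the right premise of $\necm_f$); it simply rewrites the concluding rule with one fewer $\bot$ in $\Delta$. This instantly secures all six preservation properties at these cases: the witnesses are literally unchanged, so their ordinal heights and internal cut structure are identical, and the rule at the root is still a modal rule of the same kind, so the main local fragment still terminates exactly there with the same cuts above.

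I do not expect a genuine obstacle. The induction is a direct structural traversal, and the modal cases — usually the most delicate for such auxiliary functions — are in fact the easiest here, precisely because $\bot$ can only occur as a side formula in the weakening part. The only mild annoyance will be the bookkeeping: checking each of the six clauses of \emph{strongly preserving} against each of the eight rule cases. But once the pattern is verified for one property (say, local height), the remaining verifications follow the same template without surprise.
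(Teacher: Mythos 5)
Your proof is correct and follows exactly the route the paper intends: the paper omits an explicit argument and simply notes that all properties of the auxiliary functions are shown "straightforwardly by induction on the local height," which is precisely your structural traversal, including the key observation that the modal cases are immediate because the succedent $\bot$ sits in the weakening part and the witnesses are left untouched.
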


\subsection{Inversion of \(\to\text{L}\)}
\subsubsection{Left inversion}

\begin{definition}
Given \(\pi \vdash \Gamma, \chi_{ 0 }\to \chi_{ 1 }\Rightarrow_{ s }\Delta\) in \(\text{G}^{ \infty }_{ \ell }\text{K}^{ + }_{ s }\) we can define, by recursion on the local height and cases in the last rule apply, the proof \(\textsf{linv}_{ \chi_{ 0 } \to \chi_{ 1 }}^{ 0 }(\pi)\) proof of \(\Gamma \Rightarrow_{ s }\Delta,\chi_{ 0 }\) in \(\text{G}^{ \infty }_{ \ell }\text{K}^{ + }_{ s }\) as:

\[
\AxiomC{}
\RightLabel{Ax}
\UnaryInfC{\(\Gamma, \chi_{ 0 }\to \chi_{ 1 }, p \Rightarrow_s p, \Delta\)}
\DisplayProof
\mapsto
\AxiomC{}
\RightLabel{Ax}
\UnaryInfC{\(\Gamma, p \Rightarrow_{s} p, \Delta, \chi_{ 0 }\)}
\DisplayProof
\]

\[
\AxiomC{}
\RightLabel{Ax-\(\bot\)}
\UnaryInfC{\(\Gamma,\chi_{ 0 }\to \chi_{ 1 },\bot \Rightarrow_s \Delta\)}
\DisplayProof
\mapsto
\AxiomC{}
\RightLabel{Ax-\(\bot\)}
\UnaryInfC{\(\Gamma,\bot \Rightarrow_{s} \Delta, \chi_{ 0 }\)}
\DisplayProof
\]

If \(\pi\) is of shape:
\[
\AxiomC{\(\pi_{ 0 }\)}
\noLine
\UnaryInfC{\(\Gamma, \chi_{ 0 }\to \chi_{ 1 } \Rightarrow_s \Delta, \phi\)}
\AxiomC{\(\pi_{ 1 }\)}
\noLine
\UnaryInfC{\(\Gamma, \chi_{ 0 }\to \chi_{ 1 }, \psi \Rightarrow_s \Delta\)}
\RightLabel{\(\to\)L}
\BinaryInfC{\(\Gamma, \chi_{ 0 }\to \chi_{ 1 }, \phi \to \psi \Rightarrow_s \Delta\)}
\DisplayProof
\]
we transform it to
\[
\AxiomC{\(\textsf{linv}_{ \chi_{ 0 } \to \chi_{ 1 }}^{ 0 }(\pi_{ 0})\)}
\noLine
\UnaryInfC{\(\Gamma \Rightarrow_{ s} \Delta, \chi_{ 0 }, \phi\)}
\AxiomC{\(\textsf{linv}_{ \chi_{ 0 } \to \chi_{ 1 }}^{ 0 }(\pi_{ 1 })\)}
\noLine
\UnaryInfC{\(\Gamma, \psi \Rightarrow_{s} \Delta, \chi_{ 0 }\)}
\RightLabel{\(\to\)L}
\BinaryInfC{\(\Gamma, \phi \to \psi \Rightarrow_{s} \Delta, \chi_{ 0 }\)}
\DisplayProof
\]

If \(\pi\) is of shape:
\[
\AxiomC{\(\pi_{ 0 }\)}
\noLine
\UnaryInfC{\(\Gamma \Rightarrow_s \Delta, \chi_{ 0 }\)}
\AxiomC{\(\pi_{ 1 }\)}
\noLine
\UnaryInfC{\(\Gamma, \chi_{ 1 } \Rightarrow_s \Delta\)}
\RightLabel{\(\to\)L}
\BinaryInfC{\(\Gamma, \chi_{ 0 }\to \chi_{ 1 }\Rightarrow_s \Delta\)}
\DisplayProof
\]
the desired proof is \(\pi_{ 0 }\).

\[
\AxiomC{\(\pi_{ 0 }\)}
\noLine
\UnaryInfC{\(\Gamma, \chi_{ 0 }\to \chi_{ 1 }, \phi \Rightarrow_s \psi, \Delta\)}
\RightLabel{\(\to\)R}
\UnaryInfC{\(\Gamma, \chi_{ 0 }\to \chi_{ 1 } \Rightarrow_s \phi \to \psi, \Delta\)}
\DisplayProof
\mapsto
\AxiomC{\(\textsf{linv}_{ \chi_{ 0 }\to\chi_{ 1 }}^{ 0 }(\pi_{ 0})\)}
\noLine
\UnaryInfC{\(\Gamma, \phi \Rightarrow_{s} \psi, \Delta, \chi_{ 0 }\)}
\RightLabel{\(\to\)R}
\UnaryInfC{\(\Gamma \Rightarrow_{s} \phi \to \psi, \Delta,\chi_{ 0 }\)}
\DisplayProof
\]

\[
\AxiomC{\(\tau \vdash \Gamma, \dnecm \Pi \Rightarrow_\circ \phi\)}
\RightLabel{\(\nec\)}
\UnaryInfC{\(\Sigma, \chi_{ 0 }\to \chi_{ 1 }, \nec \Gamma, \necm \Pi \Rightarrow_s \nec \phi, \Delta\)}
\DisplayProof
\mapsto 
\AxiomC{\(\tau \vdash \Gamma, \dnecm \Pi \Rightarrow_\circ \phi\)}
\RightLabel{\(\nec\)}
\UnaryInfC{\(\Sigma, \nec \Gamma, \necm \Pi \Rightarrow_{s} \nec \phi, \Delta, \chi_{ 0 }\)}
\DisplayProof
\]

If \(\pi\) is:
\[
\AxiomC{\(\tau \vdash \Gamma, \dnecm \Pi \Rightarrow_\circ \phi\)}
\AxiomC{\(\pi_{ 0 }\)}
\noLine
\UnaryInfC{\(\Gamma, \dnecm \Pi \Rightarrow_\phi \necm \phi\)}
\RightLabel{\(\necm_f\)}
\BinaryInfC{\(\Sigma,\chi_{ 0 }\to \chi_{ 1 },\nec \Gamma, \necm \Pi \Rightarrow_s \necm \phi, \Delta\)}
\DisplayProof
\]
then it maps to
\[
\AxiomC{\(\tau \vdash \Gamma, \dnecm \Pi \Rightarrow_\circ \phi\)}
\AxiomC{\(\pi_{ 0 }\)}
\noLine
\UnaryInfC{\(\Gamma, \dnecm \Pi \Rightarrow_\phi \necm \phi\)}
\RightLabel{\(\necm_f\)}
\BinaryInfC{\(\Sigma,\nec \Gamma, \necm \Pi \Rightarrow_{s} \necm \phi, \Delta, \chi_{ 0 }\)}
\DisplayProof
\]

If \(\pi\) is
\[
\AxiomC{\(\tau_0 \vdash \Gamma, \dnecm \Pi \Rightarrow_\circ \phi\)}
\AxiomC{\(\tau_1 \vdash \Gamma, \dnecm \Pi \Rightarrow_\phi \necm \phi\)}
\RightLabel{\(\necm_u\)}
\BinaryInfC{\(\Sigma, \chi_{ 0 }\to \chi_{ 1 }, \nec \Gamma, \necm \Pi \Rightarrow_s \necm \phi, \Delta\)}
\DisplayProof
\]
then it maps to
\[
\AxiomC{\(\tau_0 \vdash \Gamma, \dnecm \Pi \Rightarrow_\circ \phi\)}
\AxiomC{\(\tau_1 \vdash \Gamma, \dnecm \Pi \Rightarrow_\phi \necm \phi\)}
\RightLabel{\(\necm_u\)}
\BinaryInfC{\(\Sigma, \nec \Gamma, \necm \Pi \Rightarrow_{s} \necm \phi, \Delta, \chi_{ 0 }\)}
\DisplayProof
\]

If \(\pi\) is
\[
\AxiomC{\(\pi_{ 0 }\)}
\noLine
\UnaryInfC{\(\Gamma, \chi_{ 0 }\to \chi_{ 1 } \Rightarrow_{ s } \Delta, \phi\)}
\AxiomC{\(\pi_{ 1 }\)}
\noLine
\UnaryInfC{\(\phi, \Gamma, \chi_{ 0 }\to \chi_{ 1 } \Rightarrow_{ s }\Delta\)}
\RightLabel{Cut}
\BinaryInfC{\(\Gamma, \chi_{ 0 }\to \chi_{ 1 } \Rightarrow_{ s }\Delta\)}
\DisplayProof
\]
then it maps to
\[
\AxiomC{\(\textsf{linv}_{ \chi_{ 0 } \to \chi_{ 1 }}^{ 0 }(\pi_{ 0})\)}
\noLine
\UnaryInfC{\(\Gamma \Rightarrow_{ s } \Delta, \chi_{ 0 }, \phi\)}
\AxiomC{\(\textsf{linv}_{ \chi_{ 0 } \to \chi_{ 1 }}^{ 0 }(\pi_{ 1})\)}
\noLine
\UnaryInfC{\(\phi, \Gamma \Rightarrow_{ s }\Delta, \chi_{ 0 } \)}
\RightLabel{Cut}
\BinaryInfC{\(\Gamma, \Rightarrow_{ s }\Delta, \chi_{ 0 } \)}
\DisplayProof
\]
\end{definition}

\begin{lemma}
The function \(\textsf{linv}_{ \chi_{ 0 }\to\chi_{ 1 }}^{ 0 }\) is strongly preserving.
\end{lemma}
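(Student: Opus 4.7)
The plan is to proceed by induction on the local height $|\pi|$, splitting cases on the last rule applied in $\pi$ following the case structure of the definition of $\textsf{linv}^0_{\chi_0\to\chi_1}$, and verifying each of the six preservation properties (ordinal height, local height, sizes of cuts, freeness of cuts in the main local fragment, locality of cuts, and locality of cuts in witnesses) in every case.

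The axiomatic cases are trivial: the output is an axiom, so it has no cuts, local height $1$, and ordinal height $0$, which dominates the corresponding data of the input. The principal case — when the last rule is $\to$L with $\chi_0\to\chi_1$ as principal formula — is also easy: the function returns the left premise $\pi_0$, which is a subproof of $\pi$ living in the same local fragment. Its local height is strictly smaller than $|\pi|$, its ordinal height is at most $\|\pi\|$, and every one of its cuts is a cut of $\pi$, so all six preservation properties are inherited directly.

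For the non-principal propositional cases ($\to$L non-principal and $\to$R) and for $\textsf{Cut}$, the transformation rebuilds the same rule over the applications of the inductive hypothesis to the premises. Local and ordinal heights of the output are thus bounded by the corresponding quantities for $\pi$, matching $|\pi|$ and $\|\pi\|$. In the $\textsf{Cut}$ case the cut formula is unchanged, so size bounds on cuts are preserved, and the cut at the root of the output exactly mirrors the cut at the root of $\pi$, so freeness-of-cuts-in-the-main-local-fragment and locality of cuts are both preserved. For the modal rules ($\nec$, $\necm_f$, $\necm_u$) the witnesses (and, in the $\necm_f$ case, the right premise $\pi_0$) are left untouched; only the sequent at the conclusion is modified by removing $\chi_0\to\chi_1$ from the left weakening part and adding $\chi_0$ to the right weakening part. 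Since no subproof or witness is altered, every preservation property holds trivially.

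The one point that requires a moment of care is that in the modal cases we must actually be allowed to extract $\chi_0\to\chi_1$ from the left-hand side of the conclusion. This is immediate because $\chi_0\to\chi_1$ is not of the form $\nec\psi$ or $\necm\psi$, so it cannot sit inside $\nec\Gamma$ or $\necm\Pi$ and must belong to the weakening multiset $\Sigma$. Beyond this observation the argument is entirely mechanical, so no genuine obstacle arises: the lemma reduces to a bookkeeping check across the cases already spelled out in the definition.
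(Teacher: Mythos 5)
Your proof is correct and follows exactly the route the paper intends: the paper omits the proof entirely, remarking only that all the auxiliary functions' properties "can be shown straightforwardly by induction on the local height," which is precisely the case-by-case bookkeeping you carry out (including the one non-trivial observation that $\chi_0\to\chi_1$, not being boxed, must lie in the weakening part $\Sigma$ of a modal conclusion).
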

\subsubsection{Right inversion}

\begin{definition}
Given \(\pi \vdash \Gamma, \chi_{ 0 }\to \chi_{ 1 }\Rightarrow_{ s }\Delta\) in \(\text{G}^{ \infty }_{ \ell }\text{K}^{ + }_{ s }\) we can define, by recursion on the local height and cases in the last rule apply, the proof \(\textsf{linv}_{ \chi_{ 0 } \to \chi_{ 1 }}^{ 1 }(\pi)\) proof of \(\Gamma, \chi_{ 1 } \Rightarrow_{ s }\Delta_{}\) in \(\text{G}^{ \infty }_{ \ell }\text{K}^{ + }_{ s }\) as:

\[
\AxiomC{}
\RightLabel{Ax}
\UnaryInfC{\(\Gamma, \chi_{ 0 }\to \chi_{ 1 }, p \Rightarrow_s p, \Delta\)}
\DisplayProof
\mapsto
\AxiomC{}
\RightLabel{Ax}
\UnaryInfC{\(\Gamma, \chi_{ 1 }, p \Rightarrow_{s} p, \Delta\)}
\DisplayProof
\]

\[
\AxiomC{}
\RightLabel{Ax-\(\bot\)}
\UnaryInfC{\(\Gamma,\chi_{ 0 }\to \chi_{ 1 },\bot \Rightarrow_s \Delta\)}
\DisplayProof
\mapsto
\AxiomC{}
\RightLabel{Ax-\(\bot\)}
\UnaryInfC{\(\Gamma,\chi_{ 1 },\bot \Rightarrow_{s} \Delta\)}
\DisplayProof
\]

If \(\pi\) is of shape:
\[
\AxiomC{\(\pi_{ 0 }\)}
\noLine
\UnaryInfC{\(\Gamma, \chi_{ 0 }\to \chi_{ 1 } \Rightarrow_s \Delta, \phi\)}
\AxiomC{\(\pi_{ 1 }\)}
\noLine
\UnaryInfC{\(\Gamma, \chi_{ 0 }\to \chi_{ 1 }, \psi \Rightarrow_s \Delta\)}
\RightLabel{\(\to\)L}
\BinaryInfC{\(\Gamma, \chi_{ 0 }\to \chi_{ 1 }, \phi \to \psi \Rightarrow_s \Delta\)}
\DisplayProof
\]
we transform it to
\[
\AxiomC{\(\textsf{linv}_{ \chi_{ 0 } \to \chi_{ 1 }}^{ 1 }(\pi_{ 0})\)}
\noLine
\UnaryInfC{\(\Gamma, \chi_{ 1 } \Rightarrow_{ s} \Delta, \phi\)}
\AxiomC{\(\textsf{linv}_{ \chi_{ 0 } \to \chi_{ 1 }}^{ 1 }(\pi_{ 1 })\)}
\noLine
\UnaryInfC{\(\Gamma, \chi_{ 1 }, \psi \Rightarrow_{s} \Delta\)}
\RightLabel{\(\to\)L}
\BinaryInfC{\(\Gamma, \chi_{ 1 }, \phi \to \psi \Rightarrow_{s} \Delta\)}
\DisplayProof
\]

If \(\pi\) is of shape:
\[
\AxiomC{\(\pi_{ 0 }\)}
\noLine
\UnaryInfC{\(\Gamma \Rightarrow_s \Delta, \chi_{ 0 }\)}
\AxiomC{\(\pi_{ 1 }\)}
\noLine
\UnaryInfC{\(\Gamma, \chi_{ 1 } \Rightarrow_s \Delta\)}
\RightLabel{\(\to\)L}
\BinaryInfC{\(\Gamma, \chi_{ 0 }\to \chi_{ 1 }\Rightarrow_s \Delta\)}
\DisplayProof
\]
the desired proof is \(\pi_{ 1 }\).

\[
\AxiomC{\(\pi_{ 0 }\)}
\noLine
\UnaryInfC{\(\Gamma, \chi_{ 0 }\to \chi_{ 1 }, \phi \Rightarrow_s \psi, \Delta\)}
\RightLabel{\(\to\)R}
\UnaryInfC{\(\Gamma, \chi_{ 0 }\to \chi_{ 1 } \Rightarrow_s \phi \to \psi, \Delta\)}
\DisplayProof
\mapsto
\AxiomC{\(\textsf{linv}_{ \chi_{ 0 }\to\chi_{ 1 }}^{ 1 }(\pi_{ 0})\)}
\noLine
\UnaryInfC{\(\Gamma, \chi_{ 1 }, \phi \Rightarrow_{s} \psi, \Delta\)}
\RightLabel{\(\to\)R}
\UnaryInfC{\(\Gamma, \chi_{ 1 } \Rightarrow_{s} \phi \to \psi, \Delta\)}
\DisplayProof
\]

\[
\AxiomC{\(\tau \vdash \Gamma, \dnecm \Pi \Rightarrow_\circ \phi\)}
\RightLabel{\(\nec\)}
\UnaryInfC{\(\Sigma, \chi_{ 0 }\to \chi_{ 1 }, \nec \Gamma, \necm \Pi \Rightarrow_s \nec \phi, \Delta\)}
\DisplayProof
\mapsto 
\AxiomC{\(\tau \vdash \Gamma, \dnecm \Pi \Rightarrow_\circ \phi\)}
\RightLabel{\(\nec\)}
\UnaryInfC{\(\Sigma, \chi_{ 1 }, \nec \Gamma, \necm \Pi \Rightarrow_{s} \nec \phi, \Delta\)}
\DisplayProof
\]

If \(\pi\) is:
\[
\AxiomC{\(\tau \vdash \Gamma, \dnecm \Pi \Rightarrow_\circ \phi\)}
\AxiomC{\(\pi_{ 0 }\)}
\noLine
\UnaryInfC{\(\Gamma, \dnecm \Pi \Rightarrow_\phi \necm \phi\)}
\RightLabel{\(\necm_f\)}
\BinaryInfC{\(\Sigma,\chi_{ 0 }\to \chi_{ 1 },\nec \Gamma, \necm \Pi \Rightarrow_s \necm \phi, \Delta\)}
\DisplayProof
\]
then it maps to
\[
\AxiomC{\(\tau \vdash \Gamma, \dnecm \Pi \Rightarrow_\circ \phi\)}
\AxiomC{\(\pi_{ 0 }\)}
\noLine
\UnaryInfC{\(\Gamma, \dnecm \Pi \Rightarrow_\phi \necm \phi\)}
\RightLabel{\(\necm_f\)}
\BinaryInfC{\(\Sigma, \chi_{ 1 },\nec \Gamma, \necm \Pi \Rightarrow_{s} \necm \phi, \Delta\)}
\DisplayProof
\]

If \(\pi\) is
\[
\AxiomC{\(\tau_0 \vdash \Gamma, \dnecm \Pi \Rightarrow_\circ \phi\)}
\AxiomC{\(\tau_1 \vdash \Gamma, \dnecm \Pi \Rightarrow_\phi \necm \phi\)}
\RightLabel{\(\necm_u\)}
\BinaryInfC{\(\Sigma, \chi_{ 0 }\to \chi_{ 1 }, \nec \Gamma, \necm \Pi \Rightarrow_s \necm \phi, \Delta\)}
\DisplayProof
\]
then it maps to
\[
\AxiomC{\(\tau_0 \vdash \Gamma, \dnecm \Pi \Rightarrow_\circ \phi\)}
\AxiomC{\(\tau_1 \vdash \Gamma, \dnecm \Pi \Rightarrow_\phi \necm \phi\)}
\RightLabel{\(\necm_u\)}
\BinaryInfC{\(\Sigma, \chi_{ 1 }, \nec \Gamma, \necm \Pi \Rightarrow_{s} \necm \phi, \Delta\)}
\DisplayProof
\]

If \(\pi\) is
\[
\AxiomC{\(\pi_{ 0 }\)}
\noLine
\UnaryInfC{\(\Gamma, \chi_{ 0 }\to \chi_{ 1 } \Rightarrow_{ s } \Delta, \phi\)}
\AxiomC{\(\pi_{ 1 }\)}
\noLine
\UnaryInfC{\(\phi, \Gamma, \chi_{ 0 }\to \chi_{ 1 } \Rightarrow_{ s }\Delta\)}
\RightLabel{Cut}
\BinaryInfC{\(\Gamma, \chi_{ 0 }\to \chi_{ 1 } \Rightarrow_{ s }\Delta\)}
\DisplayProof
\]
then it maps to
\[
\AxiomC{\(\textsf{linv}_{ \chi_{ 0 } \to \chi_{ 1 }}^{ 1 }(\pi_{ 0})\)}
\noLine
\UnaryInfC{\(\Gamma, \chi_{ 1 } \Rightarrow_{ s } \Delta, \phi\)}
\AxiomC{\(\textsf{linv}_{ \chi_{ 0 } \to \chi_{ 1 }}^{ 1 }(\pi_{ 1})\)}
\noLine
\UnaryInfC{\(\phi, \Gamma, \chi_{ 1 } \Rightarrow_{ s }\Delta\)}
\RightLabel{Cut}
\BinaryInfC{\(\Gamma, \chi_{ 1 } \Rightarrow_{ s }\Delta\)}
\DisplayProof
\]
\end{definition}

\begin{lemma}
The function \(\textsf{linv}_{ \chi_{ 0 }\to\chi_{1}}^{1}\) is strongly preserving.
\end{lemma}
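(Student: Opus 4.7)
The plan is to prove all six preservation properties simultaneously by induction on the local height of $\pi$, following the recursive definition of $\textsf{linv}^{1}_{\chi_0 \to \chi_1}$ just given. Since the pattern mirrors the preceding lemmas (for $\textsf{wk}$, $\textsf{lctr}_p$, $\textsf{rctr}_p$, $\textsf{rctr}_{\necm\chi}$, $\textsf{inv}_\bot$, $\textsf{linv}^0_{\chi_0 \to \chi_1}$), most cases are essentially routine and need only the observation that each clause in the definition either (i) applies the function recursively to immediate subproofs and rebuilds with the same rule, (ii) leaves witnesses untouched when the principal formula is modal, or (iii) returns a proper subproof of $\pi$.

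First I would dispatch the axiomatic cases, where both proofs are one-step and all six properties hold trivially. For the $\to$R, $\to$L-non-principal, and \textsf{Cut} cases, the rule at the root is the same before and after, so the local height is preserved and each cut in the output is either the same cut as before or a direct image of one, of the same size; locality and freeness of cuts in the main local fragment follow from the inductive hypothesis applied to the premises. For the modal cases ($\nec$, $\necm_f$, $\necm_u$), the principal formula $\chi_0 \to \chi_1$ sits in the weakening part $\Sigma$, so the witnesses $\tau, \tau_0, \tau_1$ are copied verbatim and the right premise of $\necm_f$ (which lies in a new local fragment) is copied verbatim as well; thus ordinal height, locality of cuts in witnesses, and local height are trivially preserved.

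The only genuinely different clause is the principal $\to$L case, where the definition returns $\pi_1$ directly. Here I would argue as follows: $\pi_1$ is an immediate subproof of $\pi$, so its ordinal height is strictly smaller than $\|\pi\|$, its local height is smaller than $|\pi|$, all its cuts are cuts of $\pi$ (so size bounds are inherited), and if $\pi$ has no cuts in its main local fragment / has local cuts only / has witnesses with local cuts only, then so does $\pi_1$. This single clause is where one might worry a property could break, but because we descend to a subproof rather than manipulate it, every property transfers for free.

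The main obstacle, such as it is, consists merely of bookkeeping: making sure that in the commutative \textsf{Cut} clause the cut formula $\phi$ is unchanged (so sizes of cuts are bounded by the same $n$), and that in modal clauses we correctly recognize that the weakening-part discharge of $\chi_0 \to \chi_1$ does not disturb the main local fragment boundary, hence freeness of cuts in the main local fragment and locality of cuts in witnesses follow from the hypothesis on $\pi$ itself without invoking the inductive hypothesis. Collecting these observations yields the six desired preservation properties and completes the proof.
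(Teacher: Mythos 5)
Your proof is correct and follows exactly the route the paper intends (the appendix lemmas are all justified by the same straightforward induction on local height over the clauses of the recursive definition, with the principal \(\to\)L clause returning the subproof \(\pi_1\) being the only non-cosmetic case). One tiny inaccuracy: for the principal case you claim \(\|\pi_1\|\) is \emph{strictly} smaller than \(\|\pi\|\), but ordinal height only strictly decreases when passing to a witness, not to an immediate subproof; since preservation only requires \(\|\pi_1\|\leq\|\pi\|\), this does not affect the argument.
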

\subsection{Inversion of \(\to\text{R}\)}
\begin{definition}
Given \(\pi \vdash \Gamma \Rightarrow_{ s} \chi_{ 0 }\to \chi_{ 1 }, \Delta\) in \(\text{G}^{ \infty }_{ \ell }\text{K}^{ + }_{ s }\) we can define, by recursion on the local height and cases in the last rule apply, the proof \(\textsf{rinv}_{ \chi_{ 0 } \to \chi_{ 1 }}(\pi)\) proof of \(\Gamma, \chi_{ 0 } \Rightarrow_{ s }\Delta,\chi_{ 1 }\) in \(\text{G}^{ \infty }_{ \ell }\text{K}^{ + }_{ s }\) as:

\[
\AxiomC{}
\RightLabel{Ax}
\UnaryInfC{\(\Gamma, p \Rightarrow_s p, \Delta, \chi_{ 0 }\to \chi_{ 1 }\)}
\DisplayProof
\mapsto
\AxiomC{}
\RightLabel{Ax}
\UnaryInfC{\(\Gamma, \chi_{ 0 }, p \Rightarrow_{s} p, \Delta, \chi_{ 1 }\)}
\DisplayProof
\]

\[
\AxiomC{}
\RightLabel{Ax-\(\bot\)}
\UnaryInfC{\(\Gamma,\bot \Rightarrow_s \Delta, \chi_{ 0 }\to \chi_{ 1 }\)}
\DisplayProof
\mapsto
\AxiomC{}
\RightLabel{Ax-\(\bot\)}
\UnaryInfC{\(\Gamma,\chi_{ 0 },\bot \Rightarrow_{s} \Delta, \chi_{ 1 }\)}
\DisplayProof
\]

If \(\pi\) is of shape:
\[
\AxiomC{\(\pi_{ 0 }\)}
\noLine
\UnaryInfC{\(\Gamma \Rightarrow_s \Delta, \chi_{ 0 }\to \chi_{ 1 }, \phi\)}
\AxiomC{\(\pi_{ 1 }\)}
\noLine
\UnaryInfC{\(\Gamma, \psi \Rightarrow_s \Delta, \chi_{ 0 }\to \chi_{ 1 }\)}
\RightLabel{\(\to\)L}
\BinaryInfC{\(\Gamma, \phi \to \psi \Rightarrow_s \Delta, , \chi_{ 0 }\to \chi_{ 1 }\)}
\DisplayProof
\]
we transform it to
\[
\AxiomC{\(\textsf{rinv}_{ \chi_{ 0 } \to \chi_{ 1 }}(\pi_{ 0})\)}
\noLine
\UnaryInfC{\(\Gamma, \chi_{ 0 } \Rightarrow_{ s} \Delta, \chi_{ 1 }, \phi\)}
\AxiomC{\(\textsf{rinv}_{ \chi_{ 0 } \to \chi_{ 1 }}(\pi_{ 1 })\)}
\noLine
\UnaryInfC{\(\Gamma, \chi_{ 0 }, \psi \Rightarrow_{s} \Delta, \chi_{ 1 }\)}
\RightLabel{\(\to\)L}
\BinaryInfC{\(\Gamma, \chi_{ 0 }, \phi \to \psi \Rightarrow_{s} \Delta, \chi_{ 1 }\)}
\DisplayProof
\]

\[
\AxiomC{\(\pi_{ 0 }\)}
\noLine
\UnaryInfC{\(\Gamma, \phi \Rightarrow_s \psi, \Delta, \chi_{ 0 }\to \chi_{ 1}\)}
\RightLabel{\(\to\)R}
\UnaryInfC{\(\Gamma \Rightarrow_s \phi \to \psi, \Delta, \chi_{ 0 }\to \chi_{ 1}\)}
\DisplayProof
\mapsto
\AxiomC{\(\textsf{rinv}_{ \chi_{ 0 }\to\chi_{ 1 }}(\pi_{ 0})\)}
\noLine
\UnaryInfC{\(\Gamma, \chi_{ 0 }, \phi \Rightarrow_{s} \psi, \Delta, \chi_{ 1 }\)}
\RightLabel{\(\to\)R}
\UnaryInfC{\(\Gamma, \chi_{ 0 } \Rightarrow_{s} \phi \to \psi, \Delta, \chi_{ 1 }\)}
\DisplayProof
\]

\[
\AxiomC{\(\pi_{ 0 }\)}
\noLine
\UnaryInfC{\(\Gamma, \chi_{ 0 } \Rightarrow_s \chi_{ 1 }, \Delta\)}
\RightLabel{\(\to\)R}
\UnaryInfC{\(\Gamma \Rightarrow_s \chi_{ 0 } \to \chi_{ 1 }, \Delta\)}
\DisplayProof
\]
the desired proof is \(\pi_{ 0 }\).

\[
\AxiomC{\(\tau \vdash \Gamma, \dnecm \Pi \Rightarrow_\circ \phi\)}
\RightLabel{\(\nec\)}
\UnaryInfC{\(\Sigma, \nec \Gamma, \necm \Pi \Rightarrow_s \nec \phi, \Delta, \chi_{ 0 }\to \chi_{ 1 }\)}
\DisplayProof
\mapsto 
\AxiomC{\(\tau \vdash \Gamma, \dnecm \Pi \Rightarrow_\circ \phi\)}
\RightLabel{\(\nec\)}
\UnaryInfC{\(\Sigma, \chi_{ 0 }, \nec \Gamma, \necm \Pi \Rightarrow_{s} \nec \phi, \Delta, \chi_{ 1 }\)}
\DisplayProof
\]

If \(\pi\) is:
\[
\AxiomC{\(\tau \vdash \Gamma, \dnecm \Pi \Rightarrow_\circ \phi\)}
\AxiomC{\(\pi_{ 0 }\)}
\noLine
\UnaryInfC{\(\Gamma, \dnecm \Pi \Rightarrow_\phi \necm \phi\)}
\RightLabel{\(\necm_f\)}
\BinaryInfC{\(\Sigma,\nec \Gamma, \necm \Pi \Rightarrow_s \necm \phi, \Delta, \chi_{ 0 }\to \chi_{ 1 }\)}
\DisplayProof
\]
then it maps to
\[
\AxiomC{\(\tau \vdash \Gamma, \dnecm \Pi \Rightarrow_\circ \phi\)}
\AxiomC{\(\pi_{ 0 }\)}
\noLine
\UnaryInfC{\(\Gamma, \dnecm \Pi \Rightarrow_\phi \necm \phi\)}
\RightLabel{\(\necm_f\)}
\BinaryInfC{\(\Sigma, \chi_{ 0 },\nec \Gamma, \necm \Pi \Rightarrow_{s} \necm \phi, \Delta, \chi_{ 1 }\)}
\DisplayProof
\]

If \(\pi\) is
\[
\AxiomC{\(\tau_0 \vdash \Gamma, \dnecm \Pi \Rightarrow_\circ \phi\)}
\AxiomC{\(\tau_1 \vdash \Gamma, \dnecm \Pi \Rightarrow_\phi \necm \phi\)}
\RightLabel{\(\necm_u\)}
\BinaryInfC{\(\Sigma, \nec \Gamma, \necm \Pi \Rightarrow_s \necm \phi, \Delta,  \chi_{ 0 }\to \chi_{ 1 }\)}
\DisplayProof
\]
then it maps to
\[
\AxiomC{\(\tau_0 \vdash \Gamma, \dnecm \Pi \Rightarrow_\circ \phi\)}
\AxiomC{\(\tau_1 \vdash \Gamma, \dnecm \Pi \Rightarrow_\phi \necm \phi\)}
\RightLabel{\(\necm_u\)}
\BinaryInfC{\(\Sigma, \chi_{ 0 }, \nec \Gamma, \necm \Pi \Rightarrow_{s} \necm \phi, \Delta, \chi_{ 1 }\)}
\DisplayProof
\]

If \(\pi\) is
\[
\AxiomC{\(\pi_{ 0 }\)}
\noLine
\UnaryInfC{\(\Gamma \Rightarrow_{ s } \Delta, , \chi_{ 0 }\to \chi_{ 1 }, \phi\)}
\AxiomC{\(\pi_{ 1 }\)}
\noLine
\UnaryInfC{\(\phi, \Gamma \Rightarrow_{ s }\Delta, \chi_{ 0 }\to \chi_{ 1 }\)}
\RightLabel{Cut}
\BinaryInfC{\(\Gamma \Rightarrow_{ s }\Delta, \chi_{ 0 }\to \chi_{ 1 }\)}
\DisplayProof
\]
then it maps to
\[
\AxiomC{\(\textsf{rinv}_{ \chi_{ 0 } \to \chi_{ 1 }}(\pi_{ 0})\)}
\noLine
\UnaryInfC{\(\Gamma, \chi_{ 0 } \Rightarrow_{ s } \Delta, \chi_{ 1 }, \phi\)}
\AxiomC{\(\textsf{rinv}_{ \chi_{ 0 } \to \chi_{ 1 }}(\pi_{ 1})\)}
\noLine
\UnaryInfC{\(\phi, \Gamma, \chi_{ 0 } \Rightarrow_{ s }\Delta, \chi_{ 1 }\)}
\RightLabel{Cut}
\BinaryInfC{\(\Gamma, \chi_{ 0 } \Rightarrow_{ s }\Delta, \chi_{ 1 }\)}
\DisplayProof
\]
\end{definition}

\begin{lemma}
The function \(\textsf{rinv}_{ \chi_0\to\chi_1}\) is strongly preserving.
\end{lemma}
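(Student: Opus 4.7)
The plan is to prove the six clauses in the definition of \emph{strongly preserving} simultaneously by induction on the local height of the input proof $\pi$, using the case analysis already spelled out in the construction of $\textsf{rinv}_{\chi_0\to\chi_1}$. This is the same pattern used for \textsf{wk}, \textsf{lctr}, \textsf{rctr}, \textsf{inv}$_\bot$, and the other inversion functions treated in the appendices; the work is entirely routine case-chasing, so I would only highlight the nontrivial observations rather than write out each case.

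First, I would dispatch the axiom cases (Ax and Ax-$\bot$): the output is itself an axiom, so all six clauses hold trivially. For the propositional cases $\to$L and the non-principal $\to$R, and for the \textsf{Cut} case, the transformation replaces each premise $\pi_i$ by $\textsf{rinv}_{\chi_0\to\chi_1}(\pi_i)$ under the same outer rule. By the induction hypothesis each recursive call preserves all six properties, and since the outer rule is unchanged (in particular, the cut formula of the outer \textsf{Cut} is untouched), local height, ordinal height, cut sizes, and locality properties all lift directly. For the modal cases ($\nec$, $\necm_f$, $\necm_u$), the transformation keeps the \emph{same} witnesses $\tau, \tau_0, \tau_1$ and only rewrites the weakening part of the conclusion, moving $\chi_0\to\chi_1$ in $\Delta$ into $\chi_0$ on the left and $\chi_1$ on the right. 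Consequently the height, the shape of the main local fragment, and the witnesses themselves are preserved verbatim, so all clauses follow immediately.

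The only slightly delicate case is the principal $\to$R step, where $\pi$ ends in $\to$R with principal formula $\chi_0\to\chi_1$ and the output is simply the premise $\pi_0$. Here $\pi_0$ is a subproof of $\pi$, so ordinal height and local height strictly decrease, the sizes of cuts occurring in $\pi_0$ are already bounded by the corresponding bound for $\pi$, and any cut appearing in the main local fragment of $\pi_0$ was already in the main local fragment of $\pi$; hence freeness of cuts in the main local fragment, locality of cuts, and locality of cuts in witnesses all transfer without change.

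The main obstacle, if any, is purely bookkeeping: one has to check that in the modal cases the definition of ``main local fragment'' is stable under the cosmetic relocation of $\chi_0$ and $\chi_1$ across the sequent arrow. This is immediate because the main local fragment is determined by the positions of modal rule applications, which are identical in $\pi$ and $\textsf{rinv}_{\chi_0\to\chi_1}(\pi)$, and witnesses are copied unchanged, so ``cuts in witnesses of witnesses'' conditions propagate without modification. With these observations the induction closes uniformly across all clauses.
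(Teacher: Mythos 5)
Your proposal is correct and matches the paper's intended argument: the paper states this lemma without an explicit proof, noting only that all the auxiliary functions' properties "can be shown straightforwardly by induction on the local height," which is precisely the induction you carry out. Your treatment of the principal $\to$R case (where the output is the subproof $\pi_0$) and of the modal cases (where witnesses are copied verbatim) correctly identifies the only points needing any comment.
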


\newpage
\section{Cut reductions}\label{sec:cut-reductions}

Here we display all the cut-reductions we are going to use.
We will write \(\textsf{cut}_i\) with \(i \in \NN\) to have a way to refer to each instance of cut since when we use the cut-reductions in proofs, it may be the case that we leave cuts as cuts or that we replace them by some proof provided by cut-admissibility.

We notice that the cut reductions of this section provides all possible cut reductions when we have a \(\pi \vdash \Gamma \Rightarrow_s \Delta, \chi\) and \(\tau \vdash \chi, \Gamma \Rightarrow_s \Delta\) for an arbitrary \(\chi\) (the cut formula).
In case \(\chi\) is of a particular shape (e.g.\ a propositional variable or a box formula) we will mention explicitly in the proof which are the cut reductions that appear.
We are prepared to write all the cut reductions:

\subsection{Labeling and cut formula coincides}\label{sec:cut-reduction-label}

Let \(\chi = \necm \chi_0\) and \(s = \chi_0\). Since \(\chi, \Gamma \Rightarrow_s \Delta\) is the conclusion of \(\tau\) and \(s\) is a formula, we know that \(\necm s = \necm \chi_0\) must appear in \(\Delta\).\footnote{We remember that, by definition of annotated sequent, if \(\Gamma \Rightarrow_\phi \Delta\) then \(\necm \phi\) must occur in \(\Delta\).}  
Then \(\Delta = \Delta_0, \chi\) and the conclusion of \(\pi\) would be \(\Gamma \Rightarrow_s \Delta_0, \necm \chi_0, \chi\), i.e.\ \(\Gamma \Rightarrow_s \Delta_0, \chi, \chi\).

We have the cut reduction
\[
	\AxiomC{\(\pi\)}
	\noLine
	\UnaryInfC{\(\Gamma \Rightarrow_s \Delta_0, \chi, \chi\)}
	\DisplayProof
	\AxiomC{\(\tau\)}
	\noLine
	\UnaryInfC{\(\chi, \Gamma \Rightarrow_s \Delta_0, \chi\)}
	\DisplayProof
	\mapsto
	\AxiomC{\(\textsf{rctr}_{\necm \chi_0}(\pi)\)}
	\noLine
	\UnaryInfC{\(\Gamma \Rightarrow_s \Delta_0, \chi\)}
	\DisplayProof
\]

From now own we assume that either \(\chi\) is not of shape \(\necm \chi_0\) or if it is then \(s \neq \chi_0\).

\subsection{Axiomatic cases}
Assume that either \(\pi\) or \(\tau\) is axiomatic.

\textbf{Axiomatic character is in side formula}
\[
	\AxiomC{}
	\UnaryInfC{\(\Gamma, p \Rightarrow_s p, \Delta, \chi\)}
	\DisplayProof
	\hspace{0.3cm}
	\AxiomC{\(\tau\)}
	\noLine
	\UnaryInfC{\(\chi, \Gamma, p \Rightarrow_s p, \Delta\)}
	\DisplayProof
	\mapsto
	\AxiomC{}
	\UnaryInfC{\(\Gamma, p \Rightarrow_s p, \Delta\)}
	\DisplayProof
\]
\[
	\AxiomC{\(\pi\)}
	\noLine
	\UnaryInfC{\(\Gamma, p \Rightarrow_s p, \Delta, \chi\)}
	\DisplayProof
	\hspace{0.3cm}
	\AxiomC{}
	\UnaryInfC{\(\chi, \Gamma, p \Rightarrow_s p, \Delta\)}
	\DisplayProof
	\mapsto
	\AxiomC{}
	\UnaryInfC{\(\Gamma, p \Rightarrow_s p, \Delta\)}
	\DisplayProof
\]
\[
	\AxiomC{}
	\UnaryInfC{\(\Gamma, \bot \Rightarrow_s \Delta, \chi\)}
	\DisplayProof
	\hspace{0.3cm}
	\AxiomC{\(\tau\)}
	\noLine
	\UnaryInfC{\(\chi, \Gamma, \bot \Rightarrow_s \Delta\)}
	\DisplayProof
	\mapsto
	\AxiomC{}
	\UnaryInfC{\(\Gamma, \bot \Rightarrow_s p, \Delta\)}
	\DisplayProof
\]
\[
	\AxiomC{\(\pi\)}
	\noLine
	\UnaryInfC{\(\Gamma, \bot \Rightarrow_s \Delta, \chi\)}
	\DisplayProof
	\hspace{0.3cm}
	\AxiomC{}
	\UnaryInfC{\(\chi, \Gamma, \bot \Rightarrow_s \Delta\)}
	\DisplayProof
	\mapsto
	\AxiomC{}
	\UnaryInfC{\(\Gamma, \bot \Rightarrow_s p, \Delta\)}
	\DisplayProof
\]
\textbf{Axiomatic character is in the cut formula}
\[
	\AxiomC{}
	\UnaryInfC{\(\Gamma, p \Rightarrow_s \Delta, p\)}
	\DisplayProof
	\hspace{0.3cm}
	\AxiomC{\(\tau\)}
	\noLine
	\UnaryInfC{\(p, \Gamma, p \Rightarrow_s \Delta\)}
	\DisplayProof
	\mapsto
	\AxiomC{\(\text{lctr}_p(\tau)\)}
	\noLine
	\UnaryInfC{\(\Gamma, p \Rightarrow_s \Delta\)}
	\DisplayProof
\]
\[
	\AxiomC{\(\pi\)}
	\noLine
	\UnaryInfC{\(\Gamma \Rightarrow_s p, \Delta, p\)}
	\DisplayProof
	\hspace{0.3cm}
	\AxiomC{}
	\UnaryInfC{\(p, \Gamma \Rightarrow_s p, \Delta\)}
	\DisplayProof
	\mapsto
	\AxiomC{\(\text{rctr}_p(\pi)\)}
	\noLine
	\UnaryInfC{\(\Gamma \Rightarrow_s p, \Delta\)}
	\DisplayProof
\]
\[
	\AxiomC{\(\pi\)}
	\noLine
	\UnaryInfC{\(\Gamma \Rightarrow_s \Delta, \bot\)}
	\DisplayProof
	\hspace{0.3cm}
	\AxiomC{}
	\UnaryInfC{\(\bot, \Gamma \Rightarrow_s \Delta\)}
	\DisplayProof
	\mapsto
	\AxiomC{\(\text{inv}_\bot(\pi)\)}
	\noLine
	\UnaryInfC{\(\Gamma \Rightarrow_s \Delta\)}
	\DisplayProof
\]

From now own we assume that neither \(\pi\) nor \(\tau\) is axiomatic, so \(\pi\) and \(\tau\) have each a principal formula.

\subsection{Principal reduction}
Assume the cut formula is principal in \(\pi\) and \(\tau\).
Then \(\chi = \chi_0 \to \chi_1\) and the cut reduction has shape

\[ 
	\AxiomC{\(\pi_0\)}
	\noLine
	\UnaryInfC{\(\chi_0, \Gamma \Rightarrow_s \Delta, \chi_1\)}
	\RightLabel{\(\to\)R}
	\UnaryInfC{\(\Gamma \Rightarrow_s \Delta, \chi_0 \to \chi_1\)}
	\DisplayProof
	\hspace{0.3cm}
	\AxiomC{\(\tau_0\)}
	\noLine
	\UnaryInfC{\(\Gamma \Rightarrow_s  \Delta, \chi_0\)}
	\AxiomC{\(\tau_1\)}
	\noLine
	\UnaryInfC{\(\chi_1, \Gamma\Rightarrow_s \Delta\)}
	\RightLabel{\(\to\)L}
	\BinaryInfC{\(\chi_0 \to \chi_1, \Gamma \Rightarrow_s \Delta\)}
	\DisplayProof
\]
\[
	\rotatebox[origin=c]{270}{$\longmapsto$}
\]
\[ 
	\AxiomC{\(\textsf{wk}(\tau_0)\)}
	\noLine
	\UnaryInfC{\(\Gamma \Rightarrow_s \Delta, \chi_1, \chi_0\)}
	\AxiomC{\(\pi_0\)}
	\noLine
	\UnaryInfC{\(\chi_0, \Gamma \Rightarrow_s \Delta, \chi_1\)}
	\RightLabel{\(\textsf{cut}_1\)}
	\BinaryInfC{\(\Gamma \Rightarrow_s \Delta, \chi_1\)}
	\AxiomC{\(\tau_1\)}
	\noLine
	\UnaryInfC{\(\chi_1, \Gamma\Rightarrow_s \Delta\)}
	\RightLabel{\(\textsf{cut}_2\)}
	\BinaryInfC{\(\Gamma \Rightarrow_s \Delta\)}
	\DisplayProof
\]

\subsection{Commutative implication cases}\label{sec:cut-reduction-com-impl}
Assume the cut formula is not principal in \(\pi\) or \(\tau\) and the principal
formula where the cut formula is not principal is an implication.
The cut reduction has 4 possible shapes:

\[ 
\AxiomC{\(\pi_0\)}
\noLine
\UnaryInfC{\(\Gamma, \phi \Rightarrow_s \psi, \Delta, \chi\)}
\RightLabel{\(\to\)R}
\UnaryInfC{\(\Gamma \Rightarrow_s \phi \to \psi, \Delta, \chi\)}
\DisplayProof
\hspace{0.3cm}
\AxiomC{\(\tau\)}
\noLine
\UnaryInfC{\(\chi, \Gamma \Rightarrow_s \phi \to \psi, \Delta\)}
\DisplayProof
\]
\[
	\rotatebox[origin=c]{270}{$\longmapsto$}
\]
\[ 
\AxiomC{\(\pi_0\)}
\noLine
\UnaryInfC{\(\Gamma, \phi \Rightarrow_s \psi, \Delta, \chi\)}
\AxiomC{\(\text{rinv}_{\phi \to \psi}(\tau)\)}
\noLine
\UnaryInfC{\(\chi, \Gamma, \phi \Rightarrow_s \psi, \Delta\)}
\RightLabel{\(\textsf{cut}_1\)}
\BinaryInfC{\(\Gamma, \phi \Rightarrow_s \psi, \Delta\)}
\RightLabel{\(\to R\)}
\UnaryInfC{\(\Gamma \Rightarrow_s \phi \to \psi, \Delta\)}
\DisplayProof
\]

\[ 
\AxiomC{\(\pi_0\)}
\noLine
\UnaryInfC{\(\Gamma \Rightarrow_s \Delta, \phi, \chi\)}
\AxiomC{\(\pi_1\)}
\noLine
\UnaryInfC{\(\Gamma, \psi \Rightarrow_s \Delta, \chi\)}
\RightLabel{\(\to\)L}
\BinaryInfC{\(\Gamma, \phi \to \psi \Rightarrow_s \Delta, \chi\)}
\DisplayProof
\hspace{0.3cm}
\AxiomC{\(\tau\)}
\noLine
\UnaryInfC{\(\chi, \Gamma, \phi \to \psi \Rightarrow_s \Delta\)}
\DisplayProof
\]
\[
	\rotatebox[origin=c]{270}{$\longmapsto$}
\]
\[ 
	\AxiomC{\(\pi_0\)}
	\noLine
	\UnaryInfC{\(\Gamma \Rightarrow_s \Delta, \phi, \chi\)}
	\AxiomC{\(\textsf{inv}^0_{\phi \to \psi}(\tau)\)}
	\noLine
	\UnaryInfC{\(\chi, \Gamma \Rightarrow_s \Delta, \phi\)}
	\RightLabel{\(\textsf{cut}_1\)}
	\BinaryInfC{\(\Gamma \Rightarrow_s \Delta, \phi\)}
	\AxiomC{\(\pi_1\)}
	\noLine
	\UnaryInfC{\(\Gamma, \psi \Rightarrow_s \Delta, \chi\)}
	\AxiomC{\(\textsf{inv}^1_{\phi \to \psi}(\tau)\)}
	\noLine
	\UnaryInfC{\(\chi, \Gamma, \psi \Rightarrow_s \Delta\)}
	\RightLabel{\(\textsf{cut}_2\)}
	\BinaryInfC{\(\Gamma, \psi \Rightarrow_s \Delta\)}
	\RightLabel{\(\to\)L}
	\BinaryInfC{\(\Gamma, \phi \to \psi \Rightarrow_s \Delta\)}
	\DisplayProof
\]

\[ 
\AxiomC{\(\pi\)}
\noLine
\UnaryInfC{\(\Gamma \Rightarrow_s \phi \to \psi, \Delta, \chi\)}
\DisplayProof
\hspace{0.3cm}
\AxiomC{\(\tau_0\)}
\noLine
\UnaryInfC{\(\chi, \Gamma, \phi \Rightarrow_s \psi, \Delta\)}
\RightLabel{\(\to\)R}
\UnaryInfC{\(\chi, \Gamma \Rightarrow_s \phi \to \psi, \Delta\)}
\DisplayProof
\]
\[
	\rotatebox[origin=c]{270}{$\longmapsto$}
\]
\[ 
\AxiomC{\(\text{rinv}_{\phi \to \psi}(\pi)\)}
\noLine
\UnaryInfC{\(\Gamma, \phi \Rightarrow_s \psi, \Delta, \chi\)}
\AxiomC{\(\tau_0\)}
\noLine
\UnaryInfC{\(\chi, \Gamma, \phi \Rightarrow_s \psi, \Delta\)}
\RightLabel{\(\textsf{cut}_1\)}
\BinaryInfC{\(\Gamma, \phi \Rightarrow_s \psi, \Delta\)}
\RightLabel{\(\to R\)}
\UnaryInfC{\(\Gamma \Rightarrow_s \phi \to \psi, \Delta\)}
\DisplayProof
\]

\[ 
\AxiomC{\(\pi\)}
\noLine
\UnaryInfC{\(\Gamma, \phi \to \psi \Rightarrow_s \Delta, \chi\)}
\DisplayProof
\hspace{0.3cm}
\AxiomC{\(\tau_0\)}
\noLine
\UnaryInfC{\(\chi, \Gamma \Rightarrow_s \Delta, \phi\)}
\AxiomC{\(\tau_1\)}
\noLine
\UnaryInfC{\(\chi, \Gamma, \psi \Rightarrow_s \Delta\)}
\RightLabel{\(\to\)L}
\BinaryInfC{\(\chi, \Gamma, \phi \to \psi \Rightarrow_s \Delta\)}
\DisplayProof
\]
\[
	\rotatebox[origin=c]{270}{$\longmapsto$}
\]
\[ 
	\AxiomC{\(\textsf{inv}^0_{\phi \to \psi}(\pi)\)}
	\noLine
	\UnaryInfC{\(\Gamma \Rightarrow_s \Delta, \phi, \chi\)}
	\AxiomC{\(\tau_0\)}
	\noLine
	\UnaryInfC{\(\chi, \Gamma \Rightarrow_s \Delta, \phi\)}
	\RightLabel{\(\textsf{cut}_1\)}
	\BinaryInfC{\(\Gamma \Rightarrow_s \Delta, \phi\)}
	\AxiomC{\(\textsf{inv}^1_{\phi \to \psi}(\pi)\)}
	\noLine
	\UnaryInfC{\(\Gamma, \psi \Rightarrow_s \Delta, \chi\)}
	\AxiomC{\(\tau_1\)}
	\noLine
	\UnaryInfC{\(\chi, \Gamma, \psi \Rightarrow_s \Delta\)}
	\RightLabel{\(\textsf{cut}_2\)}
	\BinaryInfC{\(\Gamma, \psi \Rightarrow_s \Delta\)}
	\RightLabel{\(\to\)L}
	\BinaryInfC{\(\Gamma, \phi \to \psi \Rightarrow_s \Delta\)}
	\DisplayProof  
\]

\subsection{Modal cases}
Assume the cut formula is not principal in \(\pi\) or \(\tau\) and
where the cut formula is not principal, the principal formula is of shape
\(\nec \phi\) or \(\necm \phi\).
So where the cut formula is not principal the last rule is either
\(\nec, \necm_f\) or \(\necm_u\).

\subsubsection{Weakening}\label{sec:cut-reduction-weakening}
If the cut formula belongs to the weakening part where the cut-formula is not principal, then it is trivial to eliminate the cut: just keep that proof changing the weakening part to not include the cut-formula.

\subsubsection{Commutative}
We can assume that the cut formula is not principal with last rule either \(\nec, \necm_f\) or \(\necm_u\) occurs at \(\tau\) since if it occurs at \(\pi\) it would be in the weakening part.
In addition, to not occur at the weakening part of the LHS of \(\tau\) the cut formula must be of shape \(\nec \chi_0\) or \(\necm \chi_0\).
In \(\pi\) we have two options, either the cut-formula is principal or not.

If it were not principal, then the principal formula of \(\pi\) is of shape \(\phi \to \psi\), \(\nec \phi\) or \(\necm \phi\).
The first case was already covered in Subsection~\ref{sec:cut-reduction-com-impl} and the other two will provoke that the cut formula belongs to the weakening part of a modal rule, which we covered in Subsubsection~\ref{sec:cut-reduction-weakening}.
So we can safely assume that \(\chi\) is principal in \(\pi\) and in addition, since if \(\chi = \necm \chi_0\) then \(s \neq \chi_0\) (by Subsection~\ref{sec:cut-reduction-label}), we know that the last rule applied at \(\pi\) is either \(\nec\) or \(\necm_u\).
This leaves six cases depending on the last rule of \(\pi\) and the last rule of \(\tau\).

In all the subsequent cases we are going to have \(\Sigma_0, \Sigma_1, \Gamma_0, \Gamma_1, \Pi_0, \Pi_1\) such that
\[\Sigma_0, \nec \Gamma_0, \necm \Pi_0 = \Sigma_1, \nec \Gamma_1, \necm \Pi_1.\]
Then we define \(\Gamma_2 = \Gamma_0, (\Gamma_1 \setminus \Gamma_0) = \Gamma_1, (\Gamma_0 \setminus \Gamma_1)\),  \(\Pi_2 = \Pi_0, (\Pi_1 \setminus \Pi_0) = \Pi_1, (\Pi_0 \setminus \Pi_1)\) and \(\Sigma_2\) as the only multiset such that \(\Sigma_2, \nec \Gamma_2, \necm \Pi_2 = \Sigma_0, \nec \Gamma_0, \necm \Pi_0 = \Sigma_1, \nec \Gamma_1, \necm \Pi_1\).
Notice that \(\Gamma_0, \Gamma_1 \subseteq \Gamma_2\), \(\Pi_0, \Pi_1 \subseteq \Pi_2\).

\begin{landscape}
\(\nec\)-\(\nec\) reduction.
\[ 
\AxiomC{}
\UnaryInfC{\(\pi_0 \vdash \Gamma_0, \dnecm \Pi_0 \Rightarrow_\circ \chi_0\)}
\RightLabel{\(\nec\)}
\UnaryInfC{\(\Sigma_0, \nec \Gamma_0, \necm \Pi_0 \Rightarrow_s \nec \phi, \Delta, \nec \chi_0\)}
\DisplayProof
\hspace{0.5cm}
\AxiomC{}
\UnaryInfC{\(\tau_0 \vdash \chi_0, \Gamma_1, \dnecm \Pi_1 \Rightarrow_\circ \phi\)}
\RightLabel{\(\nec\)}
\UnaryInfC{\(\nec \chi_0, \Sigma_1, \nec \Gamma_1, \necm \Pi_1 \Rightarrow_s \nec \phi, \Delta\)}
\DisplayProof
\]
\[
	\rotatebox[origin=c]{270}{$\longmapsto$}
\]
\[ 
	\AxiomC{}
	\UnaryInfC{\(\textsf{cut}_1(\textsf{wk}(\pi_0), \textsf{wk}(\tau_0)) \vdash \Gamma_2, \dnecm \Pi_2 \Rightarrow_\circ \phi\)}
	\RightLabel{\(\nec\)}
	\UnaryInfC{\(\Sigma_2, \nec \Gamma_2, \necm \Pi_2 \Rightarrow_s \nec \phi, \Delta\)}
	\DisplayProof
\]

\(\nec\)-\(\necm_u\) reduction.
\[ 
\AxiomC{}
\UnaryInfC{\(\pi_0 \vdash \Gamma_0, \dnecm \Pi_0 \Rightarrow_\circ \chi_0\)}
\RightLabel{\(\nec\)}
\UnaryInfC{\(\Sigma_0, \nec \Gamma_0, \necm \Pi_0 \Rightarrow_s \necm \phi, \Delta, \nec \chi_0\)}
\DisplayProof
\hspace{0.5cm}
\AxiomC{}
\UnaryInfC{\(\tau_0 \vdash \chi_0, \Gamma_1, \dnecm \Pi_1 \Rightarrow_\circ \phi\)}
\AxiomC{}
\UnaryInfC{\(\tau_1 \vdash \chi_0, \Gamma_1, \dnecm \Pi_1 \Rightarrow_\phi \necm\phi\)}
\RightLabel{\(\necm_u\)}
\BinaryInfC{\(\nec \chi_0, \Sigma_1, \nec \Gamma_1, \necm \Pi_1 \Rightarrow_s \necm \phi, \Delta\)}
\DisplayProof
\]
\[
	\rotatebox[origin=c]{270}{$\longmapsto$}
\]
\[ 
	\AxiomC{}
	\UnaryInfC{\(\textsf{cut}_1(\textsf{wk}(\pi_0), \textsf{wk}(\tau_0)) \vdash \Gamma_2, \dnecm \Pi_2 \Rightarrow_\circ \phi\)}
	\AxiomC{}
	\UnaryInfC{\(\textsf{cut}_2({\textsf{wk}(\pi_0)}^\phi, \textsf{wk}(\tau_1)) \vdash \Gamma_2, \dnecm \Pi_2 \Rightarrow_\circ \necm \phi\)}
	\RightLabel{\(\necm_u\)}
	\BinaryInfC{\(\Sigma_2, \nec \Gamma_2, \necm \Pi_2 \Rightarrow_s \necm \phi, \Delta\)}
	\DisplayProof
\]

\(\nec\)-\(\necm_f\) reduction.
\[ 
\AxiomC{}
\UnaryInfC{\(\pi_0 \vdash \Gamma_0, \dnecm \Pi_0 \Rightarrow_\circ \chi_0\)}
\RightLabel{\(\nec\)}
\UnaryInfC{\(\Sigma_0, \nec \Gamma_0, \necm \Pi_0 \Rightarrow_s \necm \phi, \Delta, \nec \chi_0\)}
\DisplayProof
\hspace{0.5cm}
\AxiomC{}
\UnaryInfC{\(\tau_0 \vdash \chi_0, \Gamma_1, \dnecm \Pi_1 \Rightarrow_\circ \phi\)}
\AxiomC{\(\tau_1\)}
\noLine
\UnaryInfC{\(\chi_0, \Gamma_1, \dnecm \Pi_1 \Rightarrow_\phi \necm\phi\)}
\RightLabel{\(\necm_u\)}
\BinaryInfC{\(\nec \chi_0, \Sigma_1, \nec \Gamma_1, \necm \Pi_1 \Rightarrow_s \necm \phi, \Delta\)}
\DisplayProof
\]
\[
	\rotatebox[origin=c]{270}{$\longmapsto$}
\]
\[ 
	\AxiomC{}
	\UnaryInfC{\(\textsf{cut}_1(\textsf{wk}(\pi_0), \textsf{wk}(\tau_0)) \vdash \Gamma_2, \dnecm \Pi_2 \Rightarrow_\circ \phi\)}
	\AxiomC{\({\textsf{wk}(\pi_0)}^\phi\)}
	\noLine
	\UnaryInfC{\(\Gamma_2, \dnecm \Pi_2 \Rightarrow_\phi \necm \phi, \chi_0\)}
	\AxiomC{\(\textsf{wk}(\tau_1)\)}
	\noLine
	\UnaryInfC{\(\chi_0, \Gamma_2, \dnecm \Pi_2 \Rightarrow_\phi \necm \phi\)}
	\RightLabel{\(\textsf{cut}_2\)}
	\BinaryInfC{\(\Gamma_2, \dnecm \Pi_2 \Rightarrow_\phi \necm \phi\)}
	\RightLabel{\(\necm_f\)}
	\BinaryInfC{\(\Sigma_2, \nec \Gamma_2, \necm \Pi_2 \Rightarrow_s \necm \phi, \Delta\)}
	\DisplayProof
\]

\newpage
\(\necm_u\)-\(\nec\) reduction.
\[ 
\AxiomC{}
\UnaryInfC{\(\pi_0 \vdash \Gamma_0, \dnecm \Pi_0 \Rightarrow_\circ \chi_0\)}
\AxiomC{}
\UnaryInfC{\(\pi_1 \vdash \Gamma_0, \dnecm \Pi_0 \Rightarrow_\phi \necm\chi_0\)}
\RightLabel{\(\necm_u\)}
\BinaryInfC{\(\Sigma_0, \nec \Gamma_0, \necm \Pi_0 \Rightarrow_s \nec \phi, \Delta, \necm \chi_0\)}
\DisplayProof
\hspace{0.3cm}
	\AxiomC{}
	\UnaryInfC{\(\tau_0 \vdash \dnecm \chi_0, \Gamma_1, \dnecm \Pi_1 \Rightarrow_\circ \phi\)}
	\RightLabel{\(\nec\)}
	\UnaryInfC{\(\necm \chi_0, \Sigma_1, \nec \Gamma_1, \necm \Pi_1 \Rightarrow_s \nec \phi, \Delta\)}
	\DisplayProof
\]
\[
	\rotatebox[origin=c]{270}{$\longmapsto$}
\]
\[ 
\AxiomC{}
\UnaryInfC{\(\rho_0 \vdash \Gamma_2, \dnecm \Pi_2 \Rightarrow_\circ \phi\)}
\RightLabel{\(\nec\)}
\UnaryInfC{\(\nec \Gamma_2, \necm \Pi_2 \Rightarrow_\circ \nec \phi, \Delta\)}
\DisplayProof
\]
where \(\rho_0 = \textsf{cut}_2(\textsf{wk}(\pi_0),\textsf{cut}_1({\textsf{wk}(\pi_1)}^{\circ}, \textsf{wk}(\tau_0)))\).

\(\necm_u\)-\(\necm_u\) reduction.
{
\footnotesize
\[ 
\AxiomC{}
\UnaryInfC{\(\pi_0 \vdash \Gamma_0, \dnecm \Pi_0 \Rightarrow_\circ \chi_0\)}
\AxiomC{}
\UnaryInfC{\(\pi_1 \vdash \Gamma_0, \dnecm \Pi_0 \Rightarrow_{\chi_0} \necm\chi_0\)}
\RightLabel{\(\necm_u\)}
\BinaryInfC{\(\Sigma_0, \nec \Gamma_0, \necm \Pi_0 \Rightarrow_s \necm \phi, \Delta, \necm \chi_0\)}
\DisplayProof
\hspace{0.3cm}
	\AxiomC{}
	\UnaryInfC{\(\tau_0 \vdash \dnecm \chi_0, \Gamma_1, \dnecm \Pi_1 \Rightarrow_\circ \phi\)}
	\AxiomC{}
	\UnaryInfC{\(\tau_1 \vdash \dnecm \chi_0, \Gamma_1, \dnecm \Pi_1 \Rightarrow_\phi \necm \phi\)}
	\RightLabel{\(\necm_u\)}
	\BinaryInfC{\(\necm \chi_0, \Sigma_1, \nec \Gamma_1, \necm \Pi_1 \Rightarrow_s \necm \phi, \Delta\)}
	\DisplayProof
\]}
\[
	\rotatebox[origin=c]{270}{$\longmapsto$}
\]
\[ 
	\AxiomC{}
	\UnaryInfC{\(\rho_0 \vdash \Gamma_2, \dnecm \Pi_2 \Rightarrow_\circ \phi\)}
	\AxiomC{}
	\UnaryInfC{\(\rho_1 \vdash \Gamma_2, \dnecm \Pi_2 \Rightarrow_\phi \necm\phi\)}
	\RightLabel{\(\necm_u\)}
	\BinaryInfC{\(\Sigma_2, \nec \Gamma_2, \necm \Pi_2 \Rightarrow_s \necm \phi\)}
	\DisplayProof
\]
where \(\rho_0 = \textsf{cut}_2(\textsf{wk}(\pi_0),\textsf{cut}_1({\textsf{wk}(\pi_1)}^{\circ}, \textsf{wk}(\tau_0)))\) and \(\rho_1 = \textsf{cut}_4({\textsf{wk}(\pi_0)}^\phi,\textsf{cut}_3({\textsf{wk}(\pi_1)}^{\phi}, \textsf{wk}(\tau_1)))\)

\newpage
\(\necm_u\)-\(\necm_f\) reduction.
{
\footnotesize
\[ 
\AxiomC{}
\UnaryInfC{\(\pi_0 \vdash \Gamma_0, \dnecm \Pi_0 \Rightarrow_\circ \chi_0\)}
\AxiomC{}
\UnaryInfC{\(\pi_1 \vdash \Gamma_0, \dnecm \Pi_0 \Rightarrow_{\chi_0} \necm\chi_0\)}
\RightLabel{\(\necm_u\)}
\BinaryInfC{\(\Sigma_0, \nec \Gamma_0, \necm \Pi_0 \Rightarrow_s \necm \phi, \Delta, \necm \chi_0\)}
\DisplayProof
\hspace{0.3cm}
	\AxiomC{}
	\UnaryInfC{\(\tau_0 \vdash \dnecm \chi_0, \Gamma_1, \dnecm \Pi_1 \Rightarrow_\circ \phi\)}
	\AxiomC{\(\tau_1\)}
	\noLine
	\UnaryInfC{\(\dnecm \chi_0, \Gamma_1, \dnecm \Pi_1 \Rightarrow_\phi \necm \phi\)}
	\RightLabel{\(\necm_f\)}
	\BinaryInfC{\(\necm \chi_0, \Sigma_1, \nec \Gamma_1, \necm \Pi_1 \Rightarrow_s \necm \phi, \Delta\)}
	\DisplayProof
\]}
\[
	\rotatebox[origin=c]{270}{$\longmapsto$}
\]
{\footnotesize
\[ 
	\AxiomC{}
	\UnaryInfC{\(\rho_0 \vdash \Gamma_2, \dnecm \Pi_2 \Rightarrow_\circ \phi\)}
	\AxiomC{\({\textsf{wk}(\pi_0)}^\phi\)}
	\noLine
	\UnaryInfC{\(\Gamma_2, \dnecm \Pi_2 \Rightarrow_\phi \necm \phi, \chi_0\)}
	\AxiomC{\({\textsf{wk}(\pi_1)}^{\phi}\)}
	\noLine
	\UnaryInfC{\(\chi_0, \Gamma_2, \dnecm \Pi_2 \Rightarrow_\phi \necm \phi, \necm \chi_0\)}
	\AxiomC{\(\textsf{wk}(\tau_1)\)}
	\noLine
	\UnaryInfC{\(\dnecm \chi_0, \Gamma_2, \dnecm \Pi_2 \Rightarrow_\phi \necm \phi\)}
	\RightLabel{\(\textsf{cut}_3\)}
	\BinaryInfC{\(\chi_0, \Gamma_2, \dnecm \Pi_2 \Rightarrow_\phi \necm \phi\)}
	\RightLabel{\(\textsf{cut}_4\)}
	\BinaryInfC{\(\Gamma_2, \dnecm \Pi_2 \Rightarrow_\phi \necm \phi\)}
	\RightLabel{\(\necm_f\)}
	\BinaryInfC{\(\Sigma_2, \nec \Gamma_2, \necm \Pi_2 \Rightarrow_s \necm \phi\)}
	\DisplayProof
\]
}
where \(\rho_0 = \textsf{cut}_2(\textsf{wk}(\pi_0),\textsf{cut}_1({\textsf{wk}(\pi_1)}^{\circ}, \textsf{wk}(\tau_0)))\).
\end{landscape}

\printbibliography

\end{document}